\documentclass[11pt]{amsart}
\usepackage[latin1]{inputenc}\usepackage{amsfonts}
\usepackage{amsmath}
\usepackage{amssymb}
\usepackage{amsthm}
\usepackage[T1]{fontenc}
\usepackage{mathrsfs}
\usepackage[dvips]{hyperref}
\usepackage{enumerate}
\usepackage{amssymb}
\usepackage{amsmath}
\usepackage{pstricks}

\usepackage{graphicx}
\newtheorem{lemma}{Lemma}[section]

\newtheorem{theoreme}{Theorem}[section]
\newtheorem{prop}{Proposition}[section]
\newtheorem{coro}{Corollary}[section]
\newtheorem{conjecture}{Conjecture}[section]
\newtheorem{definition}{Definition}[section]
\theoremstyle{definition}
\newtheorem{rem}{Remark}[section]
\newtheorem{ex}{Example}[section]
\usepackage{graphicx}

\def\F{\mathbb{F}}
\def\N{\mathbb{N}}

\def\a{\textbf{a}}
\def\b{\textbf{b}}
\def\p{\textbf{$\mathfrak{p}_q$}}
\def\s{\textbf{$\mathfrak{p}_2$}}

\begin{document}
%\chead{Alina Firicel} 

%%b
\title[Subword complexity and Laurent series] 
{Subword complexity and Laurent series with coefficients in a finite field }

%\thanks{This paper will represent a  portion of the author's ``m�moire de th�se'' at the Universit� Claude Bernard Lyon, thesis advisor Boris Adamczewski.}

\author{Alina Firicel}

\address{Universit\'{e} de Lyon\\
Universit\'{e} Lyon 1 \\
 Institut Camille Jordan \\
  UMR 5208 du CNRS \\
   43, boulevard du 11 novembre 1918 \\ 
   F-69622 Villeurbanne Cedex, France}
   
\email{firicel@math.univ-lyon1.fr}

\maketitle 

\begin{abstract} 
Decimal expansions of classical constants such as $\sqrt2$, $\pi$ and $\zeta(3)$ have long been a source of difficult questions. In the case of  Laurent series with coefficients in
a finite field, where no carry-over difficulties appear, the
situation seems to be simplified and drastically different. On the other hand, Carlitz introduced  analogs of real numbers such as $\pi$, $e$ or
$\zeta(3)$. Hence, it became reasonable to enquire how ``complex''
the Laurent representation of these ``numbers'' is. 

In this paper we prove that the inverse of Carlitz's analog of $\pi$, $\Pi_q$, has in general a linear  complexity, except  in the case $q=2$, when the complexity is quadratic. In particular, this implies the transcendence of $\Pi_2$ over $\F_2(T)$. In the second part, we consider the classes of Laurent series of at most polynomial complexity and of zero entropy. We show that these satisfy some nice closure properties.  \end{abstract}

\section{Introduction and motivations}

A long standing open question  concerns the digits of the real number $\pi=3.14159\cdots$. The decimal expansion of $\pi$ has been calculated to billions of digits and unfortunately, there are no evident patterns  occurring. Actually, for any $b\geq 2$, the $b$-ary expansion of $\pi$ looks like a random sequence (see for instance \cite{BBP}). 
%%b 
More concretely, it is widely believed that $\pi$ is normal, meaning that all  blocks of digits of equal length occur in the $b$-ary  representation of $\pi$ with the same frequency, but current knowledge on this point is scarce. 

 A usual way to describe the disorder of an infinite sequence $\a=a_0a_1a_2\cdots$ is to compute its subword complexity, which is the function that associates to each positive integer $m$ the number $p(\a,m)$  of distinct blocks of
length $m$ occurring in the word $\a$. Let  $\alpha$ be a real number and let $\a$ be the representation of $\alpha$ in an integral base $b\geq 2$. The complexity function of $\alpha$ is defined as follows:
$$p(\alpha, b, m)=p(\a,m),$$
for any positive integer $m$.

Notice that  $\pi$ being normal would imply that its complexity  must be maximal, that is $p(\pi, b, m)=b^m$. In this direction, similar questions have been asked  about other well-known constants like $e$, $\log 2$, $\zeta(3)$   or $\sqrt 2$ and it is widely believed that the following conjecture is true.
\begin{conjecture}\label{maximal}  Let $\alpha$ be one of the classical constants: $\pi$, $e$, $\log 2$, $\zeta(3)$   and  $\sqrt 2$. The complexity of the  real number $\alpha$ satisfies:
$$p(\alpha, b,m)=b^m,$$
for every positive integer $m$ and every  base $b\geq 2$.
\end{conjecture}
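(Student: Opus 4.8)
The plan is to deduce the full-complexity statement $p(\alpha,b,m)=b^m$ from the (stronger) normality of $\alpha$ in base $b$: if every block of length $m$ occurs with the expected frequency $b^{-m}$, then in particular every such block occurs at least once, so all $b^m$ words of length $m$ are subwords and the complexity is maximal. Thus it would suffice to establish that each of $\pi,e,\log 2,\zeta(3),\sqrt2$ is normal, or more modestly that its digit sequence omits no finite block, in every integer base $b\ge 2$.

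First I would reformulate the question dynamically. The base-$b$ expansion of $\alpha$ is the itinerary of the point $\{\alpha\}$ under the $b$-adic map $T_b:x\mapsto bx \bmod 1$ relative to the partition into the intervals $[j/b,(j+1)/b)$. A block $w=w_1\cdots w_m$ occurs in the expansion precisely when the orbit of $\{\alpha\}$ enters the corresponding cylinder, and normality is exactly the statement that this single orbit is equidistributed for Lebesgue measure. I would therefore attempt an equidistribution result for $(T_b^n\{\alpha\})_n$, feeding in whatever arithmetic structure the constant provides: the BBP-type series in base $16$ for $\pi$, the explicit regular continued fraction of $e$, the Diophantine properties of the quadratic irrational $\sqrt2$, and the known irrationality measures for $\zeta(3)$ and $\log 2$.

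As intermediate, much weaker milestones I would aim to (i) exclude eventual periodicity, already guaranteed by irrationality together with the Morse--Hedlund theorem, which yields only $p(\alpha,b,m)\ge m+1$; (ii) push to a super-linear lower bound via the transcendence--complexity dichotomy showing that an algebraic irrational cannot have linear complexity; and (iii) attempt a forbidden-word argument proving that a single omitted block of length $m$ would force a strong self-similar structure on the expansion, incompatible with the available transcendence or irrationality-measure estimates.

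The hard part is that none of these ingredients comes anywhere near the target. The conjecture subsumes the normality of $\pi$, $e$ and $\zeta(3)$ in every base, which is among the most notorious open problems of the area: at present there is no method capable of controlling the distribution of the digits of these constants, and even the far weaker assertion that no single digit is ultimately omitted is unknown. Consequently I expect the genuine obstacle to be the complete absence of arithmetic input strong enough to convert transcendence or Diophantine-approximation data into a lower bound on the number of distinct length-$m$ blocks that grows like $b^m$; this is precisely why the assertion is recorded here as a conjecture rather than a theorem.
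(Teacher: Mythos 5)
The statement you were asked to prove is not a theorem of the paper at all: it is Conjecture~1.1, which the paper records as a widely believed open problem and makes no attempt to prove. So there is no proof in the paper to compare yours against, and indeed your proposal does not (and could not) close the question; what it does instead is correctly diagnose the situation. Your opening reduction --- normality in base $b$ implies every block of every length occurs, hence $p(\alpha,b,m)=b^m$ --- is exactly the paper's own motivating remark (``$\pi$ being normal would imply that its complexity must be maximal''), and it is worth noting that maximal complexity is strictly weaker than normality, since it asks only that every block occur, not that all blocks of equal length occur with equal frequency. Your intermediate milestones are also accurately calibrated: irrationality plus Morse--Hedlund gives $p(\alpha,b,m)\ge m+1$, and the Adamczewski--Bugeaud theorem cited in the paper gives the superlinear bound $p(\alpha,b,m)/m\to\infty$ for the algebraic case ($\sqrt2$), which is the strongest unconditional statement available for any of the listed constants. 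Where your sketch would fail as a proof is precisely where you say it fails: neither the dynamical reformulation via $x\mapsto bx \bmod 1$, nor BBP formulas, nor the continued fraction of $e$, nor irrationality measures for $\zeta(3)$ and $\log 2$ are known to yield equidistribution, or even the occurrence of every finite block, for a single one of these constants in a single base. In short, your submission is an honest assessment of an open conjecture rather than a proof, which is the correct outcome here; the only caveat is that it should not be presented as a ``proof attempt'' with missing steps, but explicitly as what it is --- a reduction to normality together with an explanation of why that reduction cannot currently be completed.
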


\medskip

We mention that in all this paper we will use Landau's notations.  We write $f(m) = \Theta(g(m))$ if there exist positive real numbers $k_1,k_2,n_0 $  such that, for every $n>n_0$ we have $$k_1\left|g(n)\right|< \vert f(n) \vert <k_2\left|g(n)\right|.$$
%%b 
We write also $f(m) = O(g(m))$  if  there exist two positive real numbers $k, n_0$  such that, for every $n\geq n_0$ we have: $$\left|f(n)\right| <k \left|g(n)\right|.$$ 

\medskip
If $\alpha$ is a rational real number  then $p(\alpha,b, m) =O(1)$, for every integer $b\geq 2$. Moreover, there is a classical theorem of Morse and Hedlund \cite{Morse_Hedlund} which states that an infinite sequence $\a=(a_n)_{n\geq 0}$ is eventually periodic if and only if   $p(\a,m)$ is bounded. If not, the complexity function  is strictly increasing. In particular,
\begin{equation} \label{morse_hed}p(\a,m) \geq m+1,\end{equation}
for every nonnegative integer $m$.

A  sequence  which saturates the inequality above is called a Sturmian sequence (see the original papers of Morse and Hedlund \cite{Morse_Hedlund, Morse_Hedlund1}). 

According to this theorem, an irrational real number $\alpha$ has a complexity function which satisfies $ p(\alpha, b, m) \geq m+1$, for every $m\in \N$.  
Concerning irrational algebraic numbers,  the main result known to date in this direction is due to Adamczewski and Bugeaud  \cite{Adamczewski_Bugeaud}. 
%%b
These authors proved that  the complexity of  an irrational algebraic real number  $\alpha$ satisfies 
$$ \lim_{m \rightarrow \infty} \frac{p(\alpha, b, m)}{m}=+\infty, $$  for any base $b \geq 2$.

For more details about  complexity of  algebraic real numbers, see  \cite{Adamczewski_Bugeaud, Adamczewski_Bugeaud_luca}. 
%%b 
For classical transcendental  constants, there is a more ambiguous situation and, to the best of our knowledge, the only result that improves  the bound following from Inequality (\ref{morse_hed})  was  recently proved in \cite{adamczewski}. It concerns the real number $e$ and some other exponential periods. More precisely, Adamczewski showed that if $\xi$ is an irrational real number whose irrational exponent 
$\mu(\xi)=2$, then 
$$ \lim_{m \rightarrow \infty} p(\xi, b, m)-m=+\infty,$$
for any base $b\geq 2$.

\bigskip

The present paper is motivated by this type of questions, but asked  for  Laurent  series with coefficients in a finite field. In the sequel we will denote respectively by $\F_q(T)$, $\F_q[[T^{-1}]]$ and $\F_q((T^{-1}))$ the field of rational functions, the ring of formal series and the field of Laurent series over the finite field 
%%b 
$\F_q$, $q$ being a power of a prime number $p$.

\medskip

 Let us  also recall the well-known analogy between integers, rationals and real numbers on one side, and polynomials, rationals functions, and Laurent series with coefficients in a finite field, on the other side. 
%%b 
 Notice that, the coefficients in $\F_q$ play the role of ``digits'' in the basis given by the powers of the indeterminate $T$. There is still a main  difference: in the case of real numbers, it is hard to control carry-overs when we add or multiply whereas in the case of power series over a finite field, this difficulty disappear.

\medskip
 By analogy with the real numbers, the complexity of a Laurent series is defined as the subword complexity of its sequence of coefficients. Again, the theorem of Morse and Hedlund gives a complete description of the rational Laurent series; more precisely, they are the Laurent  series of bounded complexity. Hence,  most interesting questions concern  irrational  series.

There is a remarkable theorem of Christol \cite{Christol} which describes precisely the algebraic Laurent series over $\F_q(T)$ as follows. 
 Let $f(T)=\sum_{n\geq -n_0} {a_nT^{-n}}$ be a Laurent series with coefficients in   $\mathbb{F}_q$. Then $f$ is algebraic 
 over  $\mathbb{F}_q(T)$ if, and only if, the sequence of coefficients $(a_n)_{n\geq 0}$ is $p$-automatic.

For more references on automatic sequences, see for example \cite{Allouche_Shallit}.  
Furthermore, Cobham proved that the subword complexity of an automatic sequence is at most linear \cite{Cobham}. Hence, an easy consequence of those two results is the following.

\begin{theoreme}\label{algebraic} Let $f\in \F_q((T^{-1}))$ algebraic over $\F_q(T)$. Then we have:
$$p(f,m)=O(m).$$
\end{theoreme}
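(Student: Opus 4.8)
The plan is to combine the two results recalled immediately above — Christol's theorem \cite{Christol} and Cobham's linear bound \cite{Cobham} — and to reduce everything to a statement about the coefficient sequence of $f$. Writing $f(T)=\sum_{n\geq -n_0} a_n T^{-n}$ and letting $\a=(a_n)_{n\geq 0}$ denote its sequence of coefficients, the definition of the complexity of a Laurent series gives $p(f,m)=p(\a,m)$ for every $m$. Thus it suffices to bound the subword complexity of the infinite word $\a$ over the finite alphabet $\F_q$.

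First I would invoke Christol's theorem: since $f$ is algebraic over $\F_q(T)$, the sequence $\a$ is $p$-automatic. This is the crucial input, and it is exactly the characterization quoted above, so I would simply cite it. Next I would apply Cobham's theorem, which asserts that the subword complexity of any $p$-automatic sequence is at most linear. Applied to $\a$, this yields $p(\a,m)=O(m)$, and combining this with $p(f,m)=p(\a,m)$ gives $p(f,m)=O(m)$, as claimed.

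The only point requiring a word of care — and the closest thing to an obstacle — is the bookkeeping around the finitely many negative-index coefficients $a_{-n_0},\dots,a_{-1}$ coming from the polynomial part of $f$. Christol's theorem is phrased for the one-sided sequence $(a_n)_{n\geq 0}$, whereas the Laurent series also carries these finitely many additional coefficients. Since prepending a fixed finite prefix to an infinite word changes its subword complexity by at most a constant independent of $m$, this modifies the $O(m)$ estimate only by an additive constant and is harmless. Beyond this routine remark the statement is immediate: essentially all of the substance is carried by the two cited theorems, so there is no genuine difficulty to overcome here.
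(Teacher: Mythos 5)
Your proof is correct and follows exactly the paper's own route: the theorem is stated there as an immediate consequence of Christol's theorem (algebraicity over $\F_q(T)$ is equivalent to $p$-automaticity of the coefficient sequence) combined with Cobham's linear bound on the subword complexity of automatic sequences. Your remark about the negative-index coefficients is harmless but unnecessary, since the paper defines $p(f,m)$ directly as the complexity of the one-sided word $(a_n)_{n\geq 0}$, so the polynomial part never enters.
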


The reciprocal is obviously not true, since there  are uncountable many Laurent series with linear complexity. 
%%b 
In contrast with real numbers, the situation is thus clarified in the case of algebraic Laurent series.  Also, notice that Conjecture \ref{maximal} and  Theorem \ref{algebraic}  point out the fact that the situations in $\F_q((T^{-1}))$ and in $\mathbb R$ appear to be 
%%b 
completely opposite.

\medskip

On the other hand, 
%the theory of Drinfeld modules provides analogs of some classical transcendental real numbers. More precisely, in 1935, 
Carlitz introduced \cite{carlitz} functions in positive characteristic by analogy with the number $\pi$,  the Riemann $\zeta$ function, the usual exponential and  the logarithm function.  Many of these  were shown to be transcendental over $\F_q(T)$ (see \cite{ demathan_cherif, goss, thakur, wade, yu}).  In the present paper we focus on  the analog of $\pi$, denoted, for each  $q$, by $\Pi_q$,   and we prove that its inverse has a ``low'' complexity.
More precisely, we will prove in Section 3 the following results.

\begin{theoreme}\label{case2}   Let $q=2$. The complexity of the inverse of $\Pi_q$ satisfies:
 $$p\left(\frac{1}{\Pi_2}, m\right)=\Theta(m^2).$$\end{theoreme}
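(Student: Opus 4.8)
The plan is to reduce the statement to a purely combinatorial problem about the coefficient sequence, and then to read off its subword complexity from a self-similar decomposition. Throughout, $s_2$ and $v_2$ denote the binary sum-of-digits and the $2$-adic valuation.

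First I would start from Carlitz's product formula for $\Pi_q$. Specializing to $q=2$ and inverting, one obtains, up to a monomial factor $T^{-2}$ (which merely shifts the coefficient sequence and so does not affect the complexity), that $1/\Pi_2$ is the Laurent series attached to
$$\prod_{i\ge 1}\bigl(1+T^{-(2^i-1)}\bigr)\in\F_2[[T^{-1}]].$$
Writing $u=T^{-1}$ and expanding over $\F_2$, the coefficient of $u^N$ counts modulo $2$ the subsets $S\subseteq\{1,2,\dots\}$ with $\sum_{i\in S}(2^i-1)=N$. Setting $M=\sum_{i\in S}2^i$, this becomes $N=M-s_2(M)$ with $M$ even; since $M\mapsto M-s_2(M)$ is strictly increasing on the even integers, every admissible $N$ has a unique such representation. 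Hence the coefficient sequence $\a=(a_N)_{N\ge 0}$ is the $\{0,1\}$-word whose $1$'s sit exactly at the positions $2k-s_2(k)$, $k\ge 0$.

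The key structural step is to record the gaps between consecutive $1$'s. A direct computation shows the $1$'s occur in blocks $11$, consecutive blocks being separated by $0^{e_j}$ with $e_j=1+v_2(j)$; equivalently, the polynomial truncations $P_n$ of the product satisfy $P_1=11$ and $P_{n+1}=P_n\,0^{\,n}\,P_n$. In particular $\a$ contains arbitrarily long runs of $0$'s, so it is not automatic. The crucial point I would isolate is a rigidity property: if a \emph{full} gap $0^{L}$ occupies block index $j=2^{L-1}(2l+1)$, then for every $0<|i|<2^{L-1}$ one has $e_{j+i}=1+v_2(|i|)$, independently of $l$, because $v_2(|i|)\le L-2\ne L-1=v_2(j)$. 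Thus the entire neighbourhood of a maximal gap of length $L$ is a universal word depending only on $L$, and all nearby gaps are strictly shorter than $L$.

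With this in hand I would prove the two bounds separately. For the lower bound, fix $L$ with $\log_2 m\le L\le m/2$ and slide a window of length $m$ so that a gap $0^{L}$ (occurring e.g.\ at $j=2^{L-1}$) lies fully inside at a variable offset $o$; by rigidity the rest of the window is forced, distinct pairs $(L,o)$ give distinct factors, and summing the $\Theta(m-L)$ admissible offsets over the $\Theta(m)$ values of $L$ produces $\Omega(m^2)$ factors. For the upper bound, classify a factor $w$ of length $m$ by the length $L$ of its longest run of $0$'s: if $L<\log_2 m$ then all gaps in $w$ are bounded, so $w$ lies in a stretch on which the gap pattern is a fixed periodic word of period $O(m)$, giving only $O(m)$ such factors; if $L\ge\log_2 m$ then $2^{L}\ge m$, so the window lies inside the universal neighbourhood of that maximal gap and $w$ is determined by $L$ together with the offset of the gap, i.e.\ by $O(m)$ data for each of the $\le m$ values of $L$, hence $O(m^2)$ in total. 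Combining the bounds yields $p\!\left(1/\Pi_2,m\right)=\Theta(m^2)$. The main obstacle is the upper bound, precisely because the gap lengths range over an infinite alphabet, so a priori a length-$m$ window need not be describable by boundedly much data; the rigidity identity is exactly what tames this, collapsing each window to the two genuine degrees of freedom ``which maximal gap'' and ``where it sits''. The remaining delicate point is the boundary case, where the longest run of $0$'s is truncated by an endpoint of the window; there one checks that a truncated gap of length $\ge\log_2 m$ still forces the adjacent content through the same valuation identity, so these cases contribute $O(m^2)$ as well.
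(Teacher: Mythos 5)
Your argument is correct, and it reaches the paper's bounds by a genuinely different route. The paper (Section 3.1) never writes down an arithmetic formula for the positions of the $1$'s: it works entirely at the level of words, proving the self-similarity $W_n=1W_1\cdots W_{n-1}0$ (Lemma \ref{concaten}), the recursion $U_{n+1}=U_nU_n0$ (Lemma \ref{U_n}), the fact that $W_n$ ends in exactly $n$ zeros (Lemma \ref{plage}), the factorization of the whole word over the alphabet $\{U_n^20^k\}$ (Lemma \ref{alphabet}), and a prefix lemma asserting that every factor of length $m$ already occurs in $W_0W_1\cdots W_m$ (Lemma \ref{prefix}); the counting is then performed inside $U_nU_n$ and inside the straddling sets $i(U_n,0^k,U_n)$. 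You instead extract the explicit description -- ones at positions $2k-s_2(k)$, hence gaps forming the ruler sequence $e_j=1+v_2(j)$ -- and your single rigidity lemma (that $v_2(j+i)=v_2(i)$ when $v_2(i)\neq v_2(j)$, so the neighbourhood of a long gap is universal) does the combined work of Lemmas \ref{plage}, \ref{alphabet} and \ref{prefix}. The final parametrization is identical in both proofs: factors are counted by the pair (length of the distinguished block of zeros, its offset in the window), which is exactly the content of the sets $i(U_n,0^k,U_n)$ in Proposition \ref{upper} and of the words $\alpha_k0^k\beta_k$ in Proposition \ref{lower}. What your route buys is concision and a conceptual explanation of the exponent $2$ (two genuine degrees of freedom: which maximal gap, and where it sits), together with a clean tool for the truncated-gap boundary cases; what the paper's route buys is explicit constants (both bounds of order $\frac{(m-\log m)^2}{2}$) and machinery that transfers almost verbatim to $q\geq 3$ (Theorem \ref{caseq}), where the coefficient sequence lives on $\{0,\pm1\}$ and is no longer governed by a ruler sequence, so your valuation identity has no direct analogue there.

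Two incidental slips, neither of which affects the proof. First, over $\F_2$ one has $1/\Pi_2=\prod_{j\geq 1}\bigl(1+T^{-(2^j-1)}\bigr)$ exactly; the monomial factor only enters if you start from $\widetilde{\Pi}_2=T^2\Pi_2$ rather than $\Pi_2$ (harmless, since a shift does not change $\Theta$-complexity). Second, the parenthetical ``$\a$ contains arbitrarily long runs of $0$'s, so it is not automatic'' is a non sequitur: automatic sequences can contain arbitrarily long runs of zeros (e.g.\ the characteristic sequence of powers of $2$). The non-automaticity here follows from the quadratic complexity you are proving, via Cobham's linear bound, not from the runs themselves.
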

\begin{theoreme}\label{caseq}
   Let $q \geq 3$.  The complexity of the inverse of $\Pi_q$ satisfies: $$p\left(\frac{1}{\Pi_q},m\right)=\Theta(m).$$
\end{theoreme}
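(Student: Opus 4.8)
The plan is to reduce the statement to a purely combinatorial analysis of one explicit power series. Starting from the Carlitz product formula, one writes $1/\Pi_q$ as a fixed nonzero scalar of $\F_q$ times a (possibly fractional) power of $T$ times the infinite product $P(T)=\prod_{i\geq 1}\left(1-T^{1-q^i}\right)$. Since multiplying by a nonzero scalar and translating by a power of $T$ changes neither the set of factors nor their number, it suffices to compute $p(P,m)$, where I regard $P$ as a power series in $u=T^{-1}$, namely $P=\prod_{i\geq 1}\left(1-u^{q^i-1}\right)\in\F_q[[u]]$. Note that it is precisely the \emph{inverse} of $\Pi_q$ that carries the clean finite-product structure (the partial products are polynomials), which is what makes the problem tractable.

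Next I would exhibit the self-similar block structure of the coefficient word. Writing $A_k$ for the coefficient word of the partial product $P_k=\prod_{i=1}^k(1-u^{q^i-1})$ and $\sigma$ for the letter-to-letter negation $x\mapsto -x$ of the alphabet $\F_q$, the identity $P_{k+1}=P_k-u^{q^{k+1}-1}P_k$ together with $\deg P_k=D_k:=\sum_{i=1}^k(q^i-1)$ yields the recursion $A_{k+1}=A_k\,0^{g_k}\,\sigma(A_k)$, where $g_k=(q^{k+1}-1)-(D_k+1)$ is the length of the zero block separating the two halves. The key computation, and the place where the hypothesis $q\geq 3$ enters, is that $g_k=\frac{q-2}{q-1}q^{k+1}+O(k)$ grows geometrically, whereas for $q=2$ one finds merely $g_k=k$, which is the source of the quadratic behaviour in Theorem \ref{case2}. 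Iterating the recursion shows that $P$ itself is an infinite concatenation of blocks $\sigma^{\varepsilon}(A_k)$ with $\varepsilon\in\{0,1\}$, separated by zero runs all of length at least $g_k$.

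For the upper bound, fix $m$ and choose the scale $k$ with $g_{k-1}<m\leq g_k$; from the growth of $g_k$ one gets $L_k:=|A_k|=D_k+1=\Theta(q^{k+1})=O(m)$. Because any two consecutive scale-$k$ blocks are separated by at least $g_k\geq m$ zeros, a window of length $m$ can meet at most one such block. Hence every factor of length $m$ occurring in $P$ is a factor of $0^{\omega}\sigma^{\varepsilon}(A_k)0^{\omega}$ for some $\varepsilon\in\{0,1\}$; as the number of length-$m$ factors of a bi-infinite word that is zero off a block of length $L_k$ is at most $L_k+m$, we conclude $p(P,m)\leq 2(L_k+m)=O(m)$. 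The matching lower bound is immediate: $\Pi_q$ is transcendental over $\F_q(T)$, so its inverse is irrational, its coefficient sequence is not eventually periodic, and the Morse--Hedlund bound (\ref{morse_hed}) gives $p(P,m)\geq m+1$. Combining the two bounds yields $p(1/\Pi_q,m)=\Theta(m)$.

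The main obstacle I anticipate is not the final counting, which is short once the geometry is in place, but setting up the block recursion cleanly and controlling the signs: for odd $q$ the map $\sigma$ is a genuine negation, and one must check that it only multiplies the factor count by a bounded constant, while for $q$ a power of $2$ one has $\sigma=\mathrm{id}$ and the alphabet reduces to $\{0,1\}$. A secondary point requiring care is the reduction in the first step, namely verifying that the scalar and the (possibly fractional) power of $T$ appearing in the Carlitz formula genuinely leave the subword complexity unchanged; this is where one should pin down the precise normalization of $\Pi_q$ fixed at the start of Section 3.
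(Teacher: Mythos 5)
Your proposal is correct, and its skeleton coincides with the paper's: a self-similar decomposition of the coefficient word into $\pm$-copies of a prefix separated by zero runs that grow geometrically (this is where $q\geq 3$ enters, exactly as you say), an upper bound obtained by matching the scale $k$ to $m$ so that the runs isolate single blocks, and Morse--Hedlund via Inequality (\ref{morse_hed}) for the lower bound. The genuine difference is the route to the structure. The paper starts from the coefficient formula (\ref{definition}), $p_n=(-1)^{\mathrm{card}\,J}$ when $n=\sum_{j\in J}(q^j-1)$, and proves arithmetic lemmas about such representations (non-representability in the range $[2(q^n-1),q^{n+1}-2]$, and the sign-flip Lemma \ref{l3}) to arrive at Lemma \ref{concate} and the recursion $U_{n+1}=U_n\widehat{U_n}0^{\alpha_n}$; you obtain the same recursion (up to where the block boundary is drawn -- your $A_k$ stops at the last nonzero coefficient, the paper's $U_n$ carries trailing zeros) in one line from the polynomial identity $P_{k+1}=P_k-u^{q^{k+1}-1}P_k$ and the degree count $\deg P_k=\sum_{i\leq k}(q^i-1)$. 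This is cleaner, needs no arithmetic of representations, and makes transparent why $q=2$ degenerates ($g_k=k$ versus $g_k=\Theta(q^{k+1})$), which is exactly the dichotomy between Theorems \ref{case2} and \ref{caseq}. Your counting -- each length-$m$ window meets at most one block, so every factor occurs in $0^{\omega}\sigma^{\varepsilon}(A_k)0^{\omega}$, giving at most $2(L_k+m)=O(m)$ factors -- is also a tighter packaging of the paper's enumeration over the six words $U_n\widehat{U_n}$, $\widehat{U_n}U_n$, $0^{\alpha_n}U_n$, $0^{\alpha_n}\widehat{U_n}$, $U_n0^{\alpha_n}$, $\widehat{U_n}0^{\alpha_n}$. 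Two small remarks. First, the normalization worry in your opening step is vacuous: with the paper's definition of $\Pi_q$ as the one-unit part of $\widetilde{\Pi}_q$, the inverse $1/\Pi_q$ is exactly your product $P$, with no scalar and no fractional power of $T$ (those occur only in $\widetilde{\Pi}_q$). Second, your lower bound leans on Wade's transcendence theorem; this is legitimate and not circular (the paper deduces transcendence from complexity only for $q=2$), but it is far heavier than needed: your own recursion shows the coefficient word has zero runs of unbounded length yet infinitely many nonzero letters, hence is not eventually periodic -- this is precisely the paper's Remark \ref{notperiodiq} -- and Morse--Hedlund then gives $p(1/\Pi_q,m)\geq m+1$ with no external input.
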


Since any algebraic series has a linear complexity (by Theorem \ref{algebraic}), the following corollary yields.

\begin{coro}\label{pi_2} $\Pi_2$ is transcendental over $\mathbb{F}_2(T)$.\end{coro}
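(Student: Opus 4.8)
The plan is to deduce the transcendence of $\Pi_2$ directly from the quadratic growth of the complexity of its inverse, by invoking the contrapositive of Theorem \ref{algebraic}. First I would record that Theorem \ref{case2} gives $p\left(\frac{1}{\Pi_2},m\right)=\Theta(m^2)$. By the definition of $\Theta$ adopted above, there is a constant $k_1>0$ and an index $n_0$ with $p\left(\frac{1}{\Pi_2},m\right)>k_1 m^2$ for all $m>n_0$; hence no constant $k$ can satisfy $p\left(\frac{1}{\Pi_2},m\right)<km$ for all large $m$, and the complexity of $\frac{1}{\Pi_2}$ is \emph{not} $O(m)$. Since Theorem \ref{algebraic} asserts that every element of $\F_2((T^{-1}))$ algebraic over $\F_2(T)$ has complexity $O(m)$, the contrapositive immediately shows that $\frac{1}{\Pi_2}$ is transcendental over $\F_2(T)$.

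The second step transfers transcendence from $\frac{1}{\Pi_2}$ back to $\Pi_2$. Here I would use the elementary fact that the set of elements of $\F_2((T^{-1}))$ which are algebraic over $\F_2(T)$ is a field (the relative algebraic closure of $\F_2(T)$ inside $\F_2((T^{-1}))$), and in particular is closed under taking inverses of nonzero elements. Consequently, if $\Pi_2$ were algebraic over $\F_2(T)$, then $\frac{1}{\Pi_2}$ would be algebraic as well, contradicting the conclusion of the first step. Since $\Pi_2\neq 0$, we conclude that $\Pi_2$ is transcendental over $\F_2(T)$.

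I do not anticipate any genuine obstacle in this argument, as the entire analytic content has already been established in Theorem \ref{case2}: the quadratic complexity is exactly what breaks the linear ceiling that Christol's and Cobham's theorems impose through Theorem \ref{algebraic}. The only point meriting an explicit word is the field-theoretic closure invoked in the second step, which guarantees that inversion preserves algebraicity; this is standard and requires no computation.
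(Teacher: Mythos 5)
Your proof is correct and follows essentially the same route as the paper: the paper likewise combines Theorem \ref{case2} with Theorem \ref{algebraic} to conclude that $\frac{1}{\Pi_2}$ is transcendental (this is its Corollary \ref{transc}), and then passes to $\Pi_2$ via the standard fact that algebraicity is preserved under inversion. Your write-up merely makes explicit the contrapositive argument and the field-theoretic closure step that the paper leaves implicit.
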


\medskip

The transcendence of $\Pi_q$ over $\F_q(T)$ was first  proved by Wade in 1941 (see \cite{wade}) using an analog of a classical method of transcendence  in zero characteristic. Another proof was given by Yu in 1991 (see \cite{yu}), using the theory of Drinfeld modules. Then, de Mathan and Cherif, in 1993 (see \cite{demathan_cherif}), using tools from  Diophantine approximation, proved a more general result, but in particular their result implied the transcendence of $\Pi_q$.

Christol's theorem has also been used as a combinatorial criterion   in order to prove the transcendence of $\Pi_q$. This is what is usually called  an ``automatic proof''.  The non-automaticity and also the transcendence, was first obtained  by Allouche, in \cite{allouche},  via the so-called $q$-kernel. Notice that our proof of transcendence here is based also by Christol's theorem, but  we obtain the non-automaticity of $\Pi_2$ over $\F_2(T)$ as a consequence of the subword complexity.

\medskip

%%b 
Furthermore, motivated by Theorems \ref{case2}, \ref{caseq} and by  Conjecture \ref{maximal}, we  consider the classes of Laurent series of at most polynomial complexity $\mathcal P$ and of zero entropy $\mathcal Z$ (see Section \ref{closure}), which seem to be good candidates to enjoy some nice closure properties. In particular, we prove the following theorem. 

%that both classes are vector spaces over $\F_q(T)$.

\begin{theoreme}\label{vector space} $\mathcal{P}$  and  $\mathcal{Z}$ are  vector spaces over $\F_q(T)$.
\end{theoreme}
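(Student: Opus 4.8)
\emph{The plan} is to exploit that both $\mathcal P$ and $\mathcal Z$ already live inside the field $\F_q((T^{-1}))$, which is itself a vector space over $\F_q(T)$. Thus it suffices to check that each is an $\F_q(T)$-subspace, i.e.\ that it contains $0$, is stable under addition, and is stable under multiplication by an arbitrary $r\in\F_q(T)$; closure under scalar multiplication by $-1\in\F_q$ then supplies additive inverses. The zero series has bounded complexity and so lies in both classes, leaving only the two closure properties. Recall that $f\in\mathcal P$ means $p(f,m)=O(m^{s})$ for some $s\ge 0$, while $f\in\mathcal Z$ means that the entropy $h(f):=\lim_{m}\frac{\log p(f,m)}{m}=0$; this limit exists by Fekete's lemma, since $p(f,\cdot)$ is non-decreasing and submultiplicative, $p(f,m+n)\le p(f,m)\,p(f,n)$. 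Both facts I would record first, as they are used repeatedly.

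For addition, I would use that over $\F_q$ there are no carries: if $f,g$ have coefficient sequences $\a=(a_n)$ and $\b=(b_n)$, then $f+g$ has coefficient sequence $(a_n+b_n)$. A length-$m$ factor of $(a_n+b_n)$ starting at position $n$ is the termwise sum of the corresponding length-$m$ factors of $\a$ and of $\b$, so the pair of those two factors determines it, giving
$$p(f+g,m)\le p(f,m)\,p(g,m).$$
For $f,g\in\mathcal P$ the right-hand side is a product of polynomially bounded functions, hence polynomial; for $f,g\in\mathcal Z$ one has $\frac{\log p(f+g,m)}{m}\le\frac{\log p(f,m)}{m}+\frac{\log p(g,m)}{m}\to 0$. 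So both classes are stable under $+$.

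The scalar-multiplication step is the substantive one. Fix $r=P/Q$ with $P=\sum_{j=0}^{e}p_jT^{j}$, $Q=\sum_{i=0}^{d}q_iT^{i}$ and $q_d\ne 0$, and let $g=rf$ have coefficient sequence $(c_n)$. Comparing coefficients of $T^{-m}$ in $Qg=Pf$ yields, for every $m$,
$$\sum_{i=0}^{d}q_i c_{m+i}=\sum_{j=0}^{e}p_j a_{m+j},$$
and since $q_d\ne 0$ this expresses $c_{m+d}$ as a fixed function of the input window $(a_m,\dots,a_{m+e})$ and the output window $(c_m,\dots,c_{m+d-1})$. I read this as saying that $(c_n)$ is produced from $(a_n)$ by a finite-state transduction whose internal state is the length-$d$ window of already-emitted output symbols, of which there are at most $q^{d}$. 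Iterating the recursion, a length-$m$ factor of $(c_n)$ is determined by its initial state together with a length-$(m+e)$ factor of $(a_n)$, whence
$$p(rf,m)\le q^{d}\,p(f,m+e).$$
For $f\in\mathcal P$ this gives $p(rf,m)=O((m+e)^{s})=O(m^{s})$, and for $f\in\mathcal Z$ it gives $\frac{\log p(rf,m)}{m}\le\frac{d\log q+\log p(f,m+e)}{m}\to h(f)=0$, so both classes are stable under multiplication by $r$, completing the subspace verification.

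\emph{The main obstacle} is the scalar-multiplication estimate, and inside it the division by $Q$: one must see that inverting a polynomial cannot blow the complexity up faster than by a bounded factor and a bounded shift. The key idea that makes this work is that the recurrence $Qg=Pf$ is a \emph{bounded-memory} device, with the output window itself serving as the finite state; this is precisely why the multiplicative constant is the fixed $q^{d}$ rather than a quantity growing with $m$. The only care I expect to need is bookkeeping of the window lengths (so that the additive shift $e$ and the factor $q^{d}$ are genuinely independent of $m$) and the remark that $p(f,\cdot)$ is non-decreasing, which lets me absorb $m-d+e$ into $m+e$. Once the bound $p(rf,m)\le q^{d}\,p(f,m+e)$ is established, preservation of polynomial growth and of zero entropy are both immediate.
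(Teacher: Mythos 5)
Your proof is correct, but the key step---stability under multiplication by a rational function---follows a genuinely different route from the paper's. The paper reduces Theorem \ref{vector space} to Propositions \ref{addition} and \ref{ration}; your addition bound $p(f+g,m)\le p(f,m)\,p(g,m)$ coincides with the upper bound of Proposition \ref{addition}, so there you agree. For scalar multiplication, however, the paper first treats polynomial multipliers (Proposition \ref{poly}) and then, in Proposition \ref{ration}, exploits the eventual periodicity of the coefficient sequence of $r(T)$ to write $r$ as a polynomial part plus a tail of the form $p_2(T)\,T^{-(S+L-1)}(1+T^{-L}+T^{-2L}+\cdots)$; the geometric-series factor is controlled through the telescoping identity $c_{n+L}-c_n=a_{n+L}$, and the argument runs in five steps, treating separately $m$ divisible by $L$ and general $m$, ending with the constant $q^{S+2L-2}$. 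You instead clear denominators: from $Qg=Pf$ with $q_d\neq 0$ you solve for $c_{m+d}$ and view $(c_n)$ as produced by a bounded-memory transduction whose state is the length-$d$ window of output symbols, which yields $p(rf,m)\le q^{d}\,p(f,m-d+e)\le q^{d}\,p(f,m+e)$ in one stroke. This single recurrence argument is shorter, needs no case analysis on $m$ modulo $L$, handles polynomial and genuinely rational multipliers uniformly (and absorbs principal parts automatically, where the paper strips them off first via Propositions \ref{addition} and \ref{poly}), and gives the explicit constant $q^{\deg Q}$ and shift $\deg P$; what it does not give is the paper's finer structural description of the coefficient sequence of $rf$, nor of course the lower bound of Proposition \ref{addition} (Dirichlet's principle), but neither is needed for the vector-space statement. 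Your bookkeeping at the edges---monotonicity of $p(f,\cdot)$ to absorb the shift, the trivial bound when $m\le d$, and the passage to entropy via $\frac{\log p(f,m+e)}{m}\to 0$---is sound.
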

\medskip
%%b 
Another motivation of this work is  the article \cite{beals_thakur} of Beals and Thakur. These authors proposed a classification of Laurent series in function of their space or time complexity. This complexity is in fact a characteristic of the (Turing) machine that computes the coefficient $a_i$, if  $f(T):=\sum_{i}{a_iT^{-i}}$.   They showed that some classes of Laurent series have good algebraic properties (for instance, the class of Laurent series corresponding to  any deterministic space class at least linear form a field). They also place some Carlitz's analogs  in the computational hierarchy.

\bigskip

This paper is organized as follows.  Some definitions and basic notions on combinatorics on words and Laurent  series are recalled in Section 2.  Section 3 is devoted to the study of the Carlitz's analog of $\pi$; we prove Theorems \ref{case2} and \ref{caseq}. In Section 4 we study some closure properties of Laurent series of ``low'' complexity (addition,  Hadamard product, derivative, Cartier operator) and we prove Theorem \ref{vector space}; in particular, this provides a criterion of linear independence over $\F_q(T)$ for two Laurent series in function of their complexity. Finally, we conclude in Section 5 with some remarks concerning the complexity of the Cauchy product of two Laurent series, which seems to be a more difficult problem.

\medskip

\section{Terminologies and basic notions}

In this section, we briefly recall some definitions and well-known results from combinatorics on words. Moreover, we recall some basic notions on algebraic Laurent series.\\

A \emph{word} is a finite, as well as infinite, sequence of symbols (or letters) belonging to a nonempty set $\mathcal A$, called \emph{alphabet}. We usually denote words by juxtaposition of theirs symbols.  \\

Given an alphabet $\mathcal A$, we denote by $\mathcal{ A}^*:= \cup_{k=0}^{\infty} \mathcal {A}^k$ the set of finite  words over $\mathcal A$.  Let $V:=a_0a_{1}\cdots a_{m-1} \in \mathcal{ A}^*$. Then the integer $m$ is the length of $V$ and  is denoted by $\left|V\right|$. The word of length $0$ is the empty word, usually denoted by $\varepsilon$.  We also denote by $\mathcal{A}^m$ the set of all finite words of length $m$ and by $\mathcal{A}^{\N}$ the set of all infinite words over $\mathcal A$. We typically use the uppercase italic letters  $X, Y, Z, U, V, W$ to represent elements of $\mathcal{ A}^*$. We also use  bold lowercase letters $\a, \bf b, \bf c, \bf d, \bf e, \bf f$ to represent infinite words. The elements of $\mathcal{ A}$ are usually denoted by lowercase letters $a,b,c,\cdots$.\\

We say that $V$ is a \emph{factor} (or \emph{subword}) of a finite  word $U$ if there exist some finite words $A$, $B$, possibly empty such that $U=AVB$ and we denote it by $V \triangleleft U$. Otherwise, $V \ntriangleleft U$. We say that $X$ is a \emph{prefix} of $U$, and we denote by $X\prec_p U$ if there exists $Y$ such that $U=XY$. We say that $Y$ is a \emph{suffix} of $U$, and we denote by $Y \prec_s U$ if there exists $X$ such that $U=XY$.\\

Also, we  say that a finite word $V$ is a factor (or subword) of an infinite word $\a=(a_n)_{n\geq 0}$ if there exists a nonnegative integer $j$ such that $V=a_ja_{j+1}\cdots a_{j+m-1}$. The integer $j$ is called an occurence of $V$. \\

Let $U,V,W$ be three finite words over $\mathcal A$, $V$ possibly empty. We denote: $$i(U,V,W):=\{ AVB, \, A\prec_s U, \, B\prec_p W, \, A,B\text{ possibly empty}\},$$ and
 $$i(U,V,W)^+:=\{ AVB, \, A\prec_s U, \, B\prec_p W, \, A,B\text{ nonempty}\}.$$

\medskip

If $n$ is a nonnegative integer, we denote by $U^{n}:=\underbrace{UU\cdots U}_{n \text{ times}}$. We denote also $U^{\infty}:=UU\cdots  $, that is $U$ concatenated (with itself) infinitely many times.
An infinite word $\a$ is \emph{periodic} if there exists a finite word $V$ such that $\a=V^{\infty}$. An infinite word is \emph{eventually periodic} if there exist two finite words $U$ and $V$ such that $\a=UV^{\infty}$.

The fundamental operation  on words is concatenation. Notice that $\mathcal{A}^*$, together with concatenation, form  the  \emph{free monoid} over $\mathcal A$, whose neutral element is the empty word $\varepsilon$.

\subsection{Subword complexity}\label{defini} Let $\a$ be an infinite word over $\mathcal A$. As already mentioned in Introduction,  the \emph{subword complexity} of $\a$ is the function that associates to each $m \in \N$ the number $p(\a,m)$  defined as follows:
$$p(\a,m)=\mathrm{Card}\{(a_j,a_{j+1},\ldots, a_{j+m-1}), \, j\in \N\}.$$
 
 For any word $\a$, $p(\a,0)=1$ since, by convention, the unique word of length $0$  is  the empty word $\varepsilon$. \\ 

For example, let us consider the infinite word $\a=aaa\cdots $, the concatenation of a letter $a$ infinitely many times. It is obvious that $p(\a,m)=1$ for any $m\in \N$. More generally, if $\a$ is eventually periodic, then  its complexity function is bounded.

 On the other side, let us consider the infinite word of Champernowne over the alphabet $\{0, 1,2,3, \ldots,9\}$, $\a:=0123456789101112\cdots$. Notice that $p(\a,m)=10^m$ for every positive integer $m$. 
 
 More generally, one can easily prove that for every $m\in \N$ and for every  word $\a$  over the alphabet $\mathcal A$, we have the following:
$$1\leq p(\a,m)\leq (\mathrm{card}\,\mathcal{A})^m.$$

We  give now an important tool we shall use in general, in order to obtain a bound  of the subword complexity function (for a proof see for example \cite{Allouche_Shallit}):

\begin{lemma}\label{add} Let $\a$ be an infinite word over an alphabet $\mathcal A$. We have the following properties:
\begin{itemize}
\item $p(\a,m)\leq p(\a,m+1) \leq \mathrm{card}\,\mathcal{A} \cdot p(\a, m)$, for every integer $m\geq 0$;

\item $p(\a,m+n)\leq p(\a,m)p(\a,n)$, for all integers $m,n \geq 0$. \end{itemize}
\end{lemma}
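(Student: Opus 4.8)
The plan is to work directly with the sets of factors and the natural maps between them. For each $m$, write $F_m$ for the set of distinct factors of length $m$ occurring in $\a$, so that $p(\a,m)=\mathrm{Card}\,F_m$. Both inequalities in the first item will come from a single truncation map, and the second item from a factorization map; everything reduces to checking that a map is surjective (or injective) and has small fibers.

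First I would handle the first item. Consider the map $\phi\colon F_{m+1}\to F_m$ sending each factor $W=a_ja_{j+1}\cdots a_{j+m}$ of length $m+1$ to its prefix $a_j\cdots a_{j+m-1}$ of length $m$; this is well defined since a prefix of a factor is again a factor. The map $\phi$ is surjective: if $V=a_j\cdots a_{j+m-1}\in F_m$ occurs at position $j$, then, because $\a$ is an \emph{infinite} word, the letter $a_{j+m}$ exists and $a_j\cdots a_{j+m}\in F_{m+1}$ is a preimage of $V$. Surjectivity of $\phi$ gives $p(\a,m)\leq p(\a,m+1)$. For the upper bound, each fiber $\phi^{-1}(V)$ consists of words of the form $Va$ with $a\in\mathcal A$, hence has at most $\mathrm{card}\,\mathcal A$ elements; since $F_{m+1}=\bigcup_{V\in F_m}\phi^{-1}(V)$, summing the fiber sizes yields $p(\a,m+1)\leq \mathrm{card}\,\mathcal A\cdot p(\a,m)$. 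This settles the chain $p(\a,m)\leq p(\a,m+1)\leq \mathrm{card}\,\mathcal A\cdot p(\a,m)$.

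For the second item I would use a factorization map. Define $\psi\colon F_{m+n}\to F_m\times F_n$ by sending a factor $W=a_j\cdots a_{j+m+n-1}$ to the pair consisting of its length-$m$ prefix $a_j\cdots a_{j+m-1}$ and its length-$n$ suffix $a_{j+m}\cdots a_{j+m+n-1}$. Both coordinates are genuine factors of $\a$, so $\psi$ is well defined, and it is injective because $W$ is recovered from the pair simply by concatenation (the lengths add up to $m+n$, so no information is lost). Injectivity of $\psi$ gives $p(\a,m+n)=\mathrm{Card}\,F_{m+n}\leq \mathrm{Card}(F_m\times F_n)=p(\a,m)\,p(\a,n)$, which is the claimed submultiplicativity.

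This is an elementary counting lemma, so there is no serious obstacle; the only point requiring care is the surjectivity step in the first item, which is exactly where the hypothesis that $\a$ is infinite enters (it guarantees that every factor admits at least one right extension). The remaining verifications — that prefixes and suffixes of factors are factors, and that the factorization map loses no information — are immediate from the definition of a factor of an infinite word.
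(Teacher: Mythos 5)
Your proof is correct. The paper itself does not prove this lemma --- it simply cites Allouche and Shallit's book --- and your argument (the prefix-truncation map $\phi$ with surjectivity and fibers of size at most $\mathrm{card}\,\mathcal{A}$ for the first item, and the injective prefix--suffix factorization map $\psi$ for the second) is exactly the standard proof found there, with the one genuinely delicate point correctly identified: surjectivity of $\phi$ uses that $\a$ is infinite, so every factor extends to the right.
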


\medskip
Let $\F_q$ be the finite field with $q$ elements,  where $q$ is a power of a prime number $p$. \\

In this paper, we are interested in Laurent series with coefficients in  $\F_q$. Let  $n_0 \in \N$ and consider the Laurent series: $$f(T)=\sum_{n=-n_0}^{+\infty}{a_nT^{-n}} \in \F_q((T^{-1})).$$

 Let $m$ be a nonnegative integer. We define \emph{the complexity} of  $f$,  denoted by $p(f,m)$, as being equal to the complexity of the infinite word $\a=(a_n)_{n\geq 0}$.

\medskip
\subsection{Topological entropy} Let $\a$ be an infinite word over an alphabet $  \mathcal A$. \emph{The (topological) entropy} of $\a$ is defined as follows:
$$h(\a)= \lim_{m\rightarrow \infty}\frac{\log p(\a,m)}{m}.$$

The limit exists as an easy consequence of the following property: $p(\a,n+m)\leq p(\a,n)p(\a,m)$, for every $m,n \geq 0$ (which is the second part of the Lemma \ref{add}). If the base of the logarithm is the cardinality of the alphabet then:
$$0\leq h(\a)\leq 1.$$
Notice that, by definition, the ``simpler''  the sequence is, the smaller its entropy is. \\

Let  $n_0 \in \N$ and consider the Laurent series $$f(T)=\sum_{n=-n_0}^{+\infty}{a_nT^{-n}} \in \F_q((T^{-1})).$$
We define  \emph{the entropy} of  $f$, denoted by  $h(f)$, as being equal to the entropy of the infinite word $\a=(a_n)_{n\geq 0}$.

%By theorem \ref{morse}, one can easily see  the  that the complexity of a rational series over $F_q$ is in $O(1)$.

 %Moreover, there is an important result of Christol that characterizes   algebraic formal power series over $\F_q(T)$ from a combinatorial point of view. First of all, recall some definitions about algebraic  power series.

%\emph{Automatic sequences.} A sequence $\a=(a_n)_{n \geq 0}$ is said to be $q$-automatic if there exists a $q$-automaton (a machine of a finite number of states) that generates $a_n$ where the entry is the $q$-ary expansion of $n$, for all $n\geq 0$. 

\medskip

\subsection{Morphisms}\label{morphisms}

Let $\mathcal A$ (respectively $\mathcal B$) be an alphabet and let $\mathcal{A}^*$ (respectively $\mathcal{B}^*$) be the corresponding free monoid. A morphism  $\sigma$ is a map from $\mathcal{A}^*$ to $\mathcal{B}^*$ such that $\sigma(UV)=\sigma(U)\sigma(V)$ for all words $U,V \in \mathcal{A}^*$. Since the concatenation is preserved, it is then possible  to define a morphism on $\mathcal A$.

If $\mathcal {A}= \mathcal{B}$ we can iterate the application of $\sigma$. Hence, if $a\in \mathcal A$, $\sigma^0(a)=a$, $\sigma^i(a)=\sigma(\sigma^{i-1}(a))$, for every $i\geq 1$.

Let $\sigma: \mathcal{A}\rightarrow \mathcal{A}$ be a morphism.   The set $\mathcal A^* \cup \mathcal A ^{\N}$ is endowed with a natural topology. 
%%b 
Roughly, two words are close if they have a long common prefix. 
We can thus extend the action of a morphism by continuity to $\mathcal A^* \cup \mathcal A ^{\N}$. 
Then, a word $\a\in \mathcal{A}^{\N}$ is \emph{a fixed point} of a morphism $\sigma$ if $\sigma(\a)=\a$.\\

%Let $U:=U_0U_1\cdots$ and $V:=V_0V_1\cdots$ be two distinct words belonging to $\mathcal A^* \cup \mathcal A ^{\N}$. We define a distance as follows:$$d(U,V)=2^{-\min\{n\in \N, \, U_n \neq V_n\}}.$$With respect to this distance, $\mathcal A^{\N}\cup \mathcal A^*$ becomes a topological space, usually called the \emph{Cantor space}.\\

A morphism $\sigma$ is \emph{prolongable} on $a\in \mathcal{A}$ if $\sigma (a)=ax$, for some $x\in \mathcal{A}^{+}:=\mathcal{A}^*\backslash \{\varepsilon\}$. If $\sigma$ is prolongable then the sequence  $(\sigma^i(a))_{i\geq 0}$ converges to the infinite word: $\sigma^{\infty}(a)=\lim_{i\rightarrow \infty}\sigma^i(a)=ax\sigma(x)\sigma^2(x)\sigma^3(x)\cdots$.

\begin{ex}The Fibonacci word $\textbf {f}=0100101001001\cdots$ is an example of an infinite word generated by iterating the morphism: $\sigma(0)=01$ and $\sigma(1)=0$. More precisely, $\textbf {f}=\sigma^{\infty}(0)$ is  the unique fixed point of $\sigma$.
\end{ex}

The \emph{order of growth} of a letter $x$ is the function $\left|\sigma^n(x)\right|$, for $n\geq 0$. In general, this function   is bounded or, if not,  is growing asymptotically like the function $n^{a_x}b_x^n$. A morphism is said to be \emph{polynomially diverging} if there exists $b>1$ such that, for any letter $x$, the order of growth of $x$ is $n^{a_x}b^n$ and $a_x\geq 1$ for some $x$. A morphism is \emph{exponentially diverging} if every letter $x$ has the order of growth  $n^{a_x}b_x^n$ with $b_x > 1$ and not all $b_x$ are equal. For more details the reader may refer to \cite{Pansiot}.

 A morphism $\sigma$ is said to be uniform of length $m\geq 2$ if $\left|g(x)\right|=m $. Notice that a word 
 %%b Fais attention ! Tu changes modulus en length et la ligne d'apres tu remets modulus.
 generated by an uniform morphism of length $m$ is $m$-automatic (see for example \cite{Cobham}). In particular, its complexity is $O(1)$ if the word is eventually periodic; otherwise, it is $\Theta (m)$.

For more about the complexity function of words generated by morphisms there is a classical theorem of Pansiot  \cite{Pansiot} that characterizes the asymptotic behavior of factor complexity of  words obtained by iterating a morphism.

\medskip

%\subsection{Sturmian sequences\label{sturmian}

\medskip

\subsection{Algebraic Laurent series} A Laurent series $f(T)=\sum_{n\geq -n_0}{a_nT^{-n}}\in \F_q((T^{-1}))$ is said to be \emph{algebraic} over the field $\F_q(T)$ if there exist an integer $d\geq 1$ and polynomials $A_0(T), A_1(T), \ldots, A_d(T)$, with coefficients in  $\F_q$ and not all zero, such that:
$$A_0+A_1f+\cdots+A_df^d=0.$$
Otherwise, $f$ is transcendental over $\F_q(T)$.\\

Let us now give an example of Laurent series algebraic over the field of rational functions. 

\begin{ex} Let us consider  the formal  series $f(T)=\sum_{n\geq 0}{c_nT^{-n}}\in \F_3[[T^{-1}]]$ where $\textbf{c}:=(c_n)_{n\geq 0}$ is the Cantor sequence defined as follows:
$$c_n=\begin{cases} 1 \text { if }  (n)_3   \text{ contains only }  0  \text{ and } 2;\\ 0  \text{ if }  (n)_3\text{ contains the letter }1.\end{cases}.$$

Here $(n)_3$ denotes the expansion in base $3$ of $n$. By definition, we get that $c_{3n}=c_n=c_{3n+2}$ and $c_{3n+1}=0$, for all $n\in \N$.

 We have:
\begin{eqnarray*}
f(T)&=&\sum_{n\geq0} {c_{3n} T^{-3n}}+ \sum_{n\geq0} {c_{3n+1} T^{-3n-1}}+\sum_{n\geq0} {c_{3n+2} T^{-3n-2}}\\
   &=&\sum_{n\geq0} {c_{n} T^{-3n}}+\sum_{n\geq0} {c_{n} T^{-3n-2}}.\end{eqnarray*}
  
Hence, $$f(T)=f(T^3)+T^{-2}f(T^3)$$ and, since we are in characteristic $3$, we  obtain that $f$ satisfies the following equation:  $$(1+T^{2})f^2(T)-T^2=0.$$
 Thus $f$ is an algebraic Laurent series.

Notice also that, the infinite sequence $\textbf{c}$ is $3$-automatic, as predicted by Christol's theorem, and in particular the complexity of $\textbf c$ satisfies:
$$p(\textbf c, m)=O(m).$$
\end{ex}
%It is easy to prove that the sequence $\bf c$ is $3$-automatic, since there is the following automaton generating it:
%\begin{figure}[h]  \centering\includegraphics[scale=0.65]{cantor.pdf}
%\caption{Automate engendrant la suite des entiers de Cantor \label{automatecantor} \end{figure}By definition, one could easily see that $c_{3n}=c_n=c_{3n+2}$ and $c_{3n+1}=0$.\\\ We shall prove next that the formal power series $C(T)$ is algebraic over $\F_3(T)$:\begin{eqnarray*}C(T)&=&\sum_{n\geq0} {c_{3n} T^{-3n}}+ \sum_{n\geq0} {c_{3n+1} T^{-3n-1}}+\sum_{n\geq0} {c_{3n+2} T^{-3n-2}}\ &=&\sum_{n\geq0} {c_{n} T^{-3n}}+\sum_{n\geq0} {c_{n} T^{-3n-2}}.\end{eqnarray*}\ Hence, $$C(T)=C(T^3)+T^{-2}C(T^3)$$ and using the Frobenius map we finally obtain:  $$(1+T^{2})C(T)^2-T^2=0.$$\bigskip
 
% The example above is actually a particular case of a general property of algebraic formal power series over $\F_q(T)$. Indeed, Christol's theorem gives a combinatorial characterization of algebraic formal power series:

\section {An analogue of  $\Pi$}

  In 1935, Carlitz \cite{carlitz} introduced  for function fields in positive characteristic an analog of the exponential function defined over $\mathcal C_{\infty}$,  which is the completion of the algebraic closure of $\F_q((T^{-1}))$ (this is the natural analogue of the complex numbers field). In order to get good properties in parallel  with the complex exponential,  the resulting analogue, $z\rightarrow e_C(z)$, satisfies:
$$e_C(0)=0,  \, d/dz(e_C(z))=1 \text{ and } e_C(Tz)=Te_C(z)+e_C(z)^q.$$

This is what we call the Carlitz exponential and the action $u \rightarrow Tu+u^q$ leads to the definition of the Carlitz $\F_q[T]$-module, which is in fact a particular case of Drinfeld module.
The Carlitz exponential, $e_C(z)$, may be defined  by the following infinite product:
$$e_C(z)=z \prod_{a\in \F_q[T],\, a\neq 0}{(1-\frac{z}{a\widetilde {\Pi}_q})}$$
 where $$\widetilde{\Pi}_q=(-T)^{\frac{q}{q-1}}\prod_{j=1}^{\infty}{\left(1-\frac{1}{T^{q^{j}-1}}\right)^{-1}}.$$

 Since $e^z=1$ if and only if $z \in 2 \pi i \mathbb Z$ and since $e_C(z)$ was constructed by analogy  such that $e_C(z)=0$ if and only if $z \in \widetilde {\Pi}_q \F_q[T]$ (in other words the kernel of $e_C(z)$ is  $\widetilde{\Pi}_q \F_q[T]$), we get a good analogue $\widetilde{\Pi}_q$ of $2\pi i$.
In order to obtain a good analogue of the real number $\pi$,  we take its one unit part and hence we obtain:
  $$\Pi_q=\prod_{j=1}^{\infty}{\left(1-\frac{1}{T^{q^{j}-1}}\right)^{-1}}.$$
  
  \medskip
For more details about analogs given by the theory of Carlitz modules, and in particular about the exponential function or its fundamental period $\widetilde{\Pi}_q$, we refer the reader to the monographs \cite{goss, thakur}.
\medskip

If we look for the Laurent series expansion of $\Pi_q$, then we obtain that
\begin{equation}\nonumber
\Pi_q=\prod_{j=1}^{\infty}{\left(1-\frac{1}{T^{q^{j}-1}}\right)^{-1}}=\sum_{n\geq 0}{a_nT^{-n}},
\end{equation}
where $a_n$ is defined as the number of partitions of  $n$  whose parts take values in  $I=\{q^j-1,j\geq 1\}$, taken modulo $p$.

To compute the complexity of $\Pi_q$, we would like to find a closed formula or some recurrence relations for the sequence of partitions $(a_n)_{n\geq 0}$. This question seems quite difficult and we are not able to solve it at this moment. 

\medskip

However, it was shown  in \cite{allouche} that the inverse of  $\Pi_q$ has  the following  simple Laurent series expansion:

$$\frac{1}{\Pi_q}=\prod_{j=1}^{\infty}\left(1-\frac{1}{X^{q^j-1}}\right)=\sum_{n=0}^{\infty}{p_n X^{-n}}$$
where the sequence  $\p=(p(n))_{n\geq 0}$ is defined as follows:
\begin{equation}\label{definition} p_n=\begin{cases} 1 \text{ if } n=0;\\
 (-1)^{\mathrm{card}\, J} \text{ if there exists a set } J \subset \mathbb N^*  \text{ such that } n= \sum_{j\in J }{(q^j-1)}; \\
0 \text{ if there is no set } J \subset \mathbb N^*  \text{ such that } n= \sum_{j\in J }{(q^j-1)}. \end{cases}\end{equation}
We mention that if such a decomposition exists, it is  unique.  
\bigskip

%%b 
In the rest of this section we  will prove Theorem \ref{case2} and \ref{caseq}.

\subsection{Proof of Theorem \ref{case2}}

In this part we  study the  sequence  $\s=(p_n^{(2)})_{n\geq 0}$, defined  by the  formula (\ref{definition}) in the case where $q=2$. More precisely:
\begin{equation}\label{definit}p_n^{(2)}= \begin{cases} 1 \text{ if } n=0 \text{ or if there exists  } J \subset \mathbb N^*  \text{ such that } n= \sum_{j\in J }{(2^j-1)}; \\
0 \text{ otherwise}.
\end{cases}\end{equation}
 
In order to lighten the notations, in the rest of this subsection we set $p_n:=p_n^{(2)}$ so that $\s=p_0p_1p_2\cdots$.

For every $n \geq 1$, we denote by  $W_n$ the factor of $\s$ that occurs between positions $2^n-1$ and $2^{n+1}-2$, that is:  
$$W_n:=p_{2^n-1}\cdots p_{2^{n+1}-2}.$$
       We also set $W_0:=1$.
   Observe that $\left|W_n\right|=2^n$.\\
With these notations the infinite word $\s$ can be factorized as:
       $$\s=\underbrace{1}_{W_0}\underbrace{10}_{W_1}\underbrace{1  1 00 }_{W_2}\underbrace{11011000}_{W_3}\cdots=W_0W_1W_2\cdots.$$
         
         In order to compute the complexity function of $\s$, we need the following lemmas.
         
         \begin{lemma}\label{l1}Let $n$ and $k$ be two positive integers such that: $k< 2^n-1$. Then
        $k$ can be written as  $\sum_{j\in J }{(2^j-1)}$ if and only if $k+(2^n-1)$ can be written as $\sum_{i\in I }{(2^i-1)}$, where $I$ and $J$ are finite subsets of $\mathbb{N}^*$.\end{lemma}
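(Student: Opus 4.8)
The plan is to prove both implications by a direct magnitude comparison, exploiting the fact that the admissible ``parts'' $2^j-1$ grow geometrically; this is exactly what allows the largest part to be peeled off and matched with the shift by $2^n-1$. Throughout I would use the elementary identity
$$\sum_{i=1}^{n-1}(2^i-1)=2^n-n-1,$$
which records that the \emph{total} contribution of all parts of index at most $n-1$ stays strictly below $2^n-1$.

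For the forward implication, suppose $k=\sum_{j\in J}(2^j-1)$ with $k<2^n-1$. First I would observe that every index occurring in $J$ is at most $n-1$: if some $j\in J$ satisfied $j\geq n$, then the single part $2^j-1\geq 2^n-1$ would already force $k\geq 2^n-1$, contradicting $k<2^n-1$. Hence $n\notin J$, and setting $I:=J\cup\{n\}$ produces a set of distinct indices with $\sum_{i\in I}(2^i-1)=k+(2^n-1)$, as required.

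For the reverse implication, write $v:=k+(2^n-1)$ and suppose $v=\sum_{i\in I}(2^i-1)$. Since $1\leq k<2^n-1$, the value $v$ lies in the range $2^n\leq v<2^{n+1}-2$. I would then show that $n$ must belong to $I$: on one hand no index $i\geq n+1$ can appear, for a single such part already gives $2^{n+1}-1>v$; on the other hand, if $n$ were absent then $I\subseteq\{1,\dots,n-1\}$ and the identity above bounds the sum by $2^n-n-1<2^n\leq v$, a contradiction. With $n\in I$ established, removing it yields $J:=I\setminus\{n\}$ with $\sum_{j\in J}(2^j-1)=v-(2^n-1)=k$, completing the argument.

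I do not expect a genuine obstacle: the whole proof rests on the two inequalities ``one part of index $\geq n$ already overshoots'' and ``all parts of index $\leq n-1$ together undershoot $2^n-1$''. The only point requiring care is to keep the strict versus non-strict inequalities consistent across the range $2^n\leq v<2^{n+1}-2$, and to notice that the hypothesis $k\geq 1$ (so that $v\geq 2^n$) is precisely what forces $n\in I$ in the reverse direction. It is worth remarking that the uniqueness of these decompositions, noted just before the lemma, is implicit here: the geometric growth of the parts is exactly what makes $J\leftrightarrow I=J\cup\{n\}$ a clean bijection between representations of $k$ and of $k+(2^n-1)$.
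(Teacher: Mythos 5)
Your proof is correct and follows essentially the same route as the paper: peel off the part $2^n-1$, use the bound $\sum_{i=1}^{n-1}(2^i-1)=2^n-n-1<2^n-1$ to force $n\in I$ by contradiction in the reverse direction, and pass between $J$ and $I=J\cup\{n\}$. If anything, you are slightly more careful than the paper, which omits the explicit check that $n\notin J$ in the forward direction and does not spell out the range $2^n\leq k+(2^n-1)<2^{n+1}-2$.
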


 \begin{rem} This is equivalent to say that $a_k=1$ if and only if  $a_{k+(2^n-1)}=1$.
\end{rem}
\begin{proof} The first part is pretty obvious. If  $k=\sum_{j\in J }{(2^j-1)}$, then: \[k+(2^n-1)=\sum_{j\in J }{(2^j-1)+(2^n-1)}=\sum_{j\in J\cup \{n\}}{(2^j-1)}.\]
For the second part, let suppose  that  $k+(2^n-1)=\sum_{i\in I }{(2^i-1)}$.  We prove that $k$ can be also represented in this form. More precisely, we show that $n\in I$ and consequently $k= \sum_{i\in I \setminus \{n\} }{(2^i-1)}$.

Notice that  $I$ cannot contain any index greater than $n$ since $k<2^n-1$. We argue by contradiction and we assume  that $I$ does not contain $n$.  Then  $\sum_{i\in I }{(2^i-1)} < 2^n-1$ since 
 \[ \sum_{i\in I }{(2^i-1)}\leq \sum_{i=1}^{ n-1 }{(2^i-1)}= 2(2^{n-1}-1)-(n-1)=2^n-n-1 < 2^n-1.\]
 
    Hence,  if $n$ does not belong to $I$, then $k+(2^n-1) < 2^n-1$ which is absurd. Consequently,  $n$  belongs to $I$  and thus $k= \sum_{i\in I \setminus \{n\} }{(2^i-1)}$.
\end{proof}

\bigskip
       
  \begin{lemma} \label{concaten}   
        For every $n\geq2$ we have  $W_n=1W_1W_2\cdots W_{n-1}0$.\end{lemma}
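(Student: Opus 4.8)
The plan is to read off the block $W_n$ letter by letter, using Lemma \ref{l1} together with the self-similar factorization $\s=W_0W_1W_2\cdots$. Throughout I index the letters of $W_n$ by their offset $r$ with $0\le r\le 2^n-1$, so that the letter at offset $r$ is $p_{2^n-1+r}$ (offset $0$ being $p_{2^n-1}$ and offset $2^n-1$ being $p_{2^{n+1}-2}$).

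First I would handle the initial $2^n-1$ letters, i.e.\ the offsets $0\le r\le 2^n-2$. For each such $r$ we have $r<2^n-1$, so Lemma \ref{l1} applies and gives $p_{2^n-1+r}=p_r$; the boundary case $r=0$ holds directly since $p_0=p_{2^n-1}=1$. Hence the first $2^n-1$ letters of $W_n$ coincide with the prefix $p_0p_1\cdots p_{2^n-2}$ of $\s$. I would then identify this prefix with $W_0W_1\cdots W_{n-1}$: since $\left|W_i\right|=2^i$, the concatenation $W_0W_1\cdots W_{n-1}$ has length $1+2+\cdots+2^{n-1}=2^n-1$, and by the very definition of the blocks it is exactly $p_0p_1\cdots p_{2^n-2}$. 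Because $W_0=1$, the first $2^n-1$ letters of $W_n$ are therefore $1W_1W_2\cdots W_{n-1}$.

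It remains to treat the last letter of $W_n$, namely $p_{2^{n+1}-2}$, and to show that it equals $0$. I would argue that $2^{n+1}-2$ admits no representation $\sum_{j\in J}{(2^j-1)}$ with $J\subset\mathbb{N}^*$ finite. Indeed, no index $j\ge n+1$ can occur, since already $2^{n+1}-1>2^{n+1}-2$, so $J\subseteq\{1,\ldots,n\}$; but then $\sum_{j\in J}{(2^j-1)}\le\sum_{j=1}^{n}{(2^j-1)}=(2^{n+1}-2)-n<2^{n+1}-2$, a contradiction. Thus $p_{2^{n+1}-2}=0$. Concatenating this with the previous step yields $W_n=1W_1W_2\cdots W_{n-1}0$, as claimed.

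I do not expect a genuine obstacle here: the argument is essentially bookkeeping, with Lemma \ref{l1} supplying the only real input. The two points deserving a little care are the boundary offset $r=0$ (treated directly) and the final-letter computation (a one-line sum estimate), neither of which presents any difficulty.
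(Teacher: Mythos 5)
Your proof is correct and follows essentially the same route as the paper: both identify the first $2^n-1$ letters of $W_n$ with the prefix $W_0W_1\cdots W_{n-1}=p_0p_1\cdots p_{2^n-2}$ via the shift-invariance of Lemma \ref{l1}, and handle the final letter $p_{2^{n+1}-2}=0$ separately. Your write-up is in fact slightly more careful than the paper's, which asserts $p_{2^{n+1}-2}=0$ without the sum estimate and does not flag the offset-$0$ boundary case where Lemma \ref{l1} (stated for positive $k$) does not literally apply.
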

        
      \begin{proof}   Clearly, the word $W_n$  begins with $1$ because $p_{2^n-1}=1$. The word $W_n$ ends with $0$ since the last letter is $p_{2^{n+1}-2}=0$.
       To explain the structure of $W_n$, we  split the word $W_n$ into subwords as follows:
      
     $$W_n=\underbrace{p_{2^n-1}}_{1} \underbrace{p_{(2^n-1)+(2-1)}p_{(2^n-1)+(2^2-2)}}_{W'_1}\underbrace{p_{(2^n-1)+(2^2-1)}\cdots p_{(2^n-1)+(2^3-2)}}_{W'_2}$$
      $$\cdots \underbrace{p_{(2^n-1)+(2^{n-1}-1)}\cdots p_{(2^n-1)+(2^n-2)}}_{W'_{n-1}} \underbrace{p_{2^{n+1}-2}}_{0}.$$
      
      Since by Lemma \ref{l1}  $p_{(2^n-1)+k}=p_k$ for $k< 2^n-1$, we obtain that $W'_i=W_i$, for $1\leq i \leq n-1$.
     \end{proof}
        
Since the subword $W_n$ ends with $0$, we can define $U_n$ by $W_n:=U_n0$, for every $n \geq 1$. Thus,  $U_1=1$, $U_2=110$.

    \begin{lemma}\label{U_n}    For every $n\geq1$, we have  $U_{n+1}=U_nU_n0$.
      \end{lemma}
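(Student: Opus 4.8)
The plan is to read off the recursion directly from Lemma \ref{concaten}, which already encodes the concatenation structure of the blocks $W_n$; the only thing to do is to transcribe it in terms of the $U_n$. Since $W_n = U_n 0$ by definition, Lemma \ref{concaten} states that $U_n 0 = 1 W_1 W_2 \cdots W_{n-1} 0$ for every $n \geq 2$. Cancelling the common trailing letter $0$ gives the cleaner identity
$$U_n = 1 W_1 W_2 \cdots W_{n-1}, \qquad n \geq 2,$$
which also holds for $n=1$ under the convention that the empty product $W_1 \cdots W_0$ contributes nothing, so that $U_1 = 1$.

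Next I would write the analogous identity one index higher. Applying Lemma \ref{concaten} with index $n+1$ (legitimate for all $n \geq 1$, since then $n+1 \geq 2$) and again removing the trailing $0$ yields $U_{n+1} = 1 W_1 W_2 \cdots W_n$. Now I split off the final block: the prefix $1 W_1 \cdots W_{n-1}$ is precisely $U_n$ by the identity above, whence $U_{n+1} = U_n W_n$. Substituting $W_n = U_n 0$ gives $U_{n+1} = U_n U_n 0$, which is the claim.

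There is no genuine obstacle here: once Lemma \ref{concaten} is in hand, the statement reduces to a one-line rearrangement of the factorization. The only point requiring a little care is the base case $n=1$ (equivalently, the empty-product convention used above), which I would simply verify by inspection: since $U_1 = 1$, one has $U_1 U_1 0 = 110 = U_2$, so the recursion holds at $n=1$ as well.
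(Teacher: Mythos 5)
Your proof is correct and follows essentially the same route as the paper: both extract the identity $U_n = 1W_1W_2\cdots W_{n-1}$ from Lemma \ref{concaten}, apply it at index $n+1$, and recognize the prefix $U_n$ to get $U_{n+1} = U_nW_n = U_nU_n0$. The only difference is that you spell out the cancellation of the trailing $0$ and check the base case $n=1$ explicitly, which the paper leaves implicit.
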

        
      \begin{proof}  By Lemma \ref{concaten}, $U_n=1W_1W_2\cdots W_{n-1}$ for all $n\geq 2$.  Consequently:
       $$U_{n+1}=\underbrace{1W_1W_2\cdots W_{n-1}}_{U_n}W_n=U_nW_n=U_n\underbrace{U_n0}_{W_n}.$$
     \end{proof}

   %The infinite word $\s$ can be factorized as:$$\s=U_n\underbrace{ U_n0}_{w_n} \underbrace{U_nU_n00 }_{w_{n+1}}\underbrace{U_nU_n0U_nU_n000}_{w_{n+2}} \cdots.$$ Moreover, $V_n=0w_1 w_2\cdots w_{n-1}$. It is different from $U_n$ only by the first letter. Up to  one word,  we may assume that $V_n=U_n$ and so the word $\s$ may be written  as follows:$$\s=U_n\underbrace{ U_n0}_{w_n} \underbrace{U_nU_n00 }_{w_{n+1}}\underbrace{U_nU_n0U_nU_n000}_{w_{n+2}} \cdots.$$

  \begin{lemma}\label{plage}

 For every $n\geq 2$, there exists a word $Z_n$ such that $W_n=1Z_n10^n$ and   $0^n \ntriangleleft Z_n$ (in other words  $W_n$ ends with exactly  $n$ zeros and $Z_n$ does not contain blocks of $0$ of length larger than $n-1$). This is equivalent to say that  $U_n=1Z_n1 0^{n-1}$ and $0^n \ntriangleleft Z_n$.
\end{lemma}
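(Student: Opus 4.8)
\emph{The plan is to argue by induction on $n$, feeding on the recursion $U_{n+1}=U_nU_n0$ of Lemma \ref{U_n}.} Since $W_n=U_n0$, the two formulations in the statement are equivalent: from $U_n=1Z_n10^{n-1}$ one gets $W_n=U_n0=1Z_n10^{n-1}0=1Z_n10^{n}$, and the condition $0^n\ntriangleleft Z_n$ is the same in both. I will therefore carry the induction on the cleaner $U_n$-form, proving that for every $n\geq2$ there is a word $Z_n$ with $U_n=1Z_n10^{n-1}$ and $0^n\ntriangleleft Z_n$. For the base case $n=2$ I would simply take $Z_2=\varepsilon$: then $1Z_210^{1}=110=U_2$, and $0^2\ntriangleleft\varepsilon$ trivially.

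For the inductive step I would assume $U_n=1Z_n10^{n-1}$ with $0^n\ntriangleleft Z_n$ and substitute this into Lemma \ref{U_n}, which after regrouping reads
$$U_{n+1}=U_nU_n0=(1Z_n10^{n-1})(1Z_n10^{n-1})0=1\bigl(Z_n10^{n-1}1Z_n\bigr)10^{n}.$$
This immediately suggests the definition $Z_{n+1}:=Z_n10^{n-1}1Z_n$, for which the displayed identity becomes exactly $U_{n+1}=1Z_{n+1}10^{n}$, the required structural form at level $n+1$. So the decomposition itself falls out of the recursion with no real work.

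The only point that genuinely needs care — and which I expect to be the main obstacle — is verifying $0^{n+1}\ntriangleleft Z_{n+1}$, i.e.\ that the concatenation does not create an overlong run of zeros at its seams. Here the key observation is that in $Z_{n+1}=Z_n\,1\,0^{n-1}\,1\,Z_n$ the two explicit letters $1$ act as separators, so no maximal block of zeros can straddle the junctions between the three pieces. Consequently every maximal zero-run of $Z_{n+1}$ lies entirely inside one of the two copies of $Z_n$, hence has length at most $n-1$ by the inductive hypothesis, or else equals the central block $0^{n-1}$ of length $n-1$; in all cases its length is at most $n-1<n+1$. This gives $0^{n+1}\ntriangleleft Z_{n+1}$ and closes the induction. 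I would make the separator argument explicit, since it is the substantive step, while treating the algebraic unwinding of Lemma \ref{U_n} as routine.
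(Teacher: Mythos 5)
Your proof is correct and follows essentially the same route as the paper: induction via the recursion $U_{n+1}=U_nU_n0$ of Lemma \ref{U_n}, with the same choice $Z_{n+1}:=Z_n10^{n-1}1Z_n$. The only difference is cosmetic — you carry the induction on the $U_n$-form and spell out the separator argument for the zero-runs, which the paper compresses into the single sentence ``Since $0^n \ntriangleleft Z_n$, then $0^{n+1} \ntriangleleft Z_{n+1}$.''
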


\begin{proof} We  argue by induction on $n$.

 %For  $n=1$: $W_1=10$. It ends with only one zero and  obviously there are no other zeros. 
 
For $n \geq 2$, $W_2= 1100$ ends with two zeros and obviously there are no other zeros.  

We assume that  $W_n$ ends with  $n$ zeros  and  does not contain other block of zeros of length greater than $n-1$. We  show this statement holds for  $n+1$.
By Lemma \ref{U_n} $$W_{n+1}=U_{n+1}0=U_nU_n00.$$
 
As  $U_n$ ends exactly with  $n-1$ zeros (by induction hypothesis), then $W_{n+1}$ ends also by $n+1$ zeros. Since we have that $U_n=1Z_n1 0^{n-1}$ and  $0^n \ntriangleleft Z_n$ then $W_{n+1}=1Z_n1 0^{n-1}1Z_n1 0^{n-1}00=1Z_{n+1}0^{n+1}$, when $Z_{n+1}:=Z_n 1 0^{n-1}1 Z_n$. Since $0^n \ntriangleleft Z_n$, then $0^{n+1} \ntriangleleft Z_{n+1}$. 
This completes the proof.
 \end{proof}

  \bigskip

%Ainsi pour chaque $m$ fixe on decompose le mot infini principal $\s$ en fonction de $n$ fixe aussi.\\

\begin{lemma}\label{alphabet} For every $n \geq 1$, let  $A_n:=\{U_n^20^k, \, k \geq 1\}$. Then $\s\in A_n^{\mathbb{N}}$. 
\end{lemma}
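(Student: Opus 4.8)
The plan is to show that the infinite word $\s$ decomposes as an infinite concatenation of blocks, each of the form $U_n^2 0^k$ with $k \geq 1$. The natural tool is Lemma \ref{U_n}, which gives $U_{n+1} = U_n U_n 0 = U_n^2 0$, and more generally I would like to peel off copies of $U_n^2 0^k$ from the front of $\s$ repeatedly. First I would recall the factorization $\s = W_0 W_1 W_2 \cdots$ and rewrite it using $U_n$: since $W_m = U_m 0$ for every $m \geq 1$, and since by Lemma \ref{U_n} the words $U_m$ grow by the rule $U_{m+1} = U_m U_m 0$, I can try to re-group the tail of $\s$ starting at the block $W_n$ so that it becomes a sequence of blocks each beginning with the fixed prefix $U_n^2$.

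The key step is an iteration. By Lemma \ref{U_n}, $U_{n+1} = U_n^2 0$, so $W_{n+1} = U_{n+1} 0 = U_n^2 0^2$, which is already of the desired shape $U_n^2 0^k$ with $k = 2$. For the next block I would compute $W_{n+2} = U_{n+2} 0 = U_{n+1}^2 0^2 = (U_n^2 0)(U_n^2 0) 0^2$; this is a concatenation of $U_n^2 0$ and $U_n^2 0^2$, i.e.\ two blocks from $A_n$. The strategy is therefore to prove by induction on $m \geq n$ that each $W_m$ factors as a concatenation of finitely many words of the form $U_n^2 0^k$. The inductive step rests on the identity $U_{m+1} = U_m^2 0$ applied repeatedly down to level $n$: expanding $U_m$ in terms of $U_n$ produces a string in which every maximal run that is not part of a $U_n$-copy is a block of zeros, so that one can cut $W_m$ precisely after each occurrence of $U_n^2$ followed by its trailing zeros. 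I would make this precise by showing that when $U_m$ (for $m > n$) is fully expanded via the rule $U_{i+1} = U_i^2 0$, each copy of $U_n$ is immediately followed by another copy of $U_n$ and then a nonempty block of zeros before the next copy of $U_n$ begins; this is exactly the assertion that the expansion lies in $A_n^*$.

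The initial segment $W_0 W_1 \cdots W_{n-1} = U_n$ (using Lemma \ref{concaten}, which gives $U_n = 1 W_1 \cdots W_{n-1}$) must also be absorbed, so I would instead start the grouping from the prefix $U_n$ of $\s$ and note that $\s = U_n \cdot (\text{tail starting at } W_n)$. Since $W_n = U_n 0$, the concatenation $U_n \cdot W_n = U_n U_n 0 = U_n^2 0$ is itself a block of $A_n$, and the remaining tail $W_{n+1} W_{n+2} \cdots$ is handled by the induction above. Combining the leading block with the inductive decomposition of the tail yields $\s \in A_n^{\mathbb{N}}$.

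The main obstacle I anticipate is verifying cleanly that no block $U_n^2 0^k$ straddles the boundary incorrectly, i.e.\ that after expanding each $W_m$ into $U_n$-copies and zeros, every copy of $U_n$ really is paired with an immediately following copy of $U_n$ rather than being separated by zeros. This is where Lemma \ref{plage} is useful: it tells us $U_n = 1 Z_n 1 0^{n-1}$ with $0^n \ntriangleleft Z_n$, so a copy of $U_n$ always ends in exactly $n-1$ zeros and begins with $1$, which lets me recognize block boundaries unambiguously and confirm that consecutive $U_n$'s are adjacent within each $U_{i+1} = U_i^2 0$ expansion. Carefully tracking these trailing-zero counts through the induction is the delicate part, but it is essentially bookkeeping once the recursive structure $U_{m+1} = U_m^2 0$ is exploited.
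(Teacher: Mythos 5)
Your proposal is correct and takes essentially the same route as the paper: factor $\s = U_n \cdot W_n W_{n+1} \cdots$ via Lemma \ref{concaten}, absorb the leading prefix into $U_n W_n = U_n^2 0$, and induct on the recursion $U_{m+1} = U_m U_m 0$ to decompose every subsequent block over $A_n$ (the paper phrases the induction as $U_{n+k} = U_n^2 0^{k_1} U_n^2 0^{k_2} \cdots U_n^2 0^{k_r}$, whose inductive step is immediate by concatenating two copies and appending one zero). Two minor points: $(U_n^2 0)(U_n^2 0)0^2$ equals $U_n^2 0 \cdot U_n^2 0^3$, not $U_n^2 0 \cdot U_n^2 0^2$; and your anticipated difficulty about blocks straddling boundaries (and the appeal to Lemma \ref{plage}) is unnecessary, since the induction constructs an explicit decomposition and no uniqueness or boundary recognition is ever required.
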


\begin{proof} Let $n\geq 1$. By definition of $W_n$ and $U_n$ and by Lemma \ref{concaten}, the infinite word $\s$ can be factorized as: \begin{equation}\label{conc}\s=\underbrace{1W_1W_2\cdots W_{n-1}}_{U_n}\underbrace{ U_n0}_{W_n} \underbrace{U_{n+1}0 }_{W_{n+1}}\underbrace{U_{n+2}0}_{W_{n+2}} \cdots.\end{equation}

  We   prove that for every positive integer $k$, there exist a positive integer $r$ and $k_1, k_2,\ldots, k_r \in \N^*$ such that: 
   \begin{equation}\label{U_n+k} U_{n+k}=U_n^20^{k_1}U_n^20^{k_2}\cdots U_n^20^{k_r}.\end{equation}
   
 We argue by induction on $k$. For $k=1$, we have  $U_{n+1}=U_nU_n0=U_n^20$. We suppose that the relation  (\ref{U_n+k}) is true for $k$ and we  show it for  $k+1$. By Lemma \ref{U_n}: $$U_{n+k+1}=U_{n+k}U_{n+k}0=U_n^20^{k_1}U_n^20^{k_2}\cdots U_n^20^{k_r}U_n^20^{k_1}U_n^20^{k_2}\cdots U_n^20^{k_r+1}.$$ 
By equality (\ref{conc}), this ends the proof.
\end{proof}

\bigskip
Fix $m\in \N$. Then,  there is a unique integer $n$ such that: 
\begin{equation}\label{n_m} 2^{n-1}< m\leq 2^n.
\end{equation}

\begin{lemma}\label{prefix} Let $m \in \mathbb{N}$. All distinct words of length $m$ of $\s$ occur in the prefix:

$$P_m=W_0W_1\cdots W_m.$$
\end{lemma}

\begin{proof}

Let $m,n$ be some positive integer satisfying  \ref{n_m}.

%Then, obviously, there is a unique integer $n$ such that: \begin{equation}\label{pref}2^{n-1}< m\leq 2^n.\end{equation}

%Also, by  lemma \ref{concaten}, $U_n=1W_1\cdots W_{n-1}$.
We show that all distinct words of length $m$ occur in the prefix $$P_m=W_0W_1W_2\cdots W_{n-1}W_n\cdots W_m=U_n \underbrace{U_n0}_{W_n}\underbrace{U_n U_n00 }_{W_{n+1}}\underbrace{U_n U_n0U_n U_n000}_{W_{n+2}} \cdots W_{m};$$
the second identity follows by Lemmas \ref{concaten} and \ref{U_n}.

Notice that we have to consider all the words till $W_m$ because the word $0^m$ first occurs in $W_m$.

Also, by Lemma \ref{plage} and using the identity (\ref{U_n+k}), $W_i$ ends with $U_n U_n0^{i-n+1}$, for every $i\geq n+1$.  Consequently, all the words $U_n U_n0^k$, $ 0\leq k \leq m-n+1$ are factors of $P_m$.

 Moreover, notice that if $B_n:=\{U_n U_n0^k, 0\leq k \leq m-n+1 \}$, then $P_m \in B_n^*$. This follows from Lemmas \ref{alphabet} and \ref{plage} (there are no blocks of zeros of length  greater than $m$ in $W_0W_1\cdots W_m$).

After the occurrence of $W_m$, it is not possible to see new different subwords  of length $m$.
Indeed, suppose that there exists a word $F$ of length $m$ that occur in $W_{m+1}W_{m+2}W_{m+3}\cdots$ and does not occur in $P_m$. Then, by Lemma \ref{alphabet} and by the remark above,   $F$ must occur in the words $U_n0^k U_n$, with $k\geq m-n+1$. But since $U_n$ ends with  $n-1$ zeros (by Lemma \ref{plage}), $F$ must be equal to $0^m$ or  $0^iP_i$, where $i\geq m-n+2$ and $P_i \prec_p U_n$, or $F$ must occur in $U_n$. But all these words already occur in $P_m$. This contradicts our assumption.
\end{proof}

%Our goal is next to find all distinct words of length $m$ occurring in $\s$. By Lemma \ref{prefix}, it suffices to look in the prefix $$P_m=U_n \underbrace{U_n 0}_{W_n}\underbrace{U_n U_n00 }_{W_{n+1}}\underbrace{U_n U_n0U_n U_n000}_{W_{n+2}} \cdots W_{m}.$$

\subsubsection{An upper  bound for $p( \frac{1}{\Pi_2},m)$}

In this part we  prove the following result.
\begin{prop}\label{upper}  $$p(\s,m)\leq \frac{(m-\log m)(m+\log m+2)}{2}+2m.$$
 \end{prop}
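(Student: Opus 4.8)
The plan is to bound the number of distinct factors of length $m$ by exploiting the structure established in the preceding lemmas: by Lemma \ref{prefix}, every length-$m$ factor of $\s$ already appears in the prefix $P_m=W_0W_1\cdots W_m$, and by Lemmas \ref{alphabet} and \ref{plage} this prefix decomposes over the alphabet $B_n=\{U_nU_n0^k,\ 0\le k\le m-n+1\}$, where $n$ is the unique integer with $2^{n-1}<m\le 2^n$. So the task reduces to counting how many distinct length-$m$ windows can straddle this highly constrained concatenation. First I would classify each length-$m$ factor $F$ by where it sits relative to the blocks $U_nU_n0^k$. Since $|U_n|=2^{n-1}\ge m/2$ by \eqref{n_m}, a window of length $m\le 2^n=2|U_n|$ can overlap at most a bounded number of consecutive blocks, which keeps the bookkeeping finite.

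The key counting step is to separate factors into two types: those lying entirely inside a single block $U_nU_n0^k$, and those crossing a block boundary. For factors crossing a boundary of the form $\cdots U_n0^k \mid U_nU_n0^{k'}\cdots$, the word $F$ is determined by a suffix of $0^k$ (equivalently by how many trailing zeros it begins with) together with a prefix of the following $U_nU_n$; because $U_n$ is a \emph{fixed} word, such an $F$ is completely determined by the pair (number $i$ of leading zeros, length of the remaining part), and these parameters range over roughly $m$ values each, giving a contribution of order $m^2/2$. The essential point is that all the ``$U_n0^k U_n$'' junctions have the same right part $U_n$, so the only freedom is the count of zeros seen at the start of the window and the truncation on the right — this is exactly the mechanism that produces the $\tfrac{1}{2}(m-\log m)(m+\log m+2)$ shape, since $\log_2 m \approx n$ controls how many of the small blocks $U_nU_n0^k$ with $k$ small actually occur, while $k$ can be as large as about $m$.

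The main obstacle, and where I would spend the most care, is getting the constants exactly right rather than merely proving an $O(m^2)$ bound. Concretely I expect the count to be organized as $\sum$ over the number $i$ of initial zeros of a window (with $i$ running up to about $m-n$) of the number of distinct continuations, each continuation being a prefix of $U_nU_n0^{\ast}U_n\cdots$ of the complementary length $m-i$; summing an arithmetic progression of lengths reproduces a quantity like $\sum_{i}(m-i)$, and the subtraction of $\log m\approx n$ terms comes from the fact that windows entirely inside $U_n$ (or overlapping only the leftmost blocks $W_0,\dots,W_{n-1}$, whose total length is about $2^n-1\approx m$) must be counted separately and contribute the lower-order and correction terms, absorbed into the additive $+2m$. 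I would therefore set up the precise inequality
\begin{equation}\nonumber
p(\s,m)\le \#\{\text{factors inside some }U_nU_n0^k\}+\#\{\text{factors crossing a boundary}\},
\end{equation}
estimate the first set by at most $m+(m-n)$ or so (essentially a linear term, using that $0^k$ for varying $k$ adds only one new factor per new zero), and estimate the second by the triangular sum described above, then collect terms. The delicate part is ensuring no factor is double-counted across the two cases and that the identification $n=\lfloor\log_2 m\rfloor+1$ converts the block-index bounds into the stated $\log m$ expression, so that the arithmetic-series total collapses precisely to $\tfrac{1}{2}(m-\log m)(m+\log m+2)+2m$.
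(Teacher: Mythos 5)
Your overall skeleton matches the paper's: reduce to the prefix $P_m$ via Lemma \ref{prefix}, use the block decomposition over $\{U_nU_n0^k\}$ coming from Lemmas \ref{alphabet} and \ref{plage}, split the length-$m$ windows into those internal to a block and those crossing a boundary, and end with a linear term plus a triangular sum. However, your key counting step has a genuine gap, in two related ways. First, your parametrization of a crossing window as ``(number $i$ of leading zeros, length of the remaining part)'' is degenerate: since the window has total length $m$, the remaining part is forced to have length $m-i$, and since $m-i\le m\le 2\left|U_n\right|$ that remaining part is the uniquely determined prefix of the \emph{fixed} word $U_nU_n$ of length $m-i$. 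So the family you describe contains only $O(m)$ words, not $\sim m^2/2$; the quadratic contribution you attribute to it comes from treating two dependent parameters as independent. (Incidentally, $\left|U_n\right|=2^n-1$, not $2^{n-1}$, though this is harmless.)

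Second, and this is the real omission, your classification misses the crossing windows that begin inside $U_n$ \emph{before} the terminal zeros of a block: words of the form $A0^kB$ where $A$ is a nonempty suffix of $U_n$ not consisting only of zeros, $B$ is a nonempty prefix of $U_n$, and $\left|A\right|+k+\left|B\right|=m$. Such a word is not of your form $0^iP$, and it is not a factor of any single block either. Concretely, with $n=2$ and $U_2=110$, the word $1001$ occurs across a junction $\cdots 1100 \mid 110\cdots$ but is neither a factor of a block $U_2U_20^k$ nor of the form $0^iP$. This genuinely two-parameter family (the cut position $\left|A\right|$ and the zero-block length $k$) is exactly what produces the quadratic term: the paper counts it through the sets $i(U_n,0^k,U_n)$, at most $m-k+1$ words for each $k$ with $1\le k\le m-n$, whence $\sum_{k=1}^{m-n}(m-k+1)=\frac{(m-n)(m+n+1)}{2}$, which together with the at most $2m$ factors coming from $U_nU_n$ and the inequality $n\ge \log_2 m$ gives the stated bound. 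Your two errors happen to compensate numerically (the spurious $m^2/2$ is about the size of the missing term), but as written the argument simultaneously overcounts a linear family and fails to cover all factors, so it would not become a valid proof without being reorganized into precisely this two-sided parametrization.
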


\begin{proof} %Let $m\in \N$. There is an unique $n\in \N$ such that $2^{n-1}<m\leq 2^n $. 

  In order to find  all different factors of length $m$ that occur in $\s$,  it suffices, by Lemmas \ref{alphabet} and \ref{prefix},  to consider factors appearing in the word  $U_n U_n$  and in  the sets  $i(U_n,0^k,U_n)$, where $1\leq k \leq m-n$. \\

In the word $U_n U_n$ we can find at most  $\left|U_n\right|$ distinct words of length $m$. Since $\left|U_n\right|= 2^n-1$ and  $2^{n-1}<m\leq 2^n $,  the number of factors of length $m$ that occur in $U_n U_n$ is  at most $2^{n}$, so at most $2m$.\\

Also, it is not difficult to see that $\left|i(U_n,0^k,U_n)\right| \cap {\mathcal A}^m \leq m-k+1$.
 %, it is possible to find at most $m-k+1$ new different subwords of length $m$, since (the words of length $m$ existing in the prefix of word $U_n$ have already been counted in $U_n U_n$). Indeed, we are interested in all words  of length $m$ of form $\alpha_k0^k \beta_k$, where $\alpha_k$ and $\beta_k$ are two words possible empty (more precisely, $\alpha_k \prec_s U_n$ and $\beta_k\prec _p U_n$). Thus,  we may find $m-k+1$ words that contain the block $0^k$.\\
The total number of subwords occurring in all these sets, for $1\leq k \leq m-n$,  is  less than or equal to:
  $$\sum_{k=1}^{m-n}{(m-k+1)}= (m+1)(m-n)-\frac{(m-n)(m-n+1)}{2}.$$
  
Counting all these words and using the fact that $2^{n-1}<m\leq 2^n $, we obtain that:
$$p(\s,m)\leq  2m+\frac{(m-n)(m+n+1)}{2} < \frac{(m-\log m)(m+\log m+2)}{2}+2m$$ as claimed.
  \end{proof}

  \subsubsection{A lower bound for  $p( \frac{1}{\Pi_2},m)$}

In this part we  prove the following result.

\begin{prop}\label{lower}  $$p(\s,m) \geq\frac{(m-\log m)(m-\log m+ 1)}{2}.$$
\end{prop}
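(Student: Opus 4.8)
The plan is to exhibit, for a suitable range of run-lengths $\ell$, many pairwise distinct factors of length $m$ whose \emph{unique} longest block of zeros has length $\ell$, and then simply to count them. Fix $n$ as in (\ref{n_m}), so that $2^{n-1}<m\le 2^n$ and hence $n-1<\log m\le n$. The structural input I rely on is Lemma~\ref{plage}: every $U_n$ can be written $U_n=1Z_n10^{n-1}$ with $0^n\ntriangleleft Z_n$, so each maximal block of consecutive zeros strictly inside $U_n$ (that is, inside $1Z_n1$, once the trailing $0^{n-1}$ is removed) has length at most $n-1$.

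First I would locate the long zero-blocks. By Lemma~\ref{U_n} and the expansion (\ref{U_n+k}), $U_m=U_{n+(m-n)}$ factorizes as $U_n^2 0^{k_1}U_n^2 0^{k_2}\cdots$, and a short induction on $k$ shows that the set of exponents occurring in $U_{n+k}$ is exactly $\{1,2,\dots,k\}$. Since each left copy of $U_n$ in a block ends in $0^{n-1}$ while the following copy begins with $1$, between two consecutive blocks $U_n^2$ separated by $0^{k_i}$ there sits a maximal block of zeros of length exactly $(n-1)+k_i$. Letting $k_i$ range over $\{1,\dots,m-n\}$, and adding the block $0^m$ that terminates $W_m$ (Lemma~\ref{plage}), I obtain that for every $\ell\in\{n,n+1,\dots,m\}$ the word $\s$ contains a maximal block $0^\ell$ preceded and followed by the letter $1$, with a full copy of $U_n$ available on each side.

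Next I would build and count the factors. Fix $\ell\in\{n,\dots,m\}$ and such a maximal block $0^\ell$; for each $r$ with $0\le r\le m-\ell$ let $F_{\ell,r}$ be the factor of length $m$ obtained by taking $r$ letters to the left of the block and $m-\ell-r$ letters to the right. The letters to the left form a suffix of $1Z_n1$ (hence end in $1$ when $r\ge1$) and those to the right form a prefix of $U_n$ (hence begin with $1$ when $m-\ell-r\ge1$); in all cases, by Lemma~\ref{plage}, every \emph{other} zero-block inside $F_{\ell,r}$ has length at most $n-1<\ell$. Therefore $0^\ell$ is the unique longest zero-block of $F_{\ell,r}$, and reading off its length and its starting offset recovers $(\ell,r)$, so $(\ell,r)\mapsto F_{\ell,r}$ is injective. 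The number of admissible pairs is
$$\sum_{\ell=n}^{m}(m-\ell+1)=\frac{(m-n+1)(m-n+2)}{2},$$
and since $n-1<\log m$ gives $m-n+1>m-\log m$, this quantity is at least $\tfrac{(m-\log m)(m-\log m+1)}{2}$, the claimed bound.

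The main obstacle is precisely the injectivity step: one must guarantee that $0^\ell$ is genuinely the unique longest zero-run of each $F_{\ell,r}$, so that both $\ell$ and the offset $r$ can be read back unambiguously. This is exactly what the hypothesis $\ell\ge n$, combined with the ``no long internal zero-blocks'' property of Lemma~\ref{plage}, secures, and it is also what forces the restriction to $\ell\ge n$ (the short blocks, of length below $n$, are not uniquely recoverable and must be discarded). The remaining verifications — that all exponents $1,\dots,m-n$ really occur, and that enough of $U_n$ sits on either side of each long block to realize every offset $r\le m-\ell\le m-n\le 2^n-n=\left|1Z_n1\right|$ — are routine consequences of the factorization (\ref{U_n+k}) and of $\left|U_n\right|=2^n-1$.
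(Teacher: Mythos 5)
Your proposal is correct and is essentially the paper's own argument: both proofs count, for each block-length $\ell\in\{n,\dots,m\}$, the $m-\ell+1$ factors of length $m$ consisting of a full maximal zero-block $0^\ell$ together with some context on either side, both argue pairwise distinctness by the fact that the long block (hence also its offset inside the factor) is identifiable, and both arrive at the same triangular sum $\frac{(m-n+1)(m-n+2)}{2}$ before invoking $2^{n-1}<m\le 2^n$. One harmless slip in your localization step: inside $U_m$ the exponent $k_i=m-n$ occurs only in \emph{final} position, where its zeros merge with the terminal $0^m$ of $W_m$ rather than sitting between two $U_n^2$ blocks, so the maximal block of length exactly $m-1$ must instead be taken at the junction $U_mU_m$ inside $U_{m+1}$ (equivalently, at the boundary between $W_{m-1}$ and $W_m$, which is where the paper finds it); its two associated factors $10^{m-1}$ and $0^{m-1}1$ contain no other zeros, so your injectivity argument and the final count are unaffected.
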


\begin{proof}
 By Lemma \ref{prefix}, we have to look for distinct  words of length $m$ occurring in $W_n W_{n+1}\cdots W_m$.
 
 % More precisely, we shall count the total number of factors of form $\alpha_k0^k\beta_k$ ($\alpha_k$ and $\beta_k$  being two words, possibly empty) that belong to  $i(W_k, \varepsilon,   W_{k+1})$ for every  integer $k\in \left[n ,m-1\right]$. \\                   
 
 In order to prove this proposition, we use the final blocks of $0$ from each $W_i$. These blocks are increasing (as we have shown in Lemma \ref{plage}). First, in the word $W_m$ we  find for the  first time the word of length $m$: $0^m$.\\
 
  In the set $i(W_{m-1}, \varepsilon, W_m)$, we find  two  distinct  words of length $m$ that cannot be seen before ($10^{m-1}$ and $0^{m-1}1$) since there are no other words containing blocks of zeros of length $m-1$ in $i(W_k,\varepsilon, W_{k+1})$, for $k<m-1$. \\
  
  %For $n\leq k \leq m-2$, we count all the words of length $m$ that contain the block $10^k1$.
   More generally, fix $k$ such that $n\leq k \leq m-2$.  Since   $$W_kW_{k+1}= \underbrace{1Z_k10^k}_{W_k}\underbrace{1 Z_{k+1}10^{k+1}}_{W_{k+1}},$$ in $i(W_k,\varepsilon, W_{k+1})$ we find $m-k+1$ words of length  $m$ of form $\alpha_k0^k\beta_k$. More precisely, the words we count here are the following: $S_{m-k-1}10^k$, $S_{m-k-2}10^k1$, $S_{m-k-3}10^k1T_1$,..., $S_10^k1T_{m-k-2}$, $0^k1T_{m-k-1}$, where $S_i\prec_s Z_{k}$ and $T_i\prec_p Z_{k+1}$, $\left|S_i\right|= \left|T_i\right|=i$, for every integer $i$,  $1 \leq i \leq m-k-1$.\\
   
\ All these words cannot be seen before, that is in $i(W_s,\varepsilon, W_{s+1})$, for $s<k$, since there are no blocks of zeros  of length $k$ before the word $W_k$ (according to Lemma \ref{plage}).  
Also,  in  $i(W_s, \varepsilon, W_{s+1})$, for $s>k$, we focus on the words $\alpha_s0^s\beta_s$ and hence they are different from all the words seen before (because $k<s$).\\
  
 Consequently, the total number of subwords of length $m$ of form $\alpha_k0^k\beta_k$ considered before,  is  equal to $$1+2+\ldots + (m-n+1)=\frac{(m-n+1)(m-n+ 2)}{2}.$$\\ 
Since $2^{n-1}<m\leq 2^n$ we obtain the desired lower bound.
\end{proof}
\bigskip

\begin{proof}[Proof of Theorem \ref{case2}] It follows from Propositions \ref{upper} and \ref{lower}. 
\end{proof} 
 
A consequence of Theorem \ref{case2} and Theorem \ref{algebraic} is the following result of transcendence.
 
   \begin{coro}\label{transc} Let $K$ be a finite field and $(p_n^{(2)})_{n\geq 0}$ the sequence defined in (\ref{definit}). Let us  consider the associated formal series over $K$:  $$f(T):=\sum_{n\geq 0}{p_n^{(2)}T^{-n}}\in K[[[T^{-1}]].$$ Then $f$ is transcendental over $K(T)$. \end{coro}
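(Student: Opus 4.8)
The plan is to argue by contradiction, playing the quadratic lower bound of Theorem \ref{case2} against the linear upper bound that Theorem \ref{algebraic} imposes on algebraic Laurent series. The crucial preliminary observation is that the subword complexity $p(f,m)$ is a combinatorial invariant of the \emph{abstract} sequence of coefficients $(p_n^{(2)})_{n\geq 0}$, regarded as a word over the two-letter alphabet $\{0,1\}$, and does not depend on the finite field $K$ in which we choose to read these coefficients.

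To make this precise, I would note that the values $p_n^{(2)}$ lie in $\{0,1\}$ by (\ref{definit}), and that $0\neq 1$ in every field $K$; hence the two symbols stay distinct under the canonical embedding $\{0,1\}\hookrightarrow K$. Moreover, the defining rule (\ref{definit}) only records whether $n$ admits a decomposition $n=\sum_{j\in J}(2^j-1)$ with $J\subset \N^*$, a purely arithmetic condition on the index $n$ that is insensitive to $K$. Therefore the positions of the letters $0$ and $1$ in $\s=(p_n^{(2)})_{n\geq 0}$, and hence the set of its factors of each length $m$, are the same over any $K$. Consequently $p(f,m)=p(\s,m)$ for every finite field $K$, and Theorem \ref{case2} gives $p(f,m)=\Theta(m^2)$.

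Finally I would invoke the contrapositive of Theorem \ref{algebraic}. Writing $K=\F_q$ (every finite field is of this form, for a suitable prime power $q$), if $f$ were algebraic over $K(T)=\F_q(T)$ then Theorem \ref{algebraic} would force $p(f,m)=O(m)$. This contradicts $p(f,m)=\Theta(m^2)$, since $m^2/m\to\infty$, so $f$ must be transcendental over $K(T)$. The only point requiring genuine care is the field-independence asserted in the second step: one must check that none of the combinatorial lemmas underlying the count in Theorem \ref{case2} (Lemmas \ref{l1}, \ref{concaten}, \ref{U_n}, \ref{plage}) secretly uses cancellation or arithmetic in $\F_2$. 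Inspecting them, they manipulate only the index sets $J\subset\N^*$ and the factorisation $\s=W_0W_1W_2\cdots$, so the entire argument transfers verbatim to an arbitrary $K$, and the corollary follows.
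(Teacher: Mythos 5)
Your proof is correct and takes essentially the same route as the paper, which derives the corollary in one line from Theorem \ref{case2} (the quadratic complexity of $\s$) together with Theorem \ref{algebraic} (the linear upper bound for algebraic series). Your explicit verification that $p(f,m)=p(\s,m)$ is a field-independent combinatorial quantity is exactly the observation the paper leaves implicit in order to pass from the $\F_2$ statement of Theorem \ref{case2} to an arbitrary finite field $K$.
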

  
  Notice that, if $K=\F_2$ then the formal series $f$ coincide with $1/\Pi_2$ and hence Corollary \ref{transc} implies Corollary \ref{pi_2}.
  
  \begin{rem} In  \cite{allouche_betrema_shallit}, the authors proved that the sequence $\s$ is the fixed point of the morphism $\sigma$ defined by $\sigma(1)=110$ and $\sigma (0)=0$. 
  
  In an unpublished note \cite{allouche_unpublished}, Allouche showed that the complexity of the sequence $\s$ satisfies, for all $m\geq 1$, the following inequality:
  $$p(\s,m)\geq C m\log m,$$
  for some strictly positive constant $C$.
  
  \end{rem}

  \subsection{Proof of Theorem \ref{caseq}} In this part we  study the sequence $\p =(p_n^{(q)})_{n\geq 0}$ defined by the formula (\ref{definition}) in the case where $q \geq 3$. In the following, we will consider the case  $q=p^n$, where $p\geq 3$.
% If $p=2$,  the only difference is that $-1$ has to be identified with $1$. The sequence will be simpler and  consequently the complexity will be lower.

\begin{prop} \label{upper2} Let $q \geq 3$. For every positive integer $m$: $$ p(\p,m)\leq (2q+4)m+ 2q-3.$$
\end{prop}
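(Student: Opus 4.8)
The plan is to mirror the structure of the $q=2$ argument but to track the fact that for $q\ge 3$ the "digits'' $p_n^{(q)}$ can take the $q$ values $0,\pm1,\dots$ (in fact $\pm 1$ and $0$) and, more importantly, that the self-similar structure produces factors that are essentially bounded-length perturbations sitting between long blocks of zeros. First I would set up the analog of the block decomposition: define $W_n$ to be the factor of $\mathfrak{p}_q$ occupying the positions corresponding to the sums $\sum_{j\in J}(q^j-1)$ with $\max J = n$, so that $|W_n|=q^n-q^{n-1}$ (or the appropriate length), and establish the exact analog of Lemma~\ref{l1}: for $k$ below the threshold $q^n-1$, the index $k$ admits a decomposition $\sum_{j\in J}(q^j-1)$ if and only if $k+(q^n-1)$ does, with the same sign $(-1)^{\operatorname{card}J}$. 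The only delicate point here, compared with $q=2$, is the uniqueness-of-decomposition claim already recorded after (\ref{definition}); I would lean on it to guarantee that adding the index $n$ to $J$ is the \emph{only} way to realize $k+(q^n-1)$, which forces $n\in I$ exactly as in the base-$2$ case. This gives a concatenation lemma expressing $W_n$ in terms of the earlier $W_i$'s together with interspersed powers of $T^{-1}$ carrying zero coefficients.

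Next I would prove the structural heart: that $\mathfrak{p}_q$ is built from a fixed finite "alphabet'' of short nonzero patterns separated by runs of zeros whose lengths grow in a controlled, eventually strictly increasing way (the analog of Lemma~\ref{plage}, showing each $W_n$ terminates in a run of roughly $n$ zeros and contains no longer internal run). Concretely, I expect that there is a bounded-size set $\mathcal{S}$ of nonzero words (its cardinality governed by $q$, which is where the factor $2q$ in the bound will originate) such that every factor of $\mathfrak{p}_q$ of length $m$ is of the form: a suffix of some $s\in\mathcal S$, followed by a block $0^k$, followed by a prefix of some $s'\in\mathcal S$, or else it lies entirely inside $0^m$ or inside one of the bounded patterns. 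I would then invoke the prefix-localization lemma (the analog of Lemma~\ref{prefix}) to confine attention to $W_0W_1\cdots W_m$.

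The counting then proceeds exactly as in Proposition~\ref{upper}, but the key difference driving the \emph{linear} rather than quadratic bound is that, for $q\ge 3$, the runs of zeros are long relative to the nonzero patterns in a way that makes the number of distinct "boundary words'' of length $m$ at each scale bounded by a constant depending only on $q$, rather than growing with $k$. I would bound the number of factors contributed by the patterns $U_n U_n$ (or its $q$-analog) by $O(m)$, exactly as before; then for each admissible run-length $k$ I would bound $|i(U_n,0^k,U_n)\cap\mathcal A^m|$ by a \emph{constant} multiple of the number of nonzero patterns, i.e.\ by something like $2q$, because once the central $0^k$ is fixed there are only boundedly many choices of flanking suffix/prefix from $\mathcal S$ consistent with the self-similar structure. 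Summing these constant contributions over the $O(m)$ relevant values of $k$ yields the claimed $(2q+4)m+2q-3$.

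The main obstacle I anticipate is precisely this constancy claim: for $q=2$ the set $i(U_n,0^k,U_n)$ contributed $m-k+1$ words (linearly many), which is what produced the quadratic total, whereas for $q\ge3$ I must show the contribution collapses to $O(q)$. This hinges on showing that the nonzero parts of $\mathfrak{p}_q$ do \emph{not} themselves contain long internal runs of zeros that would let a length-$m$ window straddle a boundary in many genuinely distinct ways — in other words, that the "interior'' words $Z_n$ are, up to finitely many shapes, rigid for $q\ge 3$ because the gaps $q^{j}-q^{j-1}$ between successive representable indices grow fast enough. Pinning down the exact finite alphabet of nonzero patterns and proving it is closed under the self-similar substitution (the step where the signs $(-1)^{\operatorname{card}J}$ must be verified to repeat consistently) is the technical crux; everything after that is the same arithmetic as in Proposition~\ref{upper}, and I would present the final inequality by an explicit summation over $k$ analogous to the displayed computation there.
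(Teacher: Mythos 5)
Your structural skeleton (the sign-flip recurrence $p_k=-p_{k+(q^n-1)}$, the concatenation lemma, and the factorization of $\p$ over an alphabet of patterns and zero blocks) matches the paper's Lemmas \ref{l3}, \ref{concate} and \ref{alphab}. But the counting step has a genuine gap, and you flagged it yourself: the ``constancy claim'' that $\left|i(U_n,0^k,U_n)\cap\mathcal{A}^m\right|$ is bounded by a constant of order $q$ for each admissible run length $k$ is false as stated, and it is also not what drives linearity. For a zero run of length $k<m-1$ genuinely occurring between two patterns, the straddling words $A0^kB$ ($A$ a nonempty suffix, $B$ a nonempty prefix) are in general pairwise distinct, so this set has about $m-k$ elements; this is exactly what produces the quadratic count when $q=2$ (indeed the paper's lower bound in Proposition \ref{lower} is proved from precisely these words), and no rigidity of the interior words rescues it. Relatedly, your description of the trailing zero run of $W_n$ as having length ``roughly $n$'' is the $q=2$ behavior; for $q\geq 3$ the paper shows $p_k=0$ for all $k\in\left[2(q^n-1),q^{n+1}-2\right]$, so the trailing run has length $\alpha_n=(q^{n+1}-1)-2(q^n-1)=q^n(q-2)+1$, which is exponentially long.

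The correct mechanism, and the paper's actual argument, is that these runs are so long that straddling never happens at all. By Remark \ref{rema}, $q\geq 3$ gives $\alpha_n\geq|U_n|=q^n-1$, and $(\alpha_i)_{i\geq 1}$ is increasing. Choosing the scale $n$ with $q^{n-1}-1\leq m<q^n-1$, every zero block separating occurrences of $U_n$ and $\widehat{U_n}$ in the factorization of Lemma \ref{alphab} has length at least $\alpha_n\geq q^n-1>m$, so a window of length $m$ can never meet two pattern occurrences separated by a zero block. Hence every factor of length $m$ occurs in one of just six words: $U_n\widehat{U_n}$, $\widehat{U_n}U_n$, $0^{\alpha_n}U_n$, $0^{\alpha_n}\widehat{U_n}$, $U_n0^{\alpha_n}$, $\widehat{U_n}0^{\alpha_n}$. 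Counting at most $2|U_n|-m+1$ factors in the first and at most $m$ or $m-1$ new ones in each of the others gives at most $2|U_n|+4m-3$; and the factor $2q$ in the final bound comes not from the cardinality of any alphabet of short patterns (there are only two patterns, $U_n$ and $\widehat{U_n}$, whose length grows with $n$) but from the scale ratio $|U_n|=q(q^{n-1}-1)+q-1\leq qm+q-1$, whence $p(\p,m)\leq 2(qm+q-1)+4m-3\leq(2q+4)m+2q-3$. Your plan of summing constant contributions over $O(m)$ values of $k$ cannot be repaired into this argument: the point is that there are no admissible values of $k\leq m$ at all.
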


 In particular, this proves the Theorem \ref{caseq}. Indeed, we do not have to find a lower bound for the  complexity function, as   the sequence $\p$ is not eventually periodic (see Remark \ref{notperiodiq}) and thus, by the inequality (\ref{morse_hed}) we have that:
 $$p(\p, m)\geq m+1,$$
 for any $m\geq 0$.
 
 \medskip
 
 In order to lighten the notations, we  set in the sequel $p_n:=p_n^{(q)}$ so that $\p=p_0p_1p_2\cdots$. 

For every $n \geq 1$, we denote by $W_n$ the factor of $\p$ defined in the following manner:
$$W_n:=p_{q^n-1}\cdots p_{q^{n+1}-2}.$$

        Let us fix $W_0:=0^{q-2}=\underbrace{00\cdots 0}_{q-2}$ and $\alpha_0: = q-2$. Thus $W_0=0^{\alpha _0}$.

 In other words,  $W_n$ is the factor of $\p$ occurring between positions  $q^n-1$ and $q^{n+1}-2$. Notice that  $\left|W_n\right|=q^n(q-1)$.\\
        
       With these notations the infinite word $\p$  may be factorized as follows: $$\p=1\underbrace{00\cdots0}_{W_0}\underbrace{(-1)00\cdots 0}_{W_1}\underbrace{(-1) \cdots 00 1 00\cdots 0}_{W_2}(-1)00\cdots.$$
 
In the following, we  prove some lemmas that we   use in order to bound from  above the complexity function of $\p$.
 \begin{lemma}\label{l2} Let $k$  and $n$ be two positive integers such that: $$k \in \left[2(q^n-1), q^{n+1}-2\right].$$  Then there is no set $J\subset \mathbb{N^*}$ such that  $k= \sum_{j\in J }{(q^j-1)}$. In other words,  $p_k=0$. \end{lemma}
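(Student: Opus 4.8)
The plan is to argue by contradiction, turning the question into a single elementary inequality on geometric sums. I would assume that $k$ lies in the interval $[2(q^n-1),\,q^{n+1}-2]$ and that nonetheless $k=\sum_{j\in J}(q^j-1)$ for some finite $J\subset\mathbb{N}^*$, and then show that this forces $k<2(q^n-1)$, contradicting the left endpoint.

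First I would exploit the right endpoint to bound the indices appearing in $J$. If $J$ contained any index $j\geq n+1$, then already $k\geq q^{n+1}-1>q^{n+1}-2$, contradicting $k\leq q^{n+1}-2$. Hence every index in $J$ is at most $n$, so $J\subseteq\{1,2,\dots,n\}$ and therefore
$$k=\sum_{j\in J}(q^j-1)\leq\sum_{j=1}^{n}(q^j-1).$$

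The crux is then to show that even this largest possible value stays below the left endpoint, namely that
$$\sum_{j=1}^{n}(q^j-1)<2(q^n-1)\qquad\text{for all }q\geq 3,\ n\geq 1.$$
Combining this with the previous display yields $k<2(q^n-1)$, the desired contradiction, and hence $p_k=0$. To establish the inequality I would evaluate the geometric sum as $\sum_{j=1}^{n}(q^j-1)=\frac{q(q^n-1)}{q-1}-n$, clear the positive denominator $q-1$, and simplify; the claim then reduces to the manifestly positive statement $(q-2)(q^n-1)+n(q-1)>0$, in which both terms are nonnegative and the second is strictly positive because $n\geq 1$.

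I do not expect a genuine obstacle here: the statement is a short counting estimate, and the only point requiring a little care is the bookkeeping of the linear term $-n$ produced by the $-1$'s when passing from the raw geometric sum to the final inequality. It is worth recording that the hypothesis $q\geq 3$ is what makes the interval $[2(q^n-1),\,q^{n+1}-2]$ a non-degenerate range of forbidden positions: its length is $q^n(q-2)$, which is positive exactly when $q\geq 3$. Thus the lemma genuinely excludes a whole block of indices from being representable, which is precisely the structural feature that will feed into the block decomposition of $\p$ and distinguishes the linear-complexity case $q\geq 3$ treated here from the borderline case $q=2$.
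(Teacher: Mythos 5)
Your proof is correct and follows essentially the same route as the paper's: argue by contradiction, use the upper endpoint $k\leq q^{n+1}-2$ to force $J\subseteq\{1,\dots,n\}$, and then show $\sum_{j=1}^{n}(q^j-1)=\frac{q(q^n-1)}{q-1}-n<2(q^n-1)$, contradicting the lower endpoint. The only difference is cosmetic: the paper asserts this last inequality without detail, while you justify it by reducing it to $(q-2)(q^n-1)+n(q-1)>0$.
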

 
 \begin{proof} We argue by contradiction and we assume that there exists a set $J$ such that  $k= \sum_{j\in J }{(q^j-1)}$. Since $k < q^{n+1}-1$, then obviously $J$ must be a subset of $ \{1,2, 3,\ldots, n\}$. Consequently 
 $$k= \sum_{j\in J }{(q^j-1)} \leq \sum_{j=1}^n{(q^j-1)}.$$
 Then
 $$k \leq \sum_{j=1}^n{(q^j-1)} =q\frac{q^{n}-1}{q-1}-n<2(q^n-1)<k$$
 which is absurd.  
 \end{proof}

\begin{lemma}\label{l3} Let $k$ and $n$ be two positive integers such that: $k<q^n-1$.
Then $k$ can be written as $\sum_{j\in J}{(q^j-1)}$ if and only if  $k+(q^n-1)$ can be written as  $\sum_{j\in I}{(q^j-1)}$, where $I$ and $J$ are finite subsets of $\N^*$.
Moreover, $J\cup \{n\}=I$. \end{lemma}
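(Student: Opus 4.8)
\textbf{Plan for the proof of Lemma \ref{l3}.}
The statement is the $q\geq 3$ analogue of Lemma \ref{l1}, so the plan is to mirror its proof while paying attention to the two new features: the coefficients $(-1)^{\mathrm{card}\,J}$ (which motivate the extra claim $J\cup\{n\}=I$) and the uniqueness of the decomposition stated after formula (\ref{definition}). First I would dispatch the easy direction exactly as in Lemma \ref{l1}: if $k=\sum_{j\in J}(q^j-1)$ with $J\subset\N^*$, then since $k<q^n-1$ the index $n$ cannot already lie in $J$ (every $j\in J$ contributes at least $q^j-1$, so $n\in J$ would force $k\geq q^n-1$), hence
\[
k+(q^n-1)=\sum_{j\in J}(q^j-1)+(q^n-1)=\sum_{j\in J\cup\{n\}}(q^j-1),
\]
which is a valid decomposition indexed by $I:=J\cup\{n\}$.

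For the converse I would argue, as in Lemma \ref{l1}, that if $k+(q^n-1)=\sum_{i\in I}(q^i-1)$ then $n\in I$, after which removing $n$ yields $k=\sum_{i\in I\setminus\{n\}}(q^i-1)$. The key numerical estimate is that $I$ contains no index exceeding $n$ (since $k<q^n-1$ gives $k+(q^n-1)<2(q^n-1)<q^{n+1}-1$, so no single term $q^{j}-1$ with $j\geq n+1$ can occur), and that if $n\notin I$ then
\[
\sum_{i\in I}(q^i-1)\leq \sum_{i=1}^{n-1}(q^i-1)=\frac{q^n-q}{q-1}-(n-1)<q^n-1,
\]
contradicting $k+(q^n-1)\geq q^n-1$. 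Hence $n\in I$, giving the desired $J:=I\setminus\{n\}$ and simultaneously the identity $J\cup\{n\}=I$. I expect this bounding step to be the only real point to get right; it is where the hypothesis $q\geq 3$ (so that $q^j-1$ grows fast enough) and the precise arithmetic of $\sum_{i=1}^{n-1}(q^i-1)$ enter, and it is essentially the same computation that already appears in the proofs of Lemmas \ref{l1} and \ref{l2}.

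The one genuinely new assertion is the uniqueness built into ``$J\cup\{n\}=I$'': I would note that this requires $J$ itself to be unique, which is exactly the remark following (\ref{definition}) that any decomposition $n=\sum_{j\in J}(q^j-1)$, when it exists, is unique. Granting that, the two maps $J\mapsto J\cup\{n\}$ and $I\mapsto I\setminus\{n\}$ are mutually inverse bijections between the decomposition of $k$ and that of $k+(q^n-1)$, so the correspondence is canonical and $J\cup\{n\}=I$ holds for the (unique) sets involved. The main obstacle, if any, is purely bookkeeping: making sure the ``no index exceeds $n$'' and ``$n$ must be present'' steps are stated cleanly, since everything else is a direct transcription of the $q=2$ argument.
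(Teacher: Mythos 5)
Your proof is correct and takes essentially the same route as the paper: the paper's own proof of Lemma \ref{l3} consists of the single remark that one should repeat the proof of Lemma \ref{l1} with $2$ replaced by $q$, which is exactly what you do, and your extra care (checking $n\notin J$ in the easy direction, and invoking uniqueness of the decomposition to justify $J\cup\{n\}=I$) only supplies details the paper leaves implicit. One minor quibble with your commentary: the bounding step does not actually use $q\geq 3$, since $\sum_{i=1}^{n-1}(q^i-1)<q^n-1$ holds for every $q\geq 2$ --- which is precisely why the paper can get away with ``just replace $2$ by $q$''.
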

 
 \begin{rem} This is equivalent to say that $p_k=-p_{k+(q^n-1)}$ for  $k$ and  $n$ two positive integers such that $k<q^n-1$.
\end{rem}
 
 \begin{proof} The proof is similar to that  of Lemma \ref{l1} (just replace $2$ by $q$).
 % If $n=\sum_{j\in J }{(q^j-1)}$ then $$n+(q^k-1)=\sum_{j\in J }{(q^j-1)}+(q^k-1)=\sum_{j\in J\cup \{k\}}{(q^j-1)}$$and the first part follows immediately. We will prove now the second part. Assume that $n+(q^k-1)$ can be represented as $\sum_{i\in I }{(q^i-1)}$. Then $n$ is also of this form. More precisely, we  show that  $k\in I$  and hence $n= \sum_{i\in I \setminus \{k\} }{(q^i-1)}$.We assume by contradiction that $I$ does not  contain $k$, then $\sum_{i\in I }{(q^i-1)} < q^k-1$. Indeed, $$ \sum_{i\in I }{(q^i-1)}\leq \sum_{i\leq k-1 }{(q^i-1)}= q\frac{q^{k-1}-1}{q-1}-(k-1) < q^k-1.$$Since $I$ does not contain $k$ than  $n+(q^k-1) < q^k-1$ which is not true. Consequently,  $I$ contains $k$  and then $n= \sum_{i\in I \setminus \{k\} }{(q^i-1)}$.
       \end{proof}
\bigskip

 If $W=a_1a_2\cdots a_l \in \{0,1,-1\}^l$ then  set $\widehat{W}:=(-a_1)(-a_2)\cdots(-a_l)$.

 \begin{lemma}\label{concate} For every  $n\geq 1$ we have the following:
 % Avec ces notations, on peut remarquer que (et d�montrer par r�currence par exemple):
 $$W_n=(-1)\widehat{W_0}\widehat{W_1}\cdots \widehat{W_{n-1}}0^{\alpha_n}$$
 with  $\alpha_n=(q^{n+1}-1)-2(q^n-1)$.
\end{lemma}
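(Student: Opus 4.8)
The plan is to read off the three natural pieces of $W_n$ directly from the definition of $\p$ and match them against the right-hand side of the claimed identity, using Lemmas \ref{l2} and \ref{l3} for the middle and final segments. First I would record the two bookkeeping facts that drive everything: by definition $W_n = p_{q^n-1}p_{q^n}\cdots p_{q^{n+1}-2}$, and, since the initial letter $p_0=1$ is split off in the factorization $\p = 1\,W_0W_1\cdots$, the contiguous blocks satisfy $W_0W_1\cdots W_{n-1} = p_1 p_2\cdots p_{q^n-2}$. As a preliminary consistency check, the length of the right-hand side is $1 + (q-2) + \sum_{i=1}^{n-1}q^i(q-1) + \alpha_n = (q^n-1)+\alpha_n$, and substituting $\alpha_n = (q^{n+1}-1)-2(q^n-1)$ gives exactly $q^n(q-1) = |W_n|$.

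Next I would identify the three segments in turn. The leading letter is $p_{q^n-1}$; since $q^n-1$ is realized by $J=\{n\}$, we get $p_{q^n-1}=(-1)^{1}=-1$, which is the leading $(-1)$. For the middle segment I would invoke the Remark following Lemma \ref{l3}, namely $p_{(q^n-1)+k} = -p_k$ for $1\leq k< q^n-1$. Applied over the full range $1\leq k\leq q^n-2$, this shows that the block $p_{q^n}\cdots p_{2q^n-3}$ is the letterwise negation of $p_1\cdots p_{q^n-2}=W_0W_1\cdots W_{n-1}$; since negation distributes over concatenation, $\widehat{UV}=\widehat U\widehat V$, this block is precisely $\widehat{W_0}\widehat{W_1}\cdots\widehat{W_{n-1}}$.

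Finally, the leading letter together with this middle block occupy positions $q^n-1$ through $2q^n-3$, so the remaining segment is $p_{2q^n-2}\cdots p_{q^{n+1}-2}$, i.e. exactly the indices $k$ with $2(q^n-1)\leq k\leq q^{n+1}-2$. By Lemma \ref{l2} every such $p_k$ vanishes, so the tail is $0^{\alpha_n}$ with $\alpha_n = (q^{n+1}-2)-(2q^n-2)+1 = (q^{n+1}-1)-2(q^n-1)$, as asserted; concatenating the three pieces gives the identity.

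The only real obstacle is the bookkeeping of index ranges, and I would be careful about two boundary matches in particular: that $W_0W_1\cdots W_{n-1}$ begins at position $1$ (because the initial $1$ is separated out), so the range $1\leq k\leq q^n-2$ falls strictly inside the hypothesis $k<q^n-1$ required by Lemma \ref{l3}; and that the cutoff $2(q^n-1)$ is precisely where Lemma \ref{l2} takes over. No induction is needed: the statement follows by splitting $W_n$ at position $2q^n-2$ and applying the two preceding lemmas to the two halves.
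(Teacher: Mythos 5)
Your proof is correct and takes essentially the same approach as the paper: both split $W_n$ at the same indices, read off the leading letter $p_{q^n-1}=-1$, apply Lemma \ref{l3} (via its Remark) to identify the shifted middle segment with $\widehat{W_0}\widehat{W_1}\cdots\widehat{W_{n-1}}$, and apply Lemma \ref{l2} to get the zero tail of length $\alpha_n$. The only difference is presentational: the paper negates the blocks $W'_i$ one at a time, whereas you treat the middle segment as a single chunk using $\widehat{UV}=\widehat{U}\widehat{V}$, and you add a length check that the paper leaves implicit.
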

\begin{proof}

Obviously, the word $W_n$ begins with $-1$ since $p_{q^n-1}=-1$. In order to prove the relation above it suffices to split 
 $W_n$ into  subwords as follows:

$$W_n=\underbrace{p_{q^n-1}}_{-1}\underbrace{0\cdots 0}_{W'_0} \underbrace{p_{(q^n-1)+(q-1)} p_{(q^n-1)+(q^2-2)}}_{W'_1}\underbrace{p_{(q^n-1)+(q^2-1)}\cdots p_{(q^n-1)+(q^3-2)}}_{W'_2}$$
      
      $$\cdots \underbrace{p_{(q^n-1)+(q^{n-1}-1)}\cdots p_{(q^n-1)+(q^n-2)}}_{W'_{n-1}}\underbrace{p_{2(q^n-1)}\cdots p_{q^{n+1}-2}}_{0^{\alpha_n}}.$$
    
   Since $p_{(q^n-1)+k}=-p_k$, for every $k< q^n-1$ (by Lemma \ref{l3}), we obtain that $W_i'=\widehat{W_i}$, for $0\leq i \leq n-1$. Lemma \ref{l2} ends the proof.
    %Splitting as above  and using the two previous lemmas, we see the occurrence of the word $\widehat{W_i}$, for $i <n$ and next, a  block of zeros because, if the index $k \in \left[2(q^n-1), q^{n+1}-2\sight]$ then $k$ cannot be written in the form $\sum_{j\in J }{(q^j-1)}$. Hence  $a_k=0$.
          \end{proof}

\bigskip

Since the subword $W_n$ ends with $0^{\alpha_n}$, we can define $U_n$ as prefix of $W_n$ such that $W_n:=U_n0^{\alpha_n}$, for every $n\geq 1$. Notice that $\left|U_n\right|=q^n-1$.

\begin{lemma}\label{l2} For every $n\geq 1$, we have  $U_{n+1}=U_{n}\widehat{U_{n}}0^{\alpha_{n}}.$
\end{lemma}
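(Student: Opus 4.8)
The plan is to prove the recurrence $U_{n+1}=U_n\widehat{U_n}0^{\alpha_n}$ by an argument entirely parallel to the one used for Lemma~\ref{U_n} in the case $q=2$, but keeping track of the sign-flip encoded by the hat operation. First I would recall the two structural facts already established: by Lemma~\ref{concate} we have $W_{n+1}=(-1)\widehat{W_0}\widehat{W_1}\cdots\widehat{W_n}0^{\alpha_{n+1}}$, and by definition $U_{n+1}$ is the prefix of $W_{n+1}$ obtained by deleting the trailing block $0^{\alpha_{n+1}}$, so that $U_{n+1}=(-1)\widehat{W_0}\widehat{W_1}\cdots\widehat{W_n}$.

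The key step is to recognize the prefix $(-1)\widehat{W_0}\widehat{W_1}\cdots\widehat{W_{n-1}}$ inside this expression. Applying the hat operation to the identity of Lemma~\ref{concate} at level $n$, namely $W_n=(-1)\widehat{W_0}\cdots\widehat{W_{n-1}}0^{\alpha_n}$, and using that the hat operation is an involution that distributes over concatenation (since $\widehat{AB}=\widehat{A}\,\widehat{B}$ and $\widehat{0^{\alpha_n}}=0^{\alpha_n}$), I obtain $\widehat{W_n}=1\,W_0\,W_1\cdots W_{n-1}\,0^{\alpha_n}$. More useful is the reverse reading: since $U_n$ is defined by $W_n=U_n0^{\alpha_n}$, we have $U_n=(-1)\widehat{W_0}\cdots\widehat{W_{n-1}}$, hence $\widehat{U_n}=1\,W_0\cdots W_{n-1}$ and $\widehat{W_n}=\widehat{U_n}\,0^{\alpha_n}$. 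I would then split the prefix of $W_{n+1}$ as
\begin{equation}\nonumber
U_{n+1}=(-1)\widehat{W_0}\cdots\widehat{W_{n-1}}\,\widehat{W_n}=U_n\,\widehat{W_n}=U_n\,\widehat{U_n}\,0^{\alpha_n},
\end{equation}
where the middle equality is exactly the identification $U_n=(-1)\widehat{W_0}\cdots\widehat{W_{n-1}}$ and the last equality uses $\widehat{W_n}=\widehat{U_n}0^{\alpha_n}$. This yields the claimed formula.

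The only genuine obstacle, and the point I would check most carefully, is the bookkeeping of the signs and of the exponents $\alpha_n$: one must verify that the hat operation commutes with the decomposition coming from Lemma~\ref{concate} and that deleting $0^{\alpha_{n+1}}$ from $W_{n+1}$ leaves precisely $U_n\widehat{U_n}0^{\alpha_n}$ rather than a block with the wrong number of trailing zeros. Concretely this amounts to confirming the arithmetic identity relating $\alpha_{n+1}$, $\alpha_n$ and $|U_n|=q^n-1$, which follows from $\alpha_n=(q^{n+1}-1)-2(q^n-1)$; the length check $|U_{n+1}|=q^{n+1}-1=2(q^n-1)+\alpha_n=|U_n|+|\widehat{U_n}|+\alpha_n$ serves as a reassuring consistency test. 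Everything else is a direct transcription of the $q=2$ proof of Lemma~\ref{U_n}, with $U_nU_n0$ replaced by $U_n\widehat{U_n}0^{\alpha_n}$ to account for the alternating signs $p_k=-p_{k+(q^n-1)}$ recorded in Lemma~\ref{l3}.
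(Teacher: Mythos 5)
Your proof is correct and is essentially identical to the paper's: both extract $U_{n+1}=(-1)\widehat{W_0}\cdots\widehat{W_n}$ from Lemma~\ref{concate}, identify the prefix $(-1)\widehat{W_0}\cdots\widehat{W_{n-1}}$ as $U_n$, and conclude via $U_{n+1}=U_n\widehat{W_n}=U_n\widehat{U_n0^{\alpha_n}}=U_n\widehat{U_n}0^{\alpha_n}$. The extra details you supply (the involution/concatenation properties of the hat operation and the length check $|U_{n+1}|=2(q^n-1)+\alpha_n$) are sound refinements of the same argument.
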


\begin{proof} 
By Lemma \ref{concate},  $U_n=(-1)\widehat{W_0}\widehat{W_1}\cdots \widehat{W_{n-1}}$. Consequently: 
$$U_{n+1}=\underbrace{(-1)\widehat{W_0}\widehat{W_1}\cdots \widehat{W_{n-1}}}_{U_n}\widehat{W_n}=U_n\widehat{U_n 0^{\alpha_n}}=U_n\widehat{U_n}0^{\alpha_n}.$$
\end{proof}
\medskip

\begin{rem}\label{rema} Since $q \geq 3$ we have $ \alpha_n\geq |U_n|$  for every $n \geq 1$. Moreover $(\alpha_n)_{n\geq1}$ is a positive and increasing sequence.\end{rem}

%.\\  The length of  $U_n$ is  $$|U_n|=q^n-1$$ and since
%$$\alpha_n=(q^{n+1}-1)-2(q^n-1)$$ we get that $$ \alpha_n\geq |U_n|$$ when $q\geq 3$ for every $n \geq 1$.\\
%2. Moreover, $(\alpha_n)_{n\geq1}$ is a positive and increasing sequence.

\begin{lemma}\label{alphab} For every $n\geq 1$, let $A_n:=\{U_n, \widehat{U_n}, 0^{\alpha_i}, \, i \geq n \}$. Then $\p \in A_n^{\mathbb N}$.
\end{lemma}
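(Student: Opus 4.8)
The plan is to prove Lemma \ref{alphab} by induction on $n$, mirroring the structure used for the $q=2$ case in Lemma \ref{alphabet}. The statement claims that $\p$ can be entirely factorized using only the blocks $U_n$, $\widehat{U_n}$, and the pure-zero words $0^{\alpha_i}$ for $i \geq n$. First I would record the base factorization coming from Lemma \ref{concate} and Lemma \ref{l2}: by definition of $W_n$ and $U_n$, and since $\p = W_0 W_1 W_2 \cdots$, I can write
\[
\p = U_n \, \widehat{U_n} \, 0^{\alpha_n} \, W_{n+1} W_{n+2} \cdots,
\]
using that $W_n = U_n 0^{\alpha_n}$ and, for the first occurrence, tracking the prefix $W_0 \cdots W_{n-1}$ which by Lemma \ref{concate} assembles precisely into $U_n$ (this is the content analogous to equation (\ref{conc})). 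Thus it suffices to show each $W_{n+k}$ decomposes into words drawn from $A_n$.

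The core of the argument is an auxiliary induction on $k$ establishing that, for every $k \geq 1$, the block $U_{n+k}$ (and hence $W_{n+k} = U_{n+k} 0^{\alpha_{n+k}}$) is a concatenation of copies of $U_n$, $\widehat{U_n}$, and zero-blocks $0^{\alpha_i}$ with $i \geq n$. For the base case $k=1$, Lemma \ref{l2} gives $U_{n+1} = U_n \widehat{U_n} 0^{\alpha_n}$, which already lies in $A_n^*$. For the inductive step I would apply Lemma \ref{l2} again in the form $U_{n+k+1} = U_{n+k}\widehat{U_{n+k}}0^{\alpha_{n+k}}$; here the subtlety is that I must also control $\widehat{U_{n+k}}$. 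Since the hat operation is an involution that negates letters and maps $0 \mapsto 0$, applying it to a decomposition $U_{n+k} = X_1 X_2 \cdots X_r$ with each $X_j \in \{U_n, \widehat{U_n}, 0^{\alpha_i}\}$ yields $\widehat{U_{n+k}} = \widehat{X_1}\cdots\widehat{X_r}$, and each $\widehat{X_j}$ is again in $A_n$ because $\widehat{U_n}$ and $U_n$ are swapped while the zero-blocks are fixed. Combining the decomposition for $U_{n+k}$, its hatted counterpart, and the trailing $0^{\alpha_{n+k}}$ (with $n+k \geq n$) closes the induction.

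Finally I would assemble these pieces: substituting the $A_n^*$-decompositions of each $W_{n+k}$ into the factorization of $\p$ displayed above shows every tail of $\p$ past the initial $U_n$ lies in $A_n^*$, and the initial $U_n$ itself is a single letter of $A_n$; hence $\p \in A_n^{\mathbb N}$. The step I expect to be the main obstacle is the bookkeeping around the hat operation in the inductive step, namely verifying cleanly that $A_n$ is stable under $W \mapsto \widehat{W}$ at the level of the decomposition, so that hatting a valid $A_n$-factorization produces another valid one; once that stability is isolated as the key observation, the rest is routine concatenation. I would state this stability explicitly (the involution $\,\widehat{\cdot}\,$ permutes $A_n$ by exchanging $U_n \leftrightarrow \widehat{U_n}$ and fixing each $0^{\alpha_i}$) before running the induction, since it is used at every step.
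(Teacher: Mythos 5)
Your approach is in substance the same as the paper's: factor $\p$ into a prefix followed by $W_n W_{n+1}\cdots$, then decompose each $W_{n+k}$ into elements of $A_n$ by iterating the recursion $U_{j+1}=U_j\widehat{U_j}0^{\alpha_j}$ of Lemma \ref{l2}, using the fact that the hat operation permutes $A_n$ (swapping $U_n \leftrightarrow \widehat{U_n}$ and fixing the zero blocks). In fact, your explicit induction on $k$, with the hat-stability of $A_n$ isolated as the key observation, is a rigorous rendering of what the paper leaves at the level of ``keeping on this procedure'', so on that point your write-up is tighter than the original.

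One bookkeeping slip should be corrected, though it does not damage the argument. For $q \geq 3$ the word $\p$ is \emph{not} $W_0W_1W_2\cdots$: since $W_0=0^{q-2}$ starts at position $1$, there is a leading letter $p_0=1$, so $\p = 1\,W_0W_1W_2\cdots$ (this differs from the case $q=2$, where $W_0=1$ and $\s = W_0W_1W_2\cdots$). Consequently, by Lemma \ref{concate} the prefix of $\p$ of length $q^n-1$ is $1\,W_0\cdots W_{n-1} = \widehat{U_n}$, \emph{not} $U_n$ (indeed $U_n$ begins with $-1 \neq 1$ in odd characteristic), and the correct base factorization is $\p = \widehat{U_n}\, U_n 0^{\alpha_n}\, W_{n+1}W_{n+2}\cdots$, as in the paper, rather than $U_n\widehat{U_n}0^{\alpha_n}W_{n+1}\cdots$ as you wrote. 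Since $A_n$ contains both $U_n$ and $\widehat{U_n}$ --- precisely the stability you yourself point out --- this swap is harmless and the rest of your induction goes through verbatim.
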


\begin{proof} Let $n\geq 1$. By definition of $W_n$ and $U_n$, the infinite word $\p$ can be factorized as:
$$\p=\underbrace{1 W_0 W_1\cdots W_{n-1}}_{V_n}W_n W_{n+1}\cdots.$$

By Lemma \ref{concate},  since $U_n=(-1)\widehat{W_0}\widehat{W_1}\cdots \widehat{W_{n-1}}$ 
then the prefix $V_n=\widehat{U_n}$.

Also, $W_{n+1}=U_n\widehat{U_n}0^{\alpha_n}0^{\alpha_{n+1}}$, $W_{n+2}=U_n\widehat{U_n}0^{\alpha_n}\widehat{U_n}U_n 0^{\alpha_n+\alpha_{n+1}+\alpha_{n+2}}$. Keeping on this procedure, $W_n$ can be written as a concatenation of $U_n$, $\widehat{U_n}$ and $0^{\alpha_i}$, $i\geq n$. More precisely,  $\p$ can be written  in the following manner:
$$\p= \widehat{U_n} \underbrace{U_n0^{\alpha_n}}_{W_n}\underbrace{U_n\widehat{U_n}0^{\alpha_n+\alpha_{n+1}}}_{W_{n+1}}\underbrace{U_n\widehat{U_n}0^{\alpha_n}\widehat{U_n}U_n 0^{\alpha_n+\alpha_{n+1}+\alpha_{n+2}} }_{W_{n+2}}              \cdots.$$\\
\end{proof}

\medskip
\begin{proof}[Proof of Proposition \ref{upper2}]

Let $m\in \N$.  Then there exists a unique positive integer $n$, such that:  $$q^{n-1}-1\leq m < q^n-1.$$

By Lemma \ref{alphab} and  the Remark \ref{rema},  between the words $U_n$ and $\widehat{U_n}$ (when they do not  occur consecutively), there are only blocks of zeros of length greater than $\alpha_n\geq |U_n|=q^n-1$ and thus greater than  $m$.
Hence, all distinct factors of length $m$ appear in the following words:
$U_n\widehat{U_n}$, $\widehat{U_n}U_n$,
$0^{\alpha_n}U_n$, $0^{\alpha_n}\widehat{U_n}$, $U_n0^{\alpha_n}$ and $\widehat{U_n}0^{\alpha_n}$. \\

%$$\widehat{U_n} U_n0^{\alpha_n}U_n\widehat{U_n}0^{\alpha_n}.$$

In $U_n\widehat{U_n}$   we may find at most $|U_n\widehat{U_n}|-m+1=2|U_n|-m+1$  factors at length $m$.  
In $\widehat{U_n}U_n$ we may find at most $m-1$ new different factors of length $m$.  More precisely, they form the set $i(\widehat{U_n},\varepsilon, U_n)^+$.

In  $0^{\alpha_n}U_n$ (respectively $0^{\alpha_n}\widehat{U_n}$, $U_n0^{\alpha_n}$, $\widehat{U_n}0^{\alpha_n}$)
 we may find at most $m$ (respectively $m-1$) new different factors (they belong to $i(0^{\alpha_n}, \varepsilon, U_n)^+ \cup \{0^m\}$, respectively $i(0^{\alpha_n},\varepsilon, \widehat{U_n})^+$, $i(U_n,\varepsilon, 0^{\alpha_n})^+$ and $i(\widehat{U_n}, \varepsilon, 0^{\alpha_n})^+)$.\\

Consequently, the number of such subwords is at most $2|U_n|+4m-3$. Since $U_n =q^n-1 = q(q^{n-1}-1)+q-1 \leq qm+q-1$ we obtain that:
$$p(\p,m) \leq 2(qm+q-1)+4m-3 \leq (2q+4)m+ 2q-3.$$
\end{proof}

\begin{rem}\label{notperiodiq} It is not difficult to prove that $\p$  is not eventually periodic.  Indeed, recall that $\p=W_0W_1W_2\cdots$.  Using  Theorem \ref{concate} and the Remark \ref{rema}, 
\begin{equation}\label{notperiodic}\p=A_10^{l_1}A_20^{l_2}\cdots A_i 0^{l_i}\cdots,\end{equation}
 where $A_i$, $i\geq 1$, are finite words  such that $A_i\neq 0^{|A_i|}$ and $(l_i)_{i\geq 1}$ is a strictly increasing sequence.

\end{rem}

\begin{rem} This part concerns the case where  $q\geq 3$.  If the characteristic of the field is $2$, that is, if $q=2^n$, where $n\geq 2$, then, in the proof we have that $-1=1$, but the structure of $\p$ remain the same. We will  have certainly a lower complexity, but $\p$ is still on the form (\ref{notperiodic}),   and thus  $p(\p,m)\leq (2q+4)m+ 2q-3$.
\end{rem}

\bigskip

\section {Closure properties of two classes of Laurent series}\label{closure}
It is natural to classify  Laurent series in function of their complexity. 
In this section we  study some closure  properties  for   the following classes:

$$\mathcal{P}=   \{f \in \F_q((T^{-1})),  \text{there exists } K \text{ such that  }  p(f,m)= O(m^K)\}$$\\
and, more generally,
$$\mathcal{Z}  =  \{f \in \F_q((T^{-1})),  \text{ such that } \;h(f)=0\}.$$

Clearly, $\mathcal{P} \subset \mathcal{Z} $. We recall that $h$ is the topological entropy defined in Section 2.

We have already seen, in Theorem \ref{algebraic}, that the algebraic Laurent series belong to $\mathcal P$ and $\mathcal Z$. Also, by Theorem \ref{case2} and \ref{caseq}, $\frac{1}{\Pi_q}$ belongs to $\mathcal P$. Hence, $\mathcal P$, and more generally $\mathcal Z$, seem to be two important objects of interest for this classification.

The main result we will prove in this section is Theorem \ref{vector space}.

%\begin{theoreme}\label{vector space} $\mathcal{P}$  and  $\mathcal{Z}$ are  vector spaces over $\F_q(T)$. \end{theoreme}

In the second part, we will prove the stability of $\mathcal P$ and $\mathcal Z$ under Hadamard product, formal derivative and Cartier operator.

\subsection{Proof of  Theorem \ref{vector space}}

The proof of Theorem  \ref{vector space} is a straightforward consequence of  Propositions \ref{addition} and \ref{ration} below.

\begin{prop}\label{addition} Let $f$ and $g$ be two Laurent  series belonging to  $\F_q((T^{-1}))$. Then, for every integer $m \geq 1$, we have:
$$ \frac{p(f,m)}{p(g,m)}\leq p(f+g, m) \leq p(f,m) p(g,m).$$
\end{prop}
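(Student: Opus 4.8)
The statement has two inequalities, and I would prove them separately. The upper bound $p(f+g,m)\le p(f,m)\,p(g,m)$ is the easy direction and rests on a simple injectivity observation; the lower bound $p(f,m)/p(g,m)\le p(f+g,m)$ is really the same inequality applied to a different pair of series, exploiting the fact that addition of Laurent series with coefficients in $\F_q$ is done coefficientwise (no carries, as the introduction emphasizes).

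\emph{Upper bound.} Let $\a=(a_n)_{n\ge 0}$ and $\b=(b_n)_{n\ge 0}$ be the coefficient sequences of $f$ and $g$, and let $\mathbf{s}=(a_n+b_n)_{n\ge 0}$ be the coefficient sequence of $f+g$ (here I use that in characteristic $p$ the $n$-th coefficient of $f+g$ is exactly $a_n+b_n$, so no positional interference occurs). A length-$m$ factor of $\mathbf{s}$ starting at position $j$ is the coordinatewise sum of the length-$m$ factor of $\a$ at $j$ and the length-$m$ factor of $\b$ at $j$. Thus the map sending an occurrence of a factor of $\mathbf{s}$ to the pair (factor of $\a$, factor of $\b$) at the same position is well defined on factors, and two distinct factors of $\mathbf{s}$ must come from distinct pairs. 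Hence the number of length-$m$ factors of $\mathbf{s}$ is at most the number of such pairs, which is at most $p(f,m)\,p(g,m)$. I would phrase this as an injection from the set of length-$m$ factors of $f+g$ into the product of the two factor sets.

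\emph{Lower bound.} The key trick is to write $f=(f+g)+(-g)$ and apply the upper bound to the pair $f+g$ and $-g$. This gives
\[
p(f,m)\le p(f+g,m)\,p(-g,m).
\]
Since negation acts letterwise on the coefficient sequence ($a_n\mapsto -a_n$ is a bijection of $\F_q$ applied symbol by symbol), the factor sets of $g$ and $-g$ are in bijection, so $p(-g,m)=p(g,m)$. Rearranging yields $p(f,m)/p(g,m)\le p(f+g,m)$, as desired.

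\textbf{Main obstacle.} There is no serious analytic difficulty here; the whole content is the carry-free nature of coefficientwise addition in $\F_q((T^{-1}))$, which guarantees that factors of $f+g$ are literally the symbol-wise sums of aligned factors of $f$ and $g$. The only points requiring care are bookkeeping ones: aligning factors by their starting position so that the injection in the upper bound is genuinely well defined, and justifying $p(-g,m)=p(g,m)$ via the letterwise bijection $x\mapsto -x$ on $\F_q$. Once these are in place, the lower bound is immediate from the substitution $f=(f+g)+(-g)$, and the proposition follows.
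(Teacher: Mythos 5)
Your proof is correct, and it splits from the paper's in an interesting way. The upper bound is exactly the paper's argument: the coefficient sequence of $f+g$ is the termwise sum of those of $f$ and $g$, so every length-$m$ factor of $f+g$ is a symbolwise sum of aligned factors of $f$ and $g$, whence $p(f+g,m)\le p(f,m)\,p(g,m)$. For the lower bound, however, the paper's proof proper uses Dirichlet's pigeonhole principle: look at the positions where the $p(f,m)$ distinct factors $U_1,\dots,U_{p(f,m)}$ of $f$ occur; among the factors of $g$ read at those same positions, some single word $W$ appears at least $\lceil p(f,m)/p(g,m)\rceil$ times, and since $U_i+W\neq U_j+W$ whenever $U_i\neq U_j$, these occurrences produce at least $p(f,m)/p(g,m)$ distinct factors of $f+g$. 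Your route --- writing $f=(f+g)+(-g)$, applying the upper bound to this pair, and using $p(-g,m)=p(g,m)$ via the letterwise bijection $x\mapsto -x$ --- is precisely the alternative the author records in Remark \ref{autre} immediately after the proof, noting that the first inequality ``may also be easily obtained from the second one'' but that the pigeonhole argument was kept as more intuitive. What each buys: your reduction is shorter and purely formal, needing nothing beyond the upper bound and invariance of complexity under a letterwise bijection of the alphabet; the paper's argument is constructive, actually exhibiting the distinct factors of $f+g$ that witness the bound. One small point of care in your upper bound: the map from a factor of $f+g$ to a pair of factors of $f$ and $g$ is well defined only after choosing one occurrence per factor (the same factor may arise from several pairs), but since the pair at any chosen occurrence determines the factor by summation, distinct factors still map to distinct pairs, so your injection and the resulting count are sound.
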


\begin{proof}

Let $f(T):=\sum_{i\geq -i_1}{a_iT^{-i}} \text{ and } g(T):=\sum_{i\geq -i_2}{b_iT^{-i}} \,,\; i_1,i_2 \in \N$.  \\

 By definition   of the complexity of  Laurent series (see Section (\ref{defini})), for every $m\in \N$: $$ p(f(T)+g(T),m)=p(\sum_{i\geq 0}{c_iT^{-i}},m),$$
 where $c_i:=(a_i+b_i) \in \F_q$.
Thus we may suppose that $$f(T):=\sum_{i\geq 0}{a_iT^{-i}}\text{ and } g(T):=\sum_{i\geq 0}{b_iT^{-i}}.$$
We denote by $\a:=(a_i)_{i\geq 0}$, $\textbf {b}:=(b_i)_{i\geq 0}$ and $\textbf {c}:=(c_i)_{i\geq 0}$.

For the sake of simplicity, throughout this part, we  set  $x(m):=p(f,m)$ and  $y(m):=p(g,m)$.
Let $\mathcal{L}_{f,m}:=\{U_1,U_2,\ldots, U_{x(m)}\}$ (resp. $\mathcal{L}_{g,m}:=\{V_1,V_2,\ldots, V_{y(m)}\}$) be the set of different factors of length $m$  of the sequence of coefficients of $f$ (respectively of $g$). As the sequence of coefficients  of the Laurent series $f+g$ is obtained by the termwise addition of the  sequence of coefficients of  $f$ and the sequence of coefficients of $g$, we deduce that:
$$\mathcal{L}_{f+g,m} \subseteq  \{U_i+V_j, \; 1\leq i \leq x(m), \; 1\leq j\leq y(m)\}$$
where  $\mathcal{L}_{f+g,m}$ is the set of all distinct factors of length $m$ occurring in $\bf c$,  and  where the sum of two words with the same length $A=a_1\cdots a_m$ and $B=b_1\cdots b_m$ is defined as $$A+B= (a_1+b_1)  \cdots (a_m+b_m) $$ (each sum being considered over $\F_q $). 
Consequently,  $p(f+g,m)\leq p(f,m)p(g,m)$.

\bigskip

We shall prove now the first  inequality using   Dirichlet's principle. 

Notice that if $x(m)< y(m)$ the  inequality is obvious. \\

Assume now that   $x(m) \geq y(m)$. Remark that if we extract $x(m)$ subwords  of length $m$ from  $\b$,   there is at least one word  which appears at least $\left\lceil \frac{x(m)}{y(m)}\right\rceil$ times. \\

For every fixed $m$, there exist exactly $x(m)$ different factors of  $\a$.  The subwords of $\bf c$ will be obtained adding factors of length $m$ of  $\a$ with factors of length $m$ of $\b$. 
 
Consider all distinct factors of length $m$ of $\a$:  $U_1, U_2,\ldots, U_{x(m)}$, that occur in positions $i_1,i_2,\ldots, i_{x_m}$. Looking in the same positions in $\bf b$, we have $x(m)$ factors of length $m$ belonging to $ \mathcal{L}_{g,m}$. Since $x(m) \geq y(m)$, by the previous remark, there is one word $W$ which occur at least  $\left\lceil \frac{x(m)}{y(m)}\right\rceil$ times in $\b$.

Since we have $U_i+W \neq U_j+W$ if $U_i\neq U_j$,  the conclusion follows immediately.
\end{proof}

\begin{rem}\label{autre} In fact, the first inequality may also be easily obtained from the second one, but we chose here to give  a more intuitive proof. Indeed, if we denote $f:=h_1+h_2$, $g:=-h_2$, where $h_1,h_2 \in \F_q((T^{-1}))$, the first relation follows immediately, since $p(h_2,m)=p(-h_2,m)$, for any $m\in \N$.
\end{rem}

%For every $m$ fixed, there exist $x(m)$ different factors of  $\a=a_0a_1a_2\cdots$.  The subwords of $\bf c$ will be obtained adding factors of length $m$ of  $\a$ with factors of length $m$ of $\b$. 

  %We have exactly $x(m)$ distinct words of $\a$ and, by the previous remark, there are  at least   $\left\lceil \frac{x(m)}{y(m)}\right\rceil$ distinct words that will be added with the same word. Hence, we obtain at least $\left\lceil \frac{x(m)}{y(m)}\right\rceil$  distinct factors of length $m$ of $\bf c$. This ends the proof. 

\begin{rem}\label{addition_poly} If $f\in \F_q((T^{-1}))$ and $a \in \F_q[T]$ then, obviously,  there exists a constant $C$ (depending on the degree of the polynomial $a$) such that, for any $m\in \N$, $$p(f+a, m) \leq p(f,m)+C.$$
\end{rem}
\medskip

\begin{rem}\label{saturate_eq}
Related to Proposition \ref{addition}, one can naturally  ask if it is  possible to saturate the inequalities  in Proposition \ref{addition}. By Remark \ref{autre}, it suffices to show that this is possible  for one inequality. In the sequel, we construct two explicit examples of Laurent series of linear complexity such that their sum has  quadratic complexity. 

\medskip

 Let $\alpha$ and $\beta$ be two irrational numbers such that $1$, $\alpha$ and $\beta$ are  linearly independent over $\mathbb Q$.
For any  $i\in \{\alpha, \beta\}$ we consider the following rotations:
 $$ R_{i}: \mathbb T^1 \rightarrow \mathbb T^1 \hspace{1cm} x\rightarrow \{x+i \},$$
 %$$ R_{\alpha}: \mathbb T^1 \rightarrow \mathbb T^1 \hspace{1cm} x\rightarrow \{x+\alpha \},$$
 %$$ R_{\beta}: \mathbb T^1 \rightarrow \mathbb T^1 \hspace{1cm} x\rightarrow \{x+\beta \},$$
 where $\mathbb T^1$ is the circle ${\mathbb R}/{\mathbb Z}$, identified to the interval $[0,1 )$.
 
 We may partition $\mathbb T^1$ in two intervals $I_{i}^0$  and $I_{i}^1$, delimited by $0$ and $1-i$. We denote by $\nu_{i}$ the coding function:
 
 \begin{displaymath}
\nu_{i}(x) = \left\{ \begin{array}{ll}
0 & \text{ if } x \in I_{i}^0;\\       
1 & \text{ if  } x \in I_{i}^1.
\end{array} \right.
\end{displaymath}

 %$$\begin{array}{lcc} \nu_{i}(x)&=&0 \text{ if } x \in I_0;\\\nu_{i}(x)&=&1 \text{ if  } x \in I_1.\end{array}$$
    
      We define $\a:=(a_n)_{n\geq 0}$ such that, for any ${n\geq 0}$, $$a_n=\nu_{\alpha}(R_{\alpha}^n(0))=\nu_{\alpha}(\{n\alpha \})$$  and $\b:=(a_n)_{n\geq 0}$ such that, for any ${n\geq 0}$, $$b_n=\nu_{\beta}(R_{\beta}^n(0))=\nu_{\beta}(\{n \beta \}).$$
      
 \medskip     
  Let us consider $f(T)=\sum_{n\geq 0}{a_nT^{-n}}$  and $g(T)=\sum_{n\geq 0}{b_nT^{-n}}$ be two elements of $\F_3((T^{-1}))$.    We will prove that, for any $m\in \N$, we have:
 \begin{equation}\label{saturate}p(f+g,m)=p(f,m)p(g,m).\end{equation}
 We thus provide  an example of two infinite words whose sum has a maximal complexity, in view of Proposition \ref{addition}.
\medskip

 A sequence of form $(\nu(R_{\alpha}^n(x)))_{n\geq 0}$ is a particular case of rotation sequences. It is not difficult to see that the complexity of the sequence $\a$  satisfies $p(\a, m)=m+1$ for any $m\in \N$ and hence $\a$ is Sturmian. For a complete proof, the reader may consult the monograph \cite{pytheas}, but also the original paper of Morse and Hedlund \cite{Morse_Hedlund}, where they  prove  that every Sturmian sequence is a rotation sequence.

\medskip

Let $m\in \N$. Let $$\mathcal L_{\a, m}:= \{U_1, U_2, \ldots, U_{m+1}\}$$ and respectively $$\mathcal L_{\b, m}:= \{V_1, V_2, \ldots, V_{m+1}\}$$ be the set of distinct factors of length $m$ that occur in $\a$, respectively in $\b$.

 In order to prove the relation (\ref{saturate}), we show  that \begin{equation}\label{cartesian}\mathcal L_{\a+\b, m} = \{ U_i+V_j, 1 \leq i, j \leq m+1\}.\end{equation}

Let $I:=[0,1)$. It is well-known (see for example Proposition 6.1.7 in \cite{pytheas}) that, using the definition of the sequence $\a$ (respectively of $\b$),  we can split $I$ in $m+1$ intervals of positive length $J_1,J_2,\ldots, J_{m+1}$ (respectively  $L_1,L_2,\ldots, L_{m+1}$)  corresponding to $U_1, U_2, \ldots, U_{m+1}$ (respectively $V_1, V_2,\ldots, V_{m+1}$) such that:
%\begin{equation}\nonumber J_k=\{\{n\alpha\} \text{ such that } U_k=a_n a_{n+1}\cdots a_{n+m-1}\} \end{equation}
%\begin{equation}\nonumber (\text{respectively } L_k=\{\{n\beta\} \text{ such that } V_k=b_n b_{n+1}\cdots b_{n+m-1}\}). \end{equation}
\begin{equation}\nonumber\{n\alpha\}\in J_k \text{ if and only if }  a_{n}a_{n+1}\cdots a_{n+m-1}=U_k \end{equation}
\begin{equation}\nonumber (\text{respectively }\{n\beta\}\in L_k \text{ if and only if }  b_{n}b_{n+1}\cdots b_{n+m-1}=V_k.)\end{equation}

In other words, $\{n\alpha\}\in J_k$ (resp. $\{n\beta\}\in L_k$) if and only if the factor $U_k$ (resp.$V_k$)  occurs in  $\a$ (resp. $\b$)   at the position $n$.
\medskip

Now we use the well-known Kronecker's theorem which asserts that the sequence of fractional parts $(\{n \alpha\},\{n \beta\})_{n\geq 0}$ is dense in the square $[0,1)^2$ since by assumption $1$, $\alpha$ and $\beta$ are linearly independent over $\mathbb Q$.

 In particular, this implies that, for any pair $(i,j) \in \{0,1,\ldots, m+1\}^2$, there exists a positive integer $n$ such that $(\{n\alpha\}, \{n \beta\} )  \in J_i \times L_k$. This is equivalently to say that, for any pair of factors $(U_i, V_j)\in \mathcal L_{\a, m} \times \mathcal L_{\b, m}$, there exists $n$ such that $U_i=a_{n}a_{n+1}\cdots a_{n+m-1}$ and $V_k=b_{n}b_{n+1}\cdots b_{n+m-1}$. This proves Equality (\ref{cartesian}) and more precisely, since we are in characteristic $3$, we have the following equality:
 $$\mathrm{Card}\, \mathcal L_{\a+\b, m}=\mathrm{Card}\, \mathcal L_{\a, m}  \cdot \mathrm{Card}\, \mathcal L_{\b, m} =(m+1)^2.$$
\end{rem}

\vspace{10mm}

We point out the following consequence of Proposition \ref{addition}. 
\begin{coro}\label{majoration2}
 Let $f_1, f_2,\ldots, f_l \in  \F_q((T^{-1}))$. Then  for every $m\in \N$ and for every integer $i\in [1;l]$ we have the following:
 $$\frac{p(f_i,m)}{\prod_{j\neq i, 1\leq j\leq l}{p(f_j,m)}}\leq p(f_1+f_2+\cdots+f_l,m) \leq \prod_{1\leq j \leq l} {p(f_j,m)}.$$
  \end{coro}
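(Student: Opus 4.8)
The plan is to prove Corollary \ref{majoration2} by induction on $l$, treating it as an iterated application of Proposition \ref{addition}. The base case $l=1$ is trivial, and the case $l=2$ is exactly Proposition \ref{addition} (with the convention that an empty product in the denominator equals $1$). For the inductive step, I would group the sum $f_1+f_2+\cdots+f_l$ as $(f_1+\cdots+f_{l-1})+f_l$, apply Proposition \ref{addition} to these two Laurent series, and then feed in the inductive hypothesis for the sum of $l-1$ terms.

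For the upper bound, applying the second inequality of Proposition \ref{addition} to $g:=f_1+\cdots+f_{l-1}$ and $f_l$ gives
$$p(f_1+\cdots+f_l,m)\le p(f_1+\cdots+f_{l-1},m)\,p(f_l,m).$$
By the inductive hypothesis the first factor is at most $\prod_{1\le j\le l-1}p(f_j,m)$, and multiplying by $p(f_l,m)$ yields the claimed bound $\prod_{1\le j\le l}p(f_j,m)$. This direction is routine.

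The lower bound requires slightly more care, since the asymmetry in the statement (a single index $i$ is singled out) must be matched. Fix $i$ and write the total sum as $f_i+\big(\sum_{j\neq i}f_j\big)$. Applying the first inequality of Proposition \ref{addition} with the roles $f:=f_i$ and $g:=\sum_{j\neq i}f_j$ gives
$$p(f_1+\cdots+f_l,m)\ge \frac{p(f_i,m)}{p\big(\sum_{j\neq i}f_j,m\big)}.$$
Now I would bound the denominator from above using the upper bound already established (applied to the $l-1$ series $\{f_j\}_{j\neq i}$), namely $p\big(\sum_{j\neq i}f_j,m\big)\le\prod_{j\neq i}p(f_j,m)$. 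Since enlarging the denominator only decreases the fraction, substituting this bound gives
$$p(f_1+\cdots+f_l,m)\ge \frac{p(f_i,m)}{\prod_{j\neq i,\,1\le j\le l}p(f_j,m)},$$
which is exactly the desired lower bound.

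The only mild subtlety — and the step I would watch most carefully — is the direction of the inequality when replacing the denominator: I must use the upper bound on $p\big(\sum_{j\neq i}f_j,m\big)$ (not the lower bound), because making the denominator larger makes the fraction smaller, preserving the $\ge$ direction. Everything else is a direct assembly of the two halves of Proposition \ref{addition}, so there is no genuine obstacle beyond keeping the index bookkeeping and the inequality orientations consistent.
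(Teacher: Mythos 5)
Your proof is correct and is essentially the paper's argument: the paper states Corollary \ref{majoration2} as a direct consequence of Proposition \ref{addition}, and the intended justification is exactly your iteration/induction, with the upper bound obtained by repeated application of the second inequality and the lower bound by applying the first inequality to $f_i$ and $g:=\sum_{j\neq i}f_j$ and then bounding the denominator by the already-established upper bound. Your attention to the direction of the inequality when enlarging the denominator is exactly the right point to check, and it is handled correctly.
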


Notice that these inequalities can be saturated, just generalizing the construction above (choose $l$ Sturmian sequences of irrational slopes $\alpha_1, \alpha_2, \ldots, \alpha_l$, such that  $1, \alpha_1, \alpha_2, \ldots, \alpha_l$ are linearly independent over $\mathbb Q$).

%\begin{rem} According to  lemma \ref{addition}, the addition with a rational series does not change the order of complexity.  \end{rem}

\vspace{15mm}

We shall prove next that the sets $\mathcal{P}$ and $\mathcal{Z}$ are closed under multiplication by rationals. Let us begin with a particular case, that is the multiplication by a polynomial.
\begin{prop}  \label{poly} Let $b(T)\in \F_q[T]$ and $f(T)\in \F_q((T^{-1}))$. Then there is a positive constant $M$ (depending only on $b(T)$), such that for all $m\in \N$:
 $$p(bf,m)\leq M \;p(f,m).$$
\end{prop}

\begin{proof}  Let $$b(T):=b_0T^r+b_1T^{r-1}+\cdots+b_r \in \F_q[T]$$ and $$f(T):=\sum_{i\geq -i_0}{a_iT^{-i}}\in \F_q((T^{-1})), \, i_0\in \N.$$ 

Then \begin{equation}\label{poly1}\begin{split} b(T)f(T)&=b(T) \left(\sum _{i= -i_0}^{-1}{a_iT^{-i}}+\sum_{i\geq 0}{a_iT^{-i}}\right)\\ & =b(T)\left(\sum _{i= -i_0}^{-1}{a_iT^{-i}}\right)+b(T)\left (\sum_{i\geq 0}{a_iT^{-i}}\right). \end{split}\end{equation}

Now, the product
 \begin{equation}\label{poly2} \begin{split}b(T)(\sum_{i\geq 0}{a_iT^{-i}})&=T^r (b_0+b_1T^{-1}+b_2 T^{-2}+\cdots+b_rT^{-r})( \sum_{i\geq 0}{a_iT^{-i}})\\&:=T^r(\sum_{j\geq 0}{c_jT^{-j}})\end{split}\end{equation}
 
where the sequence   $\textbf{c}:=(c_j)_{j\geq 0}$ is defined as follows:
\[c_j=
\begin{cases} b_0a_j+b_1a_{j-1}+\cdots+b_ja_0  \text { if }  j<r \\
 b_0a_j+b_1a_{j-1}+\cdots+b_ra_{j-r}  \text{ if } j\geq r. \end{cases}\]

%First of all, notice that the multiplication of $f(T)$ by a monomial $s(T)=T^{s_0}$, does not change the order of magnitude of the complexity  of $f$. More precisely, $p(s(T)f(T),m)=p(f(T),m)+ S$, where $S$ is a parameter that depends only on $s_0$. 

According to definition of complexity (see Section \ref{defini}) and to relations (\ref{poly1}) and (\ref{poly2}), for every $m\in \N$, we have $$p\left(b(t)f(T),m\right)=p\left(b(T)(\sum_{i\geq 0}{a_iT^{-i}}),m\right)=p\left((\sum_{j\geq r}{c_jT^{-j}}),m\right).$$

Our aim is to count the number of words of form $c_jc_{j+1}\cdots c_{j+m-1}$, when $j\geq r$.
By definition of  $\textbf{c}$, we notice that for  $j\geq r$ these words depend only on  $a_{j-r}a_{j-r+1}\cdots a_{j+m-1}$ and of $b_0,b_1,\cdots, b_r$, which are fixed. The number of words $a_{j-r}a_{j-r+1}\cdots a_{j+m-1}$ is exactly  $p(f,m+r)$.  By Lemma \ref{add} we obtain: 
$$p(f,m+r) < p(f,r)p(f, m)=M p(f,m),$$
where $M=p(f,r)$. More precisely, we may bound up $M$ by $q^r$, since this is  the number of  all possible  words of length $r$ over an alphabet of $q$ letters. 
 \end{proof}

\begin{prop}\label{ration} Let  $r(T)\in \F_q(T)$  and $f(T)=\sum_{n\geq -n_0}{a_n T^{-n}}\in \F_q((T^{-1}))$. Then for every $m\in \mathbb{N}$, there is a positive constant $M$, depending only on $r$ and $n_0$,  such that:
$$p(rf,m)\leq M p(f,m).$$
\end{prop}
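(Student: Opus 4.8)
The plan is to reduce the statement to the case of division by a polynomial and then to exploit the linear recurrence satisfied by the coefficients of $f/b$. Write $r(T)=a(T)/b(T)$ with $a,b\in\F_q[T]$ and $b(T)=b_0T^s+b_1T^{s-1}+\cdots+b_s$, $b_0\neq0$, $s=\deg b$. Since $rf=a\cdot(f/b)$ and Proposition \ref{poly} already bounds the effect of multiplication by the polynomial $a$ (namely $p(a\,h,m)\le q^{\deg a}\,p(h,m)$ for any $h\in\F_q((T^{-1}))$), it suffices to produce a constant $M'$, depending only on $s$ and $n_0$, with $p(f/b,m)\le M'\,p(f,m)$; the proposition then follows with $M=q^{\deg a}M'$.

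To control $f/b$, I would first split $f=f_-+f_+$, where $f_-=\sum_{n=-n_0}^{-1}a_nT^{-n}$ is the polynomial part and $f_+=\sum_{n\ge0}a_nT^{-n}$, so that $f/b=f_-/b+f_+/b$. The summand $f_-/b$ lies in $\F_q(T)$, hence by the theorem of Morse and Hedlund its coefficient sequence is eventually periodic and its complexity is bounded by a constant $C$ depending only on $\deg f_-\le n_0$ and $s$. By Proposition \ref{addition}, $p(f/b,m)\le p(f_-/b,m)\,p(f_+/b,m)\le C\,p(f_+/b,m)$, so it remains to bound $p(f_+/b,m)$ by a constant times $p(f,m)$, noting that $f_+$ has the same nonnegative-index coefficients as $f$.

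For this last step I would write $f_+/b=\sum_{n\ge0}d_nT^{-n}$ (its valuation is at least $s\ge0$, so it has no polynomial part) and read off, from the identity $b\cdot(f_+/b)=f_+$, the recurrence obtained by comparing coefficients of $T^{-n}$:
\[
b_0d_{n+s}+b_1d_{n+s-1}+\cdots+b_sd_n=a_n,\qquad n\ge0,
\]
equivalently $d_{n+s}=b_0^{-1}\bigl(a_n-\sum_{k=1}^{s}b_kd_{n+s-k}\bigr)$. The combinatorial heart is the observation that this recurrence reconstructs any factor $d_jd_{j+1}\cdots d_{j+m-1}$ (with $m>s$ and $j\ge0$) from only two pieces of data: the window $(d_j,\dots,d_{j+s-1})\in\F_q^{\,s}$ and the factor $a_ja_{j+1}\cdots a_{j+m-1-s}$ of $f$. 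Indeed, feeding these into the recurrence produces $d_{j+s},\dots,d_{j+m-1}$ successively, and all indices invoked stay nonnegative. Hence the number of factors of length $m$ of $f_+/b$ is at most $q^{\,s}\,p(f,m-s)\le q^{\,s}\,p(f,m)$ by the monotonicity in Lemma \ref{add}, while for $m\le s$ one has trivially $p(f_+/b,m)\le q^m\le q^{\,s}$. Combining the three estimates yields $p(rf,m)\le q^{\deg a}\,C\,q^{\,s}\,p(f,m)$, as claimed.

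I expect the reconstruction claim of the last paragraph to be the main point to write out carefully, since one must verify that every index called upon by the recurrence remains within the nonnegative range recorded by the words $(a_n)_{n\ge0}$ and $(d_n)_{n\ge0}$; the remaining bookkeeping (the eventual periodicity constant $C$ and the small-$m$ correction) is routine, and it is precisely through $\deg f_-\le n_0$ that the constant acquires its dependence on $n_0$.
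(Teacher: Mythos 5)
Your proof is correct, but it takes a genuinely different route from the paper's. The paper never divides by the denominator of $r$: it instead uses the eventual periodicity (preperiod $S$, period $L$) of the coefficient sequence of $r$ to expand $r(T)=\frac{p_1(T)}{T^{S-1}}+\frac{p_2(T)}{T^{S+L-1}}\left(1+T^{-L}+T^{-2L}+\cdots\right)$, and its key computation is that the coefficients $c_j$ of $f(T)(1+T^{-L}+T^{-2L}+\cdots)$ are the partial sums $c_{kL+l}=a_l+a_{l+L}+\cdots+a_{kL+l}$, so that a factor of $(c_j)$ of length $m=kL$ is determined by its leading block of length $L$ together with a factor of $\a$ of length $m-L$; the polynomial factors $p_1,p_2$ are then absorbed as in Proposition \ref{poly}, and a final step treats $m$ not a multiple of $L$ by submultiplicativity. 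You replace all of this by the order-$s$ linear recurrence $b_0d_{n+s}+b_1d_{n+s-1}+\cdots+b_sd_n=a_n$ coming from $b\cdot(f_+/b)=f_+$; the counting principle is the same (a bounded seed plus a factor of $\a$ determines the whole window), but your seed is an $s$-window of $(d_n)$ rather than an $L$-block of partial sums. Your indexing is sound: computing $d_{j+s},\ldots,d_{j+m-1}$ calls exactly $a_j,\ldots,a_{j+m-1-s}$, all with nonnegative indices, and your treatment of small $m$ and of $f_-/b$ is legitimate (the boundedness of $p(f_-/b,m)$ is the same eventual-periodicity fact for rational series that the paper invokes for $r$ in its Step 1). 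What your route buys: no case distinction on $m$ modulo $L$, and a constant of order $q^{\deg a+2s}$, whereas the paper's constant $q^{S+2L-2}$ involves the period $L$ of the coefficient sequence of $r$, which can be as large as $q^{\deg b}-1$. What the paper's route buys is that it works only with multiplication by polynomials and geometric series, never needing the quotient series $f/b$ or its recurrence.
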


\begin{proof}

Let $f(T):=\sum_{i\geq -i_0}{a_iT^{-i}}\in \F_q((T^{-1}))$, $i_0\in \N$ and  $m\in \N$. By Proposition \ref{addition}, we have:
$$p(r(T)f(T),m) \leq  p\left(r(T)(\sum_{i= -i_0}^{-1}{a_iT^{-i}}),m\right) \cdot p\left(r(T)(\sum_{i\geq 0}{a_iT^{-i}}),m\right).$$   
 Proposition \ref{poly} implies that $$p\left(r(T)(\sum_{i= -i_0}^{-1}{a_iT^{-i}}),m\right) \leq R$$ where $R$ does not depend on $m$.  Thus, we may assume that $f(T)=\sum_{i\geq  0}{a_iT^{-i}}$.\\

The proof of  Proposition \ref{ration} is decomposed into five steps.\\

\textbf{Step 1.} Since $r(T) \in \F_q(T)$, the sequence of coefficients of $r$  is eventually periodic. Thus, there exist two positive integers $S$ and $L$ and two polynomials $p_1\in \F_q[T]$ (with degree equal to $S-1$) et $p_2\in \F_q[T]$ (with degree  equal to $L-1$) such that  $r$ may be written as follows:
$$r(T)=\frac{P(T)}{Q(T)}=\frac{p_1(T)}{T^{S-1}}+\frac{p_2(T)}{T^{S+L-1}} (1+T^{-L}+T^{-2L}+\cdots).$$
    
  Hence  
\begin{equation}\label{eq5}\begin{split}r(T)f(T)&=\underbrace{\frac{1}{T^{S-1}}p_1(T)f(T)}_{g(T)}+\underbrace{p_2(T)\frac{1}{T^{S+L-1}}f(T)(1+T^{-L}+T^{-2L}\cdots)}_{h(T)}\\&:=\sum_{n\geq 0 }{f_nT^{-n}}.\end{split}\end{equation}
 
 Let us denote by $\textbf{d}=(d(n))_{n\geq 0}$ the sequence of coefficients of $g(T)$ and by $\textbf{e}=(e_n)_{n\geq 0}$ the sequence of coefficients of $h(T)$. Clearly $\textbf f:=(f_n)_{n\geq 0}$ is such that $f_n=d_n+e_n$, for every $n\in \N$.\\

 Fix $m\in \N$. Our aim is to bound from above $p(\textbf{f},m)$. First,  assume that $m$ is a multiple of $L$. More precisely, we set $m=kL$, where $k\in \N$. \\

 In order to bound the  complexity of $\textbf{f}$, we will consider separately  the sequences $\textbf e$ and $\textbf d$.
 
\bigskip

\textbf{Step 2.} We study now the sequence $\textbf e$, defined in (\ref{eq5}). 

In order to describe the sequence $\textbf e$, we shall study first the  product $$f(T)(1+T^{-L}+T^{-2L}+\cdots)=(\sum_{i\geq 0}{a_iT^{-i}})(1+T^{-L}+T^{-2L}+\cdots):=\sum_{j\geq 0}{c_j T^{-j}}.$$

Expanding this product, it is not difficult to see that:
 $c_l=a_l$ if $l<L$ and $c_{k L+l}=a_l+a_{l+L}+\cdots + a_{k L+l}$, for $k\geq 1$ and $0\leq l \leq L-1$.\\

By definition of $c_n$,  $n\in \mathbb{N}$, we can easily obtain:
$$c_{n+L}-c_n=a_{n+L}.$$
Consequently, for all  $s\in \mathbb{N}$:
\begin{equation}\label{eq2}c_{n+s L}-c_n=a_{n+sL}+a_{n+(s-1)L}+\cdots+a_{n+L}.\end{equation}

Our goal  is now to study the subwords of $\textbf c$  with length $m=k L$.

 Let $j\geq 0$ and let $c_j c_{j+1}c_{j+2}\cdots c_{j+k L-1}$ be a finite factor of length $m=kL$. Using identity (\ref{eq2}), we may split the factor above in $k$ words of length $L$ as follows:  
 \begin{equation}  \nonumber \begin{split} c_jc_{j+1}c_{j+2}\cdots c_{j+k L-1}=& \underbrace{c_j c_{j+1}\cdots c_{j+L-1}}_{D_1}\underbrace{c_{j+L}c_{j+L+1}\cdots c_{j+2L-1}}_{D_2}\cdots \\ & ...\underbrace{c_{j+(k-1)L}c_{j+(k-1)L+1}\cdots c_{j+kL-1}}_{D_k}\end{split}\end{equation}
 where the words  $D_i$, $2\leq i \leq k$ depend only on $D_1$ and $\a$. More precisely, we have:
 \begin{align*}
D_2 &=(c_j+a_{j+L})(c_{j+1}+a_{j+L+1})\cdots (c_{j+L-1}+a_{j+2L-1})\\
%D_3 & =(c_j+a_{j+L}+a_{j+2L})(c_{j+1}+a_{j+L+1}+a_{j+2L+1})\cdots\\   & \hspace{0.5cm} (c_{j+L-1}+a_{j+2L-1}+a_{j+3L-1})\\
\vdots & \\
D_k&=(c_j+a_{j+L}+\cdots+a_{j+(k-1)L})(c_{j+1}+a_{j+L+1}+\cdots+a_{j+(k-1)L+1})\cdots\\ 
  & \hspace{0.5cm}(c_{j+L-1}+a_{j+2L-1}+\cdots+a_{j+kL-1}).\end{align*}

 Consequently,  the word $ c_j c_{j+1}c_{j+2}\cdots c_{j+m-1}$ depends only on $D_1$, which is a factor of   length $L$, determined by $r(T)$, and on the word $a_{j+L}\cdots a_{j+k L-1}$, factor of length $k L-L=m-L$ of $\a$.\\

 Now, let us return to the sequence $\textbf e$. We recall that \begin{equation}\label{eq4}\sum_{n\geq 0}{e_n T^{-n}}=\frac{p_2(T)}{T^{S+L-1}}\sum_{j \geq 0}{c_j T^{-j}}.\end{equation}
Using a similar argument as in the proof of Proposition \ref{poly} and using the identity (\ref{eq4}), a factor of the form $e_j e_{j+1}\cdots e_{j+m-1}$, $j\in \N$, depends only on the coefficients of $p_2$, which are fixed, and on   $c_{j-L+1}\cdots c_{j-1}c_j\cdots c_{j+m-1}$. Hence, the number of distinct factors of the form $e_j e_{j+1}\cdots e_{j+m-1}$ depends   only on the number of distinct factors of the  form $a_{j+1}a_{j+2}\cdots a_{j+(k-1)L}$ and  on the number of factors of length $L$ that occur in $\textbf c$.\\

\textbf{ Step 3.} We describe now the sequence $\textbf d$, defined in (\ref{eq5}).

Doing the same proof as for Proposition \ref{poly}, we obtain that the number of words $d_j \cdots d_{j+m-1}$, when $j\in \N$, depends only on the coefficients of $p_1$, which are fixed, and on the number of distinct factors $a_{j-S+1}\cdots a_{j}\cdots a_{j+m-1}$.\\

\textbf{Step 4.} We now give an upper bound for the  complexity of $\textbf f$, when $m$ is  a multiple of $L$.

According to steps 2 and 3, the number of distinct factors of the form $f_j f_{j+1}\cdots f_{j+m-1}$, $j\in \N$, depends on the number of distinct factors of form  $a_{j-S+1}a_{j+2}\cdots a_{j+m-1}$ and on the number of factors of length $L$ that occur in $\textbf c$.
 
Consequently, 
 $$p(r f, m)\leq  p(f,m+S-1)q^L, $$
and by Lemma \ref{add} 
$$p(f,m+S-1)\leq p(f,m)p(f,S-1) \leq q^{S-1} p(f,m).$$

Finally, $$p(r f,m)\leq q^{L+S-1}p(f,m).$$

\textbf{Step 5.} We now give an upper bound for the complexity of $\textbf f$, when $m$ is not a multiple of $L$.

In this case,  let us suppose that $m=kL+l$, $1\leq l \leq L-1$. Using Lemma \ref{add} and according to Step 4: \begin{eqnarray*} p(rf, m)&= &p(rf, kL+l)\leq p(rf,kL)p(rf,l)\leq p(rf,kL) p(rf,L-1)\\ & \leq & q^{L-1} p(rf,kL) \leq q^{S+2L-2}p(f, m).\end{eqnarray*}
 \end{proof}

\subsubsection{A  criterion for linear independence of Laurent series}

As a consequence of Theorem \ref{vector space}, we  give a  criterion of linear independence over $\F_q(T)$ for two Laurent series  in function of their complexity. 
\begin{prop}\label{indep1}
Let $f, g\in \F_q((T^{-1}))$ be two irrational Laurent series such that:
$$\lim_{m\rightarrow \infty} \frac{p(f,m)}{p(g,m)}=\infty.$$ Then $f$ and $g$  are linearly independent over the field  $\F_q(T)$.
\end{prop}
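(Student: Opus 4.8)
The plan is to argue by contraposition: I will show that if $f$ and $g$ are linearly \emph{dependent} over $\F_q(T)$, then the ratio $p(f,m)/p(g,m)$ stays bounded, which contradicts the hypothesis that it tends to infinity.

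First I would unwind the definition of linear dependence. Suppose there exist $a,b\in\F_q(T)$, not both zero, with $af+bg=0$. The irrationality hypotheses then force both coefficients to be nonzero: if $a=0$ then $bg=0$ with $b\neq0$, whence $g=0\in\F_q(T)$, contradicting that $g$ is irrational; symmetrically $b\neq0$. Consequently I may write $f=sg$, where $s:=-b/a\in\F_q(T)$ is a (nonzero) rational function.

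The crucial step is then a direct appeal to Proposition \ref{ration}. Applying it to the rational function $s$ and the series $g$ yields a constant $M$, depending only on $s$ and the order of $g$ (and not on $m$), such that
\[
p(f,m)=p(sg,m)\le M\,p(g,m)\qquad\text{for every }m\in\N .
\]
Hence $p(f,m)/p(g,m)\le M$ for all $m$, so the ratio cannot diverge. This contradicts $\lim_{m\to\infty}p(f,m)/p(g,m)=+\infty$, and therefore $f$ and $g$ must be linearly independent over $\F_q(T)$.

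Since the analytic content is entirely packaged in Proposition \ref{ration}, the argument itself is short and there is no serious obstacle. The only point requiring care is the degenerate case analysis: it is precisely the irrationality of $f$ and $g$ that guarantees that a nontrivial relation $af+bg=0$ has both $a$ and $b$ nonzero, so that one series really is a rational multiple of the other and Proposition \ref{ration} can be invoked.
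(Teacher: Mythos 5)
Your proof is correct and follows essentially the same route as the paper: assume a nontrivial relation, use irrationality to rule out degenerate coefficients, write $f$ as a rational multiple of $g$, and invoke Proposition \ref{ration} to bound $p(f,m)\leq M\,p(g,m)$, contradicting the divergence of the ratio. The only cosmetic difference is that the paper allows an extra polynomial term $C(T)$ in the relation (thus also invoking Proposition \ref{addition}), whereas your version, matching the statement literally, needs only Proposition \ref{ration}.
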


\begin{proof} We argue by contradiction. Assume there exist polynomials $A(T)$, $B(T)$, $C(T)$ over $ \F_q$, not all zeros, such that:
$$A(T)f(T)+B(T)g(T)+C(T)=0.$$
Next use the fact that addition with a rational function and multiplication by  a rational function  do not increase the  asymptotic order  of complexity. Indeed, since $A(T)\neq 0$  because $g(T) \notin \F_q(T)$,  we would have $$f(T)+\frac{C(T)}{A(T)}=-\frac{B(T)}{A(T)}g(T).$$ 
However,  Propositions \ref{addition} and \ref{ration} would imply that the complexity of the left-hand side of this inequality is asymptotically larger than the one of the right-hand side. 
\end{proof}

\medskip

Let us now  give an example of two Laurent  series linearly independent over  $\F_q(T)$. Their sequences of coefficients are generated by non-uniform morphisms and  we study their subword complexity in function of the order of growth of letters, using a classical result  of Pansiot \cite{Pansiot}. Notice that,  the  following  sequences are non-automatic and hence, the associated Laurent series are transcendental over $\F_q(T)$.

\begin{ex} Consider the infinite word $\a=000100010001110\cdots$;  $\a=(a_n)_{n\geq 0}=\sigma^{\infty}(0)$ where $\sigma(0)=0001$ and $\sigma(1)=11$. If  we look to the   order of growth of $0$ and $1$ we have that $\left|\sigma^n(0)\right|=3^n+5\cdot 2^{n-2}$ and $\left|\sigma^n(1)\right|=2^n$. Hence, the morphism  $\sigma$ is exponentially diverging (see the Section (\ref{morphisms})). Consequently, by Pansiot's theorem mentioned above, $p(\a,m)=\Theta(m \log m)$.

\medskip

Consider next $\b=010110101111010\cdots$; $\b=(b_n)_{n\geq 0}=\phi^{\infty}(0)$, where $\phi(0)=0101$ and $\phi(1)=11$.  It is not difficult to see that $\phi$ is polynomially diverging (see Section (\ref{morphisms}))  since $\left|\phi^n(0)\right|=(n+1)2^n$ and $\left|\phi(1)^n\right|=2^n$. By Pansiot's theorem, $p(\b, m)=\Theta(m \log \log m)$.

\medskip

Now  we consider the formal series whose coefficients are the sequences generated by the morphisms above:
$$f(T)=\sum_{n\geq 0}{a_n T^{-n}}=\frac{1}{T^3}+\frac{1}{T^7}+\frac{1}{T^{11}}+\frac{1}{T^{12}}+\cdots \in \F_q[[T^{-1}]]$$ and 
$$g(T)=\sum_{n\geq 0}{b_nT^{-n}}=\frac{1}{T^1}+\frac{1}{T^3}+\frac{1}{T^4}+\frac{1}{T^6}+\cdots  \in \F_q[[T^{-1}]].$$
Since $\lim_{m\rightarrow \infty}p(f,m)/p(g,m)=+\infty$, Proposition \ref{indep1} implies that $f$ and $g$ are linearly independent over $\F_q(T)$.
\end{ex}

%More generally, using the theorem \ref{indep1} and the Pansiot theorem, we may obtain some classes of formal power series associated with different growing morphisms, which are  linear independent over $\F_q(T)$. % if, again, we consider words over finite alphabet of $q$ letters.

\subsection{Other closure properties}
In this section we  prove that the classes $\mathcal{P}$ and $\mathcal{Z}$ are closed  under a number of actions such as:  Hadamard product, formal derivative and Cartier operator. 

\subsubsection{Hadamard product}
 Let $f(T):=\sum_{n\geq -n_1}{a_nT^{-n}}$,  $g(T):=\sum_{n\geq -n_2}{b_nT^{-n}}$ be two Laurent series in $\F_q((T^{-1}))$.
The Hadamard product of $f$ and $g$ is defined as follows:
$$f\odot g=\sum_{n\geq -\min(n_1,n_2)}{a_n b_n T^{-n}}.$$

As in the case of addition of two Laurent series (see Proposition \ref{addition}) one can easily obtain the following.
\begin{prop}\label{hadamard} Let $f$ and $g$ be two Laurent  series belonging to  $\F_q((T^{-1}))$. Then, for every  $m\in \mathbb{N}$, we have:
$$ \frac{p(f,m)}{p(g,m)}\leq p(f\odot g, m) \leq p(f,m) p(g,m).$$
\end{prop}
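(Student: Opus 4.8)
The plan is to mirror exactly the proof of Proposition \ref{addition}, replacing termwise addition by termwise multiplication, and invoking Remark \ref{autre}-style reasoning only for the upper bound, while handling the lower bound separately since the Hadamard product lacks an additive inverse. Let me sketch each inequality.

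For the upper bound, I would argue as follows. Write $f(T)=\sum_{n\geq 0}a_nT^{-n}$ and $g(T)=\sum_{n\geq 0}b_nT^{-n}$ (the finitely many negative-index terms only shift things by a bounded amount and do not affect the asymptotics, exactly as in the reduction at the start of the proof of Proposition \ref{addition}). The sequence of coefficients of $f\odot g$ is $\textbf{c}=(a_nb_n)_{n\geq 0}$, obtained by \emph{termwise multiplication} of the coefficient sequences of $f$ and $g$. Hence any length-$m$ factor of $\textbf{c}$ is determined by the length-$m$ factor of $\a$ and the length-$m$ factor of $\b$ occurring at the same position: if $\mathcal{L}_{f,m}=\{U_1,\dots,U_{p(f,m)}\}$ and $\mathcal{L}_{g,m}=\{V_1,\dots,V_{p(g,m)}\}$ are the factor sets, then
$$\mathcal{L}_{f\odot g,m}\subseteq\{U_i\odot V_j,\ 1\leq i\leq p(f,m),\ 1\leq j\leq p(g,m)\},$$
where $U\odot V$ denotes the termwise product of two words of equal length. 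Counting gives $p(f\odot g,m)\leq p(f,m)p(g,m)$ immediately.

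For the lower bound I would reuse the Dirichlet (pigeonhole) argument verbatim from Proposition \ref{addition}. Assuming without loss of generality that $p(f,m)\geq p(g,m)$ (otherwise the inequality is trivial), list the $p(f,m)$ distinct factors $U_1,\dots,U_{p(f,m)}$ of $\a$ at their occurrence positions $i_1,\dots,i_{p(f,m)}$. Reading $\b$ at these same positions yields $p(f,m)$ factors among the at most $p(g,m)$ possibilities, so by pigeonhole some fixed factor $W$ of $\b$ occurs at positions where at least $\lceil p(f,m)/p(g,m)\rceil$ \emph{distinct} $U_i$'s sit. Then the factors $U_i\odot W$ of $\textbf{c}$, taken over these positions, furnish at least $\lceil p(f,m)/p(g,m)\rceil$ factors, giving $p(f\odot g,m)\geq p(f,m)/p(g,m)$.

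The one genuine subtlety — and the main point where the proof differs from the additive case — is the last implication $U_i\neq U_j\implies U_i\odot W\neq U_j\odot W$. In the additive setting this is immediate because adding a fixed word is injective (it has an inverse over the group $\F_q$). For the Hadamard product this step \emph{fails in general}: if a coordinate of $W$ is $0$, the corresponding coordinate of $U_i\odot W$ is wiped out, so distinct $U_i$'s can collapse. I would therefore need to argue that one may choose $W$ to have no zero coordinate, or otherwise repair the count; this is the step I expect to require the most care, and it is presumably why the authors write ``one can easily obtain'' rather than giving full details — suggesting they either restrict attention to the upper bound for the applications, or implicitly assume the generic case where $W$ is invertible coordinatewise. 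In a complete write-up I would flag this and either restrict to the regime where such a $W$ exists (e.g. sequences avoiding the letter $0$, or working up to the bounded correction already tolerated by the $\Theta$-notation used throughout) or simply state the upper bound, which is all that the closure arguments for $\mathcal{P}$ and $\mathcal{Z}$ actually need.
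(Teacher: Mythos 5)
Your upper bound argument is exactly the one the paper intends (the paper's ``proof'' of Proposition \ref{hadamard} consists of the single remark that it is similar to Proposition \ref{addition}, with details left to the reader): a length-$m$ factor of the coefficient sequence of $f\odot g$ is the coordinatewise product of the factors of the coefficient sequences of $f$ and of $g$ occurring at the same position, whence $p(f\odot g,m)\leq p(f,m)\,p(g,m)$. That half of your proposal is complete and correct.

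The subtlety you flag in the lower bound is not merely a difficulty in carrying over the pigeonhole argument: the lower bound as stated in the proposition is actually false in general, so no choice of $W$ can save the count without an extra hypothesis. Take $f$ with a Sturmian coefficient sequence over $\{0,1\}\subset\F_q$, so that $p(f,m)=m+1$, and take $g=1$, whose coefficient sequence is $100\cdots$, so that $p(g,m)=2$ for $m\geq 1$. Then the coefficient sequence of $f\odot g$ is $a_0\,0\,0\cdots$, hence $p(f\odot g,m)\leq 2$, while the claimed lower bound would force $p(f\odot g,m)\geq (m+1)/2$, a contradiction for $m\geq 4$ (taking $g=0$ gives an even more degenerate counterexample). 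The failure is exactly where you locate it: coordinatewise multiplication by a fixed word $W$ containing the letter $0$ is not injective, and, unlike the situation in Remark \ref{autre}, there is no Hadamard inverse allowing one to deduce the lower bound from the upper one. Your proposed fallbacks are the right ones: the lower bound does hold when the coefficient sequence of $g$ avoids the letter $0$ (then every letter of $W$ is invertible in $\F_q$, so $U\mapsto U\odot W$ is injective and the pigeonhole argument of Proposition \ref{addition} goes through verbatim), and in any case only the upper bound is used in the rest of the paper (for instance in the proposition on formal derivatives), so the closure of $\mathcal{P}$ and $\mathcal{Z}$ under $\odot$ is unaffected.
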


The proof is similar to the one of Proposition \ref{addition}. The details are left to the reader.

\subsubsection{Formal derivative}

As an easy application of Proposition \ref{hadamard}, we present here the  following result. First, let us recall the definition of the formal derivative.

  \begin{definition}Let  $n_0 \in \N$ and consider the Laurent series: $f(T)=\sum_{n=-n_0}^{+\infty}{a_nT^{-n}} \in \F_q((T^{-1})).$
  The formal derivative of $f$ is defined as follows: $$f'(T)=\sum_{n=-n_0}^{+\infty}{(-n\text{ mod } p)a_n T^{-n+1}} \in \F_q((T^{-1})).$$
\end{definition}

  We prove then the following result.
  
\begin{prop} Let $f(T)\in \F_q((T^{-1}))$ and $k$  be a positive integer. If $f^{(k)}$ is the  derivative of order $k$ of $f$, then there exists a positive constant $M$, such that, for all $m\in \N$, we have:
$$p(f^{(k)},m)\leq M \; p(f,m).$$
\end{prop}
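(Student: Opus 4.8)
The plan is to reduce the statement about the $k$-th derivative to a single application of the Hadamard product bound, Proposition \ref{hadamard}, combined with an induction on $k$. First I would observe that the formal derivative operation $f \mapsto f'$ can be written as a Hadamard product of $f$ with a fixed Laurent series whose coefficients are the reductions modulo $p$ of the exponents (up to the harmless shift in the power of $T$). Concretely, if $f(T)=\sum_{n\geq -n_0} a_n T^{-n}$, then the coefficient sequence of $f'$ is, up to an index shift, the termwise product of $(a_n)$ with the sequence $(n \bmod p)$ (sign-adjusted). Since the sequence $((-n \bmod p))_{n}$ is purely periodic of period $p$, it is the coefficient sequence of a rational function, and hence lies in $\mathcal{P}$ with $p(\cdot,m)=O(1)$.

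The key steps, in order, are as follows. I would first establish the base case $k=1$: writing $g_0(T)$ for the Laurent series whose $n$-th coefficient is $(-n \bmod p)$, the formal derivative $f'$ is obtained from $f \odot g_0$ by a shift $T^{-n}\mapsto T^{-n+1}$, which does not change the subword complexity of the coefficient sequence (shifting by one position only deletes a bounded prefix, affecting the complexity by an additive constant, cf.\ Remark \ref{addition_poly} in spirit). Applying Proposition \ref{hadamard} gives
$$p(f',m)\leq p(f,m)\,p(g_0,m).$$
Since $(-n \bmod p)_{n\geq 0}$ is periodic of period $p$, its complexity $p(g_0,m)$ is bounded by a constant $c$ independent of $m$; thus $p(f',m)\leq c\,p(f,m)$, which is the case $k=1$ with $M=c$. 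I would then induct: assuming $p(f^{(k-1)},m)\leq M_{k-1}\,p(f,m)$ for all $m$, the base case applied to $f^{(k-1)}$ yields $p(f^{(k)},m)\leq c\,p(f^{(k-1)},m)\leq c\,M_{k-1}\,p(f,m)$, so the claim holds with $M=c^k$.

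The only genuinely delicate point is the bookkeeping around the index shift $T^{-n}\mapsto T^{-n+1}$ that distinguishes the true derivative from the bare Hadamard product $f\odot g_0$: one must check that this reindexing of the coefficient sequence changes the subword complexity by at most a bounded amount (indeed it does not change it asymptotically, since it merely drops a single initial coefficient). Everything else is routine: the periodicity of $(-n \bmod p)$ guarantees the constant bound on $p(g_0,m)$, and Proposition \ref{hadamard} supplies the multiplicative inequality directly. The main obstacle, such as it is, is therefore purely notational rather than conceptual; the substantive content has already been packaged into the Hadamard product bound, which is why the result can be labelled an easy application of Proposition \ref{hadamard}.
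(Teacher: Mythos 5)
Your proof is correct and takes essentially the same route as the paper: both reduce the derivative to a Hadamard product of $f$ with a series whose coefficient sequence is periodic modulo $p$ (hence of bounded complexity) and then invoke Proposition \ref{hadamard}. The only difference is that the paper handles general $k$ in a single application, using that $b_n=(-n)(-n-1)\cdots(-n-k+1) \bmod p$ is itself periodic of period $p$, whereas you obtain the same conclusion by induction from the case $k=1$, with constant $M=c^k$; both give a bound independent of $m$.
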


\begin{proof}

The derivative of order $k$ of $f$ is almost the Hadamard product of the series by a rational function. By definition of $p(f,m)$, we may suppose that   $f(T):=\sum_{n\geq 0}{a_nT^{-n}} \in \F_q[[T^{-1}]]$. Then:
$$f^{(k)}(T)=\sum_{n\geq k}{((-n)(-n-1)\cdots (-n-k+1) a_n)  T^{-n-k}}:=T^{-k}\sum_{n\geq k}{b_n a_n T^{-n}},$$
where $b_n:= (-n)(-n-1)\cdots (-n-k+1) \mod p $.
Since $b_{n+p}=b_n$, the sequence $(b_n)_{n\geq 0}$ is periodic of period $p$. Hence, let us denote by $g(T)$ the  series whose coefficients are precisely given by $(b_n)_{n\geq 0}$. Thus there exists a positive constant $M$  such that:
$$p(g,m)\leq M.$$

By Proposition \ref{hadamard},
$$p(f^{(k)},m)\leq p(g,m)p(f,m)\leq M p(f,m),$$
which completes the proof.
\end{proof}

\subsubsection{Cartier's operators}
In the fields of positive characteristic, there is a natural operator, the so-called ``Cartier operator'' that plays an important role in many problems in algebraic geometry  and  arithmetic in positive characteristic \cite{christol_2, christol_1, derksen,sherif}. In particular, if we consider the field of Laurent series with coefficients in $\F_q$, we have the following definition.
\begin{definition}
Let $f(T)=\sum_{i\geq 0}{a_iT^{-i}}\in \F_q[[T^{-1}]]$ and  $0\leq r < q$. The Cartier operator $\Lambda_r$ is  a linear transformation  such that:
$$\Lambda_r( \sum_{i\geq 0}{a_iT^{-i}})= \sum_{i\geq 0}{a_{qi+r}T^{-i}}.$$
\end{definition}

The classes $\mathcal P$ and $\mathcal Z$ are closed under this operator. More precisely, we prove the following result.

\begin{prop} Let $f(T)\in \F_q[[T^{-1}]]$ and $0\leq r < q$. Then there is  $M$ such that, for every $m\in \mathbb N$ we have the following:
$$p(\Lambda_r(f),m) \leq q p(f,m)^q.$$
\end{prop}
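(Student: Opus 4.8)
The plan is to reduce the subword complexity of $\Lambda_r(f)$ to that of $f$ by observing that the coefficient sequence of $\Lambda_r(f)$ is obtained from that of $f$ by reading only the positions congruent to $r$ modulo $q$, and then to invoke Lemma \ref{add}. Write $f(T)=\sum_{i\geq 0}a_iT^{-i}$ with $\a=(a_i)_{i\geq 0}$, and let $\b=(b_i)_{i\geq 0}$ be the coefficient sequence of $\Lambda_r(f)$, so that $b_i=a_{qi+r}$ by definition of the Cartier operator. We have $p(\Lambda_r(f),m)=p(\b,m)$, so it suffices to bound $p(\b,m)$ in terms of $p(\a,m)$.

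First I would locate a factor of $\b$ inside $\a$. A length-$m$ factor $b_jb_{j+1}\cdots b_{j+m-1}$ equals $a_{qj+r}\,a_{q(j+1)+r}\cdots a_{q(j+m-1)+r}$, that is, it reads the letters of $\a$ at the arithmetic progression of positions $qj+r,\ qj+r+q,\ \ldots,\ qj+r+q(m-1)$. All these positions lie inside the single window of $\a$ running from position $qj+r$ to position $qj+r+q(m-1)$, whose length is exactly $q(m-1)+1$. Hence the factor of $\b$ is the image of this window under the fixed ``keep every $q$-th letter'' map, and two identical windows yield identical factors of $\b$; so every length-$m$ factor of $\b$ is determined by a length-$(q(m-1)+1)$ factor of $\a$. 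This gives the crucial inequality $p(\b,m)\leq p(\a,\,q(m-1)+1)$, since the number of distinct factors of $\b$ is at most the number of distinct windows, itself at most the total count $p(\a,\,q(m-1)+1)$.

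It then remains to feed this into Lemma \ref{add}. Using the first item of that lemma (each unit increase in length multiplies the complexity by at most $\mathrm{card}\,\mathcal A\leq q$, as $\mathcal A\subseteq\F_q$) I would bound $p(\a,\,q(m-1)+1)\leq q\,p(\a,\,q(m-1))$; using the second item $q-1$ times I would bound $p(\a,\,q(m-1))\leq p(\a,m-1)^q$; and monotonicity gives $p(\a,m-1)\leq p(\a,m)$. Chaining these yields
$$p(\Lambda_r(f),m)=p(\b,m)\leq q\,p(\a,m)^q=q\,p(f,m)^q,$$
as claimed; the factor $q$ in the statement is precisely the price of the single extra letter (the ``$+1$'') in the window length. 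There is no serious obstacle here: the only point requiring care is computing the window length $q(m-1)+1$ correctly and checking that the reduction map is well defined, namely that it is the \emph{selected} positions rather than the whole window that matter, so that distinct factors of $\b$ force distinct windows of $\a$. As an immediate consequence, if $p(f,m)=O(m^K)$ then $p(\Lambda_r(f),m)=O(m^{qK})$, so $\mathcal P$ is stable; and $h(\Lambda_r(f))\leq q\,h(f)$, so $\mathcal Z$ is stable as well.
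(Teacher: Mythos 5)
Your proof is correct and follows essentially the same route as the paper: both reduce a length-$m$ factor of $\Lambda_r(f)$ to the window $a_{qj+r}a_{qj+r+1}\cdots a_{qj+(m-1)q+1+r-1}$ of length $q(m-1)+1$ in $\a$, giving $p(\Lambda_r(f),m)\leq p(f,q(m-1)+1)$, and then apply Lemma \ref{add} to obtain $q\,p(f,m)^q$. The only (immaterial) difference is bookkeeping in how the subadditivity and cardinality bounds from Lemma \ref{add} are chained.
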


\begin{proof} %Let $f(T)=\sum_{n\geq 0}{a_n T^{-n}}$. 
Let $\a:=(a_n)_{n\geq 0}$ be the sequence of coefficients of $f$ and $m\in \N$. In order to compute  $p(\Lambda_r(f),m)$, we have to look at  factors of the form $$a_{qj+r}a_{qj+q+r}\cdots a_{qj+(m-1)q+r},$$ for all $j \in \mathbb N$. But these only depend on  factors of the form $$a_{qj+r}a_{qj+r+1}\cdots a_{qj+(m-1)q+r}.$$ Using  Lemma \ref{add}, we obtain that:
$$p(\Lambda_r(f),m)\leq p(f,  (m-1)q+1) \leq q p(f,m-1)^q  \leq q p(f,m)^q.$$
\end{proof}

\section{Cauchy product of Laurent series}

In the previous section, we  proved that $\mathcal P$ and $\mathcal Z$ are vector space over $\F_q(T)$. This raises naturally the question whether or not these classes form a ring; i.e., are they closed under the usual Cauchy product? 
There are actually some particular cases of Laurent series with low complexity whose product still belongs to $\mathcal P$.
%%b 
In this section we discuss the case of automatic Laurent series. However, we are not able to prove whether $\mathcal P$ or $\mathcal Z$ are or not rings or fields.

\subsection{Products of automatic Laurent series}
A particular case of Laurent series stable by multiplication is the class of $k$-automatic series, $k$ being a positive integer:
$$\text{Aut}_k=\{f(T)=\sum_{n\geq 0}{a_n T^{-n}}\in \F_q((T^{-n})), \, \a=(a_n)_{n\geq 0} \, \text {is }k\text{-automatic} \}.$$  

Since any $k$-automatic sequence has at most a linear complexity, $\text{Aut}_k \subset \mathcal P$. 
A theorem of Allouche and Shallit \cite{Allouche_Shallit2} states that the set $\text{Aut}_k$ is a ring.

 In particular, this implies that, if $f$ and $g$ belong to $ \text{Aut}_k$, then $p(fg, m)= O(m)$. 
 %%b 
Notice also that, in the case where $k$ is a power of $p$, the characteristic of the field $\F_q((T^{-1}))$, the result follows from Christol's theorem.

\begin{rem}However, we do not know whether or not this property is still true if we replace $\text{Aut}_k $ by $\cup_{k\geq 2} \text{Aut}_k$. More precisely, if we consider two Laurent series $f,g \in (\cup_{k\geq 2} \text{Aut}_k)$ we do not know if the product $fg$ is still in $\mathcal {P}$. The next subsection gives a particular example of two Laurent series belonging to $(\cup_{k\geq 2} \text{Aut}_k)$ and such that the product $fg$ is still in $\mathcal P$.
\end{rem}

\bigskip

\subsubsection{Some lacunary automatic Laurent series}
We consider now some particular examples of lacunary  series. More precisely, we focus on the product of series of form: $$f(T)=\sum_{n\geq 0}{T^{-d^n}}\in \F_q((T^{-1})).$$ 
It is not difficult to prove that $p(f, m)=O(m)$. The reader may  refer  to \cite{gheorghiciuc} for more general results concerning the complexity of lacunary  series. The fact that the complexity of $f$ is linear is implied also by the fact that $f \in  \text{Aut}_d$. Notice also that $f$ is transcendental over $\F_q(T)$ if $q$ is not a power of $d$. This is an easy consequence of Christol's theorem and  a theorem of Cobham \cite{Cobham1}.

In this section we will prove the following result.

\begin{theoreme}\label{lacun} Let $d$ and $e$ be two  multiplicatively independent positive integers (that is $\frac{\log d}{\log e}$ is irrational) and let $f(T)=\sum_{n\geq 0}{T^{-d^n}}$ and $g(T)=\sum_{n\geq 0}{T^{-e^n}}$ be two Laurent series in $\F_q((T^{-1}))$.  Then:
$$ p(fg,m)=O(m^4).$$
\end{theoreme}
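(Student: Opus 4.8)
The plan is to analyze the coefficient sequence of the Cauchy product directly. Writing $fg=\sum_{N\ge 0}c_NT^{-N}$, the coefficient $c_N$ equals, modulo $p$, the number of representations $N=d^n+e^k$ with $n,k\ge 0$; in particular the support of $(c_N)$ is contained in the sumset $\{d^n+e^k\}$. Since the two ``scales'' $\{d^n\}$ and $\{e^k\}$ grow geometrically, the idea is to show that inside any window of length $m$ only boundedly many powers of $d$ and boundedly many powers of $e$ can be responsible for the nonzero entries, so that the whole window is a superposition of boundedly many shifted windows of the much simpler sequences $f$ and $g$, each of which has only linear complexity.

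First I would dispose of the windows starting at a position $j<2m$: there are at most $2m$ of them, so they contribute $O(m)$ factors, absorbed in the final bound. For a window $[j,j+m-1]$ with $j\ge 2m$ I would observe that every nonzero entry $N=d^n+e^k$ in it satisfies $\max(d^n,e^k)\ge N/2\ge j/2$, so either $d^n\in[j/2,j+m-1]$ or $e^k\in[j/2,j+m-1]$. The key elementary estimate is that an interval whose endpoints have ratio at most $3$ contains at most $\lfloor\log_2 3\rfloor+1=2$ powers of any integer $\ge 2$; since $(j+m-1)/(j/2)\le 3$ when $j\ge 2m$, this yields at most two admissible exponents $n_1,n_2$ for $d$ and at most two admissible exponents $k_1,k_2$ for $e$. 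Hence every nonzero coefficient in the window comes from a representation with $n\in\{n_1,n_2\}$ or $k\in\{k_1,k_2\}$.

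Next I would encode the window by four sub-patterns: for $i\in\{1,2\}$ let $P_i$ be the length-$m$ word recording, at each offset, whether $N-d^{n_i}$ is a power of $e$, and let $Q_i$ record whether $N-e^{k_i}$ is a power of $d$. By the previous step $c_N$ is a fixed function of the four values $P_1(N),P_2(N),Q_1(N),Q_2(N)$ together with the membership of $N$ in the at most four exceptional offsets $d^{n_i}+e^{k_j}$ (where a representation would be counted twice), so the whole window of $(c_N)$ is determined by the quadruple $(P_1,P_2,Q_1,Q_2)$ up to $O(1)$ extra data. The crucial reduction is that $P_i$ is exactly a length-$m$ window of the indicator sequence of the powers of $e$, shifted by $d^{n_i}$, hence one of the $p(g,m)$ distinct factors of $g$; likewise each $Q_i$ is one of the $p(f,m)$ factors of $f$. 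Since $p(f,m)=O(m)$ and $p(g,m)=O(m)$ (both sequences being $d$- resp.\ $e$-automatic, as already noted), the number of admissible quadruples is at most $p(g,m)^2\,p(f,m)^2=O(m^4)$, and therefore $p(fg,m)=O(m^4)$.

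The main obstacle I expect is the bookkeeping around coincidences: a single position may admit several representations $d^n+e^k$, so I must verify that reconstructing $c_N$ from the four sub-patterns (with an inclusion--exclusion correction at the at most four offsets where $n\in\{n_1,n_2\}$ and $k\in\{k_1,k_2\}$ simultaneously) recovers the correct value modulo $p$. Fortunately the exceptional set is of size $O(1)$ purely because there are at most two scales in each direction, so this correction costs only a bounded factor and does not affect the exponent. I would expect the multiplicative independence of $d$ and $e$ to enter not in the upper bound itself but in ruling out degeneracies (for instance in any matching lower bound, or in guaranteeing that $fg$ is not rational), and I would keep the counting argument above independent of it, checking only that the ``ratio at most $3$'' estimate and the passage from $j\ge 2m$ to general $j$ are stated with constants uniform in the window.
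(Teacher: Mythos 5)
Your reduction of a window $[j,j+m-1]$ (for $j\ge 2m$) to the four shifted patterns $P_1,P_2,Q_1,Q_2$ is correct as far as it goes: every representation $N=d^n+e^k$ with $N$ in the window must use a power $d^{n_i}$ or $e^{k_{j'}}$ lying in $[j/2,\,j+m-1]$, your ``ratio at most $3$'' estimate correctly limits these to two choices on each side, and by inclusion--exclusion
$$c_N \equiv P_1(N)+P_2(N)+Q_1(N)+Q_2(N)-D(N) \pmod p,$$
where $D(N)$ counts the pairs $(i,j')$ with $N=d^{n_i}+e^{k_{j'}}$. The gap is your claim that $D$ costs only ``$O(1)$ extra data''. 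The correction word $D$ is supported on at most a few offsets of the window, but \emph{which} offsets is not determined by the quadruple $(P_1,P_2,Q_1,Q_2)$: the condition $P_i(N)=Q_{j'}(N)=1$ does not characterize the exceptional positions, because it can also occur at a non-exceptional $N$ through two \emph{different} representations, $N=d^{n_i}+e^a=d^b+e^{k_{j'}}$ with $e^a$ and $d^b$ small. Hence, to reconstruct the window, you must additionally specify the support of $D$, i.e.\ a few positions ranging over $\{0,\dots,m-1\}$ together with their values; this multiplies your count by a factor polynomial in $m$, and the argument as written yields a bound of the order $O(m^6)$ (or $O(m^4)$ times powers of $\log m$, with more care about locating the exceptional $1$'s inside the $P_i$), not the stated $O(m^4)$. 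It still proves $fg\in\mathcal P$, but not the theorem as formulated.

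This is also exactly where your closing remark --- that multiplicative independence should play no role in the upper bound --- goes wrong. The paper's proof rests on an $S$-unit lemma, valid precisely because $d$ and $e$ are multiplicatively independent: for all sufficiently large $N$ there is \emph{at most one} representation $N=d^n+e^k$. That uniqueness is what kills your correction term: for $j$ large, $c_N$ is then simply the pointwise maximum (logical OR) of $P_1(N),P_2(N),Q_1(N),Q_2(N)$, the window is genuinely determined by the quadruple, and your count $O(m)^4=O(m^4)$ goes through (after also allowing the $O(m)$ zero-padded patterns that arise when $j-d^{n_i}<0$). With that one ingredient added, your argument becomes a correct and in fact shorter alternative to the paper's proof, which instead uses the same $S$-unit lemma to split $fg$ into two indicator series according to whether $d^n>e^k$ or $d^n<e^k$, shows each has complexity $O(m^2)$ by a block-structure analysis analogous to the $\Pi_q$ sections, and concludes by the product inequality of Proposition \ref{addition}.
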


\begin{rem} The series $f$ and $g$ are linearly independent over $\F_q(T)$. More generally, any two irrational Laurent series, $d$-automatic and respectively $e$-automatic, where $d$ and $e$ are two multiplicatively independent positive integers, are linearly independent over $\F_q(T)$. This follows by a Cobham's theorem.
\end{rem}

Let us denote by $h(T):=f(T)g(T)$. Then $h(T)=\sum_{n\geq 0}{a_n T^{-n}}$ where the sequence $\a=(a_n)_{n\geq 0}$ is defined as follows: \begin{equation}\nonumber a_n:=(\text{the number of pairs }(k,l)\in \N^2 \text{ that verify }n=d^k+e^l) \mod p.\end{equation}

The main clue of the proof is the following consequence of the theory of $S$-unit equations (see \cite{Adamczewski_Bell} for a  proof).
\begin{lemma} Let $d$ and $e$ be two  multiplicatively independent positive integers. There is a finite number of solutions $(k_1,k_2,l_1,l_2)\in \N^4$, $k_1\neq k_2$, $l_1\neq l_2$, that satisfy the equation: $$d^{k_1}+e^{l_1}=d^{k_2}+e^{l_2}.$$
\end{lemma}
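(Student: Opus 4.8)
The plan is to recognize the equation as a four-term $S$-unit equation and to invoke the finiteness theorem for such equations, the genuinely hard analytic input being quoted from the theory of $S$-unit equations (hence from \cite{Adamczewski_Bell}). Let $S$ be the finite set of prime numbers dividing $d\cdot e$. Each of the four quantities $d^{k_1}, e^{l_1}, d^{k_2}, e^{l_2}$ is then a positive $S$-unit, and the equation $d^{k_1}+e^{l_1}=d^{k_2}+e^{l_2}$ rewrites as the vanishing sum
$$d^{k_1}+e^{l_1}-d^{k_2}-e^{l_2}=0$$
of four $S$-units with fixed coefficients $(1,1,-1,-1)$. The $S$-unit equation theorem asserts that such an equation has only finitely many \emph{non-degenerate} solutions up to a common $S$-unit scaling, where non-degenerate means that no proper nonempty subsum vanishes. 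So the strategy splits into two tasks: first, to show that every admissible solution (one with $k_1\neq k_2$ and $l_1\neq l_2$) is non-degenerate; second, to show that each projective (up-to-scaling) class accounts for at most one tuple $(k_1,k_2,l_1,l_2)$.

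First I would rule out degenerate solutions by inspecting all proper vanishing subsums. A single term can never vanish, and among the two-term subsums the pairs $\{d^{k_1},e^{l_1}\}$ and $\{-d^{k_2},-e^{l_2}\}$ are sums of terms of the same sign, hence cannot vanish. The pair $\{d^{k_1},-d^{k_2}\}$ vanishes only if $k_1=k_2$, and $\{e^{l_1},-e^{l_2}\}$ only if $l_1=l_2$; both are excluded by hypothesis. The remaining mixed pairs $\{d^{k_1},-e^{l_2}\}$ and $\{e^{l_1},-d^{k_2}\}$ force an equality $d^{a}=e^{b}$, which by multiplicative independence of $d$ and $e$ (no nontrivial relation $d^{a}=e^{b}$ with $(a,b)\neq(0,0)$) forces the exponents to be zero; together with the complementary subsum this collapses to the excluded case $k_1=k_2$. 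Hence no admissible solution has a vanishing proper subsum, so the $S$-unit equation theorem yields only finitely many projective classes of admissible solutions.

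Then I would recover the tuple from its projective class. A class fixes all ratios of the four $S$-units; in particular it fixes $d^{k_1}/e^{l_1}$ and $d^{k_2}/e^{l_2}$. If two pairs $(k_1,l_1)$ and $(k_1',l_1')$ gave the same ratio $d^{k_1}/e^{l_1}=d^{k_1'}/e^{l_1'}$, then $d^{k_1-k_1'}=e^{l_1-l_1'}$, and multiplicative independence forces $k_1=k_1'$ and $l_1=l_1'$; likewise for $(k_2,l_2)$. Thus each class yields at most one admissible tuple, and combining this with the $S$-unit finiteness theorem gives finitely many admissible tuples, as claimed. The main obstacle is not the elementary bookkeeping above but the deep finiteness theorem for $S$-unit equations, which I would simply cite; the only subtlety in the reduction is to use multiplicative independence correctly, both to exclude degenerate subsums and to collapse each projective class to a single tuple.
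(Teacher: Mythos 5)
Your proposal is correct and follows exactly the route the paper intends: the paper gives no proof of its own but cites \cite{Adamczewski_Bell}, where the lemma is established by precisely this reduction to the finiteness theorem for $S$-unit equations. Your bookkeeping is sound --- non-degeneracy via signs and multiplicative independence, and recovery of each tuple from its projective class via the ratios $d^{k_i}/e^{l_i}$ --- and the only case you leave implicit (a vanishing three-term subsum) is immediate, since its complementary single term would then have to vanish.
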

Obviously, we have the following  consequence concerning the sequence $\a=(a_n)_{n\geq 0}$:

\begin{coro} \label{sunit} There exists a positive integer $N$ such that, for every $n\geq N$ we have $a_n \in \left\{ 0,1 \right\}$. Moreover, 
$a_n=1$  if and only if there exists one unique pair $(k,l)\in \N^2$ such that $n=d^k+e^l$.
\end{coro}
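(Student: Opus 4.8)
The plan is to work with the integer counting function rather than its reduction modulo $p$: for $n\in\N$, let $r(n)$ denote the number of pairs $(k,l)\in\N^2$ with $n=d^k+e^l$, so that by definition $a_n=r(n)\bmod p$. The whole statement will follow once I show that the set of integers $n$ with $r(n)\geq 2$ is finite. Granting this, I fix $N$ to be larger than every element of that finite set; then for every $n\geq N$ one has $r(n)\in\{0,1\}$, whence $a_n=r(n)\bmod p\in\{0,1\}$, and moreover $a_n=1$ holds exactly when $r(n)=1$, that is, exactly when a single (unique) pair $(k,l)$ represents $n$. This yields both assertions of the corollary simultaneously.

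To bound the integers $n$ with $r(n)\geq 2$, I would argue as follows. Suppose $r(n)\geq 2$ and pick two distinct pairs $(k_1,l_1)\neq(k_2,l_2)$ with $d^{k_1}+e^{l_1}=n=d^{k_2}+e^{l_2}$. Since $d$ and $e$ are multiplicatively independent, both are at least $2$, so the maps $x\mapsto d^x$ and $x\mapsto e^x$ are injective on $\N$. If $k_1=k_2$, then $e^{l_1}=e^{l_2}$ forces $l_1=l_2$, contradicting distinctness of the pairs; symmetrically, $l_1=l_2$ forces $k_1=k_2$, again a contradiction. Hence necessarily $k_1\neq k_2$ and $l_1\neq l_2$, so that $(k_1,k_2,l_1,l_2)$ is precisely a solution of the equation $d^{k_1}+e^{l_1}=d^{k_2}+e^{l_2}$ of the type controlled by the preceding lemma.

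The preceding lemma guarantees that there are only finitely many such quadruples $(k_1,k_2,l_1,l_2)$. Each of them determines one value $n=d^{k_1}+e^{l_1}$, so the set $\{\,n:\,r(n)\geq 2\,\}$ is contained in a finite set of integers and is therefore finite, as required. Choosing $N$ strictly above the maximum of this finite set (and taking $N=1$ when the set is empty) completes the argument.

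I do not expect any genuine obstacle here: the corollary is essentially a direct translation of the lemma into the language of the sequence $\a$. The only point deserving care is the reduction to the lemma's hypotheses $k_1\neq k_2$ and $l_1\neq l_2$, which is why I isolate and dispose of the degenerate cases $k_1=k_2$ and $l_1=l_2$ using injectivity of the exponential maps; this in turn relies on multiplicative independence forcing $d,e\geq 2$. A secondary subtlety worth flagging explicitly is the passage from $r(n)\in\{0,1\}$ to $a_n\in\{0,1\}$, which uses $p\geq 2$ so that $1\bmod p=1$; this is automatic since $p$ is prime.
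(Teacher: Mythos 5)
Your proof is correct and matches the paper's approach: the paper states this corollary as an immediate (``obvious'') consequence of the $S$-unit lemma, and you have simply written out the details, including the worthwhile observation that the degenerate cases $k_1=k_2$ or $l_1=l_2$ reduce to equality of pairs by injectivity of $x\mapsto d^x$ and $x\mapsto e^x$. Nothing is missing.
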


We prove now  the Theorem \ref{lacun}. For the sake of simplicity, we consider  $d=2$ and $e=3$, but the proof is exactly the same in the general case.

\begin{proof}

Let $\b:=(b_n)_{n\geq 2}$ and $ \textbf{c}:=(c_n)_{n\geq 2}$ be the sequences defined as follows:
 \begin{displaymath}
b_n = \left\{ \begin{array}{ll}
1 & \text{ if there exists a pair $(k,l)\in \N^2$ such that }  n=2^k+3^l, \, 2^k > 3^l;\\
0 & \text{otherwise},
\end{array} \right.
\end{displaymath}

\begin{displaymath}
c_n = \left\{ \begin{array}{ll}
1 & \text{ if there exists a pair $(k,l)\in \N^2$ such that }  n=2^k+3^l, \, 2^k  < 3^l;\\
0 & \text{otherwise}.
\end{array} \right.
\end{displaymath}

Let us denote by $h_1(T):=\sum_{n\geq 2}{b_n T^{-n}}$ and resp. $h_2(T):=\sum_{n\geq 2}{c_n T^{-n}}$ the series associated to $\b$ and $\textbf {c}$.
Using Corollary \ref{sunit}, there exists a polynomial $P\in \F_q[T]$, with degree less than $N$, such that $h$ can be written as follows:   $$h(T)=h_1(T)+h_2(T)+P(T).$$ 
By Remark \ref{addition_poly}, there is  $C\in \mathbb R$ such that, for any $m\in \N$:
$$p(h,m)\leq p(h_1 +h_2, m)+C.$$
In the sequel, we will  show that $p(h_1,m)=p(h_2,m)=O(m^2)$.  Theorem \ref{lacun} will then follow  by  Proposition \ref{addition}.

 We study now the subword complexity of the sequence of coefficients $\b:=(b_n)_{n\geq 2}$. 
The proof is  similar to the proofs of Theorems \ref{case2} and \ref{caseq}. The  complexity of  the sequence $\textbf c$ can be treated in essentially the same way as for $\b$.

\medskip

 \textbf{Step 1.} For all $n\geq 1$, we denote by  $W_n$ the factor of $\b$ that occurs between positions $2^n+1$ and $2^{n+1}$, that is:  
$$W_n:=b_{2^n+3^0}b_{2^n+2}b_{2^n+3^1}\cdots b_{2^{n+1}}.$$ We also set $W_0:=1$.

Observe that $\left|W_n\right|=2^n$. 

With these notations the infinite word $\b$ can be factorized as:
       \begin{equation}\label{conq}\b=\underbrace{1}_{W_0}\underbrace{10}_{W_1}\underbrace{1  0 10 }_{W_2}\underbrace{10100000}_{W_3}\cdots=W_0W_1W_2\cdots.\end{equation}
       
\medskip
       
 \textbf{Step 2.} Let $n \geq 1$ and  $m_n$ be the greatest  integer such that $ 2^n+3^{m_n} \leq 2^{n+1}$. This is equivalently to say that $m_n$ is such that $$2^n+3^{m_n} < 2^{n+1} < 2^{n}+3^{m_n+1}.$$ Notice also that  $m_n= n\lfloor \log_3 2\rfloor$.

  With these notations we have (for $n\geq 5$): $$W_n= 1010^51\cdots 10^{\alpha_i}\cdots 10^{\alpha_{m_n}}10^{\beta_n},$$ where $\alpha_i=2\cdot 3^{i-1}-1$, for $1\leq i \leq m_n$, and $\beta_n= 2^n-3^{m_n} \geq 0$. \\
  
   Let us denote by $U_n$ the prefix of $W_n$ such that $W_n:=U_n0^{\beta_n}$.
   
   Notice that $(m_n)_{n\geq 0}$ is an increasing sequence. Hence $(\alpha_{m_n})_{n\geq 0}$ is increasing. Consequently,  $U_n \prec_p U_{n+1}$ and more generally, $U_n \prec_p W_i$, for every $i\geq n+1$.
   
\medskip
   
\textbf{ Step 3.}  Let $M \in \N$. Our aim is to bound from above the number of distinct factors of length $M$ occurring in $\b$. In order to do this, we will show   that there exists an integer $N$ such that  all these factors occur in $$W_0W_1\cdots W_N $$ or in the set  
%%B 
$$A_{0}:=\{Z \in \mathcal{A}^M; \, Z \text{ is of the form } 0^jP \text{ or } 0^i10^jP, \, P \prec_p U_{N},\, i, j \geq 0, \}.$$
 \medskip
   
  Let $N=\lceil  \log_2(M+1) \rceil+3$. Doing a simple computation   we obtain that $\alpha_{m_N}\geq M$. Notice also that, for any $i \geq N$ we have $$\alpha_{m_i}\geq M.$$ This follows since  $(\alpha_{m_n})_{n\geq 0}$ is an  increasing  sequence.
  
  \medskip
  
  Let $V$ be a factor of length $M$ of $\b$. Suppose that $V$ does not occur in the prefix $W_0 W_1 \cdots W_{N}$. Then, by (\ref{conq}), $V$ must occur in $W_{N}W_{N+1}\cdots$. Hence, $V$ must appear in  some $W_i$, for $i \geq N+1$,
   or in $ \bigcup_{i\geq {N}} i(W_i, \varepsilon , W_{i+1})$.
   
   \medskip
   
   Let us suppose that $V$ occurs in $ \bigcup_{i\geq {N}} i(W_i, \varepsilon , W_{i+1})$.
  Since $W_i$ ends with $0^{\alpha_{m_i}}10^{\beta_i}$, with $\alpha_{m_i}\geq M$, and since $W_{i+1}$ begins with $U_{N}$ and $\left| U_{N}\right| =3^{m_{N}}+1\geq M$, we have that $$\mathcal {A}^M \cap (\bigcup_{i\geq {N}} i(W_i,\varepsilon, W_{i+1})) \subset A_0.$$

 Hence, if $V$ occurs in $ \bigcup_{i\geq {N}} i(W_i, \varepsilon , W_{i+1})$ then $V \in A_0$.
 
 \medskip

 Let us suppose now that $V$ occurs in some $W_i$, for $i \geq N+1$.
  By definition of $W_i$ and $\alpha_i$, for  $i \geq N+1$ and by the fact that  we have:  
  $$W_i= 1010^51\cdots 10^{\alpha_{m_N}}10^{\alpha_{m_N+1}}\cdots10^{\alpha_{m_i}} 10^{\beta_i}=U_{N}0^{\alpha_{m_N+1}}\cdots 10^{\alpha_{m_i}}10^{\beta_i}.$$ 
  
 By assumption, $V$ does not occur in  $W_0 W_1 \cdots W_{N}$; hence $V$ cannot occur in $U_N$ which by definition is a prefix of $W_N$. Consequently, $V$ must be of the form $0^r 1 0^s$, $r,s \geq 0$. Indeed, since $\alpha_{m_N}\geq M$,  all blocks of zeros that follow after $U_{N}$ (and before the last digit $1$ in  $W_i$) are all longer than $M$. But the words of form $0^r 1 0^s$, $r,s \geq 0$ belong also to $A_0$.
  \medskip

Hence, we proved that if $V$ does not occur in the prefix  $W_0 W_1 \cdots W_{N}$, then $V$ belongs to $A_{0}$,  as desired.

\medskip

\textbf{Step 4.} In the previous step we showed that all distinct factors  of length $M$ occur in the prefix $W_0W_1\cdots W_{N}$ or in the set $A_{0}$. 

 Since $$\left|W_0W_1\cdots W_{N} \right|= \sum_{i=0}^{N}{2^i}=2^{N+1}-1$$ and since $N=\lceil  \log_2(M+1) \rceil+3$  we have that:  $$2^{N+1}-1\leq 2^{\log_2(M+1)+5}-1=32M+31,$$ and the number of distinct factors that occur in $W_0W_1\cdots W_{N}$ is less or equal to
$32M+31$.

Also, by an easy  computation, we obtain that the cardinality of the set $A_0$ is $$\mathrm{Card} \,A_{0}= \frac{M^2}{2}+\frac{3M}{2}.$$

Finally, $p(\b, m)= p(h_1,m)=O(m^2)$. In the same manner, one could prove that $p(h_2,m)=O(m^2)$. This achieves the proof of Theorem \ref{lacun}, in view of Proposition \ref{addition}.
\end{proof}  
   
 \subsection{A more difficult case}
 
 Set
 $$\theta(T):= 1+2\sum_{n\geq 1}{T^{-n^2}}\in \F_q((T^{-1})), \, q\geq 3.$$
 The function $\theta(T)$ is related to the classical Jacobi theta function.
  The sequence of coefficients of $\theta(T)$ corresponds to the characteristic sequence of squares and one can easily  prove that:
 $$p(\theta, m)=\Theta(m^2).$$
 In particular this implies the transcendence of $\theta(T)$ over $\F_q(T)$, for any $q\geq 3$. Notice that this implies the transcendence over $\mathbb Q(T)$ of   the same Laurent series but viewed as an element of $\mathbb Q((T^{-1}))$.
 
 Since $\theta(T) \in \mathcal P$, it would be interesting to know whether or not $\theta^2(T)$ belongs also to $\mathcal P$. 
Notice that  $$\theta^2(T)=\sum_{n\geq 1}r_2(n)T^{-n}$$ where $r_2(n)$ is the number of representations of $n$ as sum of two squares of integers mod $p$. 

In the rich bibliography concerning Jacobi theta function (see for instance \cite{duverney, hardy}), there is the following  well-known  formula:
 
$$ r_2(n)=4( d_1(n)-d_3(n)) \text{ mod } p $$ where $d_i(n)$ denotes the number of divisors of $n$ congruent to $i$ modulo $4$, for each $i\in \{1,3\}$.

In particular, by an easy consequence of Fermat's $2$-squares theorem  we can deduce that $r_2(n)=0$ if $n$ is a prime of the form $4k+3$ and $r_2(n)=8 \mod p$ if $n$ is a prime of the form $4k+1$.
  
More generally, if 
  $$n=2^{\gamma}p_1^{\alpha_1}p_2^{\alpha_2}\cdots p_k^{\alpha_k}q_1^{\beta_1}q_2^{\beta_2}\cdots q_l^{\beta_l},$$ where  $p_i\equiv 1\;[4]$ et $q_j\equiv 3 \;[4]$  then
   \begin{displaymath}
r_2(n) = \left\{ \begin{array}{ll}
0 \text{ if there exists an odd $\beta_j$ in the decomposition of $n$};\\
4(\alpha_1
+1)(\alpha_2+1)\cdots(\alpha_k+1) \mod p \text{ if all $\beta_j$ are even}.
\end{array} \right.
\end{displaymath}

 Using these properties, we may easily deduce that  $r_2(n)$ is a multiplicative function of $n$. 
 %Or, in order to study the the subword complexity of $r_2(n)_{n\geq 0}$, it would be necessary to find some additive properties or 
 Recall that we would like to study the subword complexity of $r_2(n)_{n\geq 0}$, that is the number of distinct factors of form $r_2(j)r_2(j+1)\cdots r_2(j+m-1)$, when $j\in \N$. Hence, it would be useful to describe some additive properties of $r_2(n)_{n\geq 0}$; for instance, it would be interesting to find some relations between  $r_2(j+N)$ and $r_2(j)$, for some positive integers $j, N$. This seems to be a rather difficult question about which we are not able to say anything conclusive.

 \section{Conclusion}

  It would be also interesting to investigate  the following general question.\\
 
 \emph{Is it true that Carlitz's analogs of classical constants all have a ``low'' complexity (i.e., polynomial or subexponential)?} 
\\

 The first clue in this direction are the examples provided by Theorems \ref{algebraic}, \ref{case2} and \ref{caseq}. 
 \medskip 
Notice also that a positive answer would reinforce the  differences between $\mathbb R$ and $\F_q((T^{-1}))$ %%b 
as hinted in our introduction.  
 When investigating these problems, we need, in general, the Laurent series expansions of such functions. In this context, one has to mention  the work of Berth� \cite{Berthe,  Berthe_2, Berthe_3, Berthe_4}, where 
 %%b
 some Laurent series expansions of  Carlitz's functions are described. 

\medskip 

%%b 
When a Laurent series has a ``low'' complexity,  the combinatorial structure of its sequence of coefficients is rich and this can be used to derive  some  interesting Diophantine properties. Using this principle, bounds for irrationality measures can be obtained for elements of the class of  Laurent series with at most linear complexity.

\section*{Acknowledgements}
I would like to express my gratitude to my advisor Boris Adamczewski for his very valuable comments, suggestions and corrections during the preparation of this article.
I also thank 
%%b 
Jean-Paul Allouche for sending me his unpublished note concerning the problem studied in Section 3,  and  Julien Cassaigne for suggesting  the idea used in Remark \ref{saturate_eq}.

\end{document}